\documentclass{article}
\usepackage{MnSymbol}
\usepackage{mathrsfs} 
\usepackage{amsmath,amsthm}
\usepackage[all]{xy}
\usepackage{hhline}
\usepackage{supertabular}

\newtheorem{theorem}{Theorem}[section]
\newtheorem{corollary}[theorem]{Corollary}
\newtheorem{lemma}[theorem]{Lemma}
\newtheorem{proposition}[theorem]{Proposition}
\theoremstyle{definition}
\newtheorem{remark}[theorem]{Remark}
\newtheorem{definition}[theorem]{Definition}

\numberwithin{equation}{section}

\DeclareMathAlphabet\mathbb{U}{msb}{m}{n}

\newcommand*\uline[1]{
  \vbox{                       
       \hbox{\kern0em
      \ifmmode#1\else\ensuremath{#1}\fi%
      \kern0em                        
    }\kern-0.2ex%
    \hrule height 0.5pt
    \kern-0.3ex
    }}    

\sloppy
\begin{document}

\begin{center}
{\bf \Large Stable Calabi--Yau dimension of  self-injective algebras of finite type}

\vspace{30pt}

S. O. IVANOV and Y. V. VOLKOV \let\thefootnote\relax\footnote{The authors were supported by RFBR (grants no. 13-01-00902 A; 10-01-00635) and by SPbSU (NIR 6.38.74.2011). The first author acknowledges the support of RFBR (grant no. 12-01-31100 mol\_a) and ``Rokhlin grant''. }

\vspace{30pt}

\begin{minipage}{300pt} \footnotesize
ABSTRACT. We give an equivalent definition of the stable Calabi--Yau dimension in terms of bimodule syzygies and so-called stably inner automorphisms. Using it, we complete the computation of the stable Calabi--Yau dimensions of the self-injective algebras of finite representation type which was started by K.~Erdmann, A.~Skowro\'nski, J.~Bia\l kowski and A.~Dugas.
\end{minipage}
\end{center}

\vspace{20pt}

\section*{Introduction.}

Let $k$ be a field and $\mathcal{T}$ be a $k$-linear  hom-finite triangulated category with the shift functor  $\Sigma$. The category $\mathcal{T}$ is called a weakly $n$-Calabi--Yau category if there is a natural isomorphism
\begin{equation}\label{CY-def}{\rm Hom}_{\mathcal{T}}(y,\Sigma^nx)\cong D {\rm Hom}_{\mathcal{T}}(x,y),\end{equation}
 where $D={\rm Hom}_k(-,k).$ The weak Calabi--Yau dimension of  $\mathcal{T}$ is the least number  $n\geq 0$ such that $\mathcal{T}$ is a weakly $n$-Calabi--Yau category. This notion was introduced by M.~Kontsevich \cite{Konts}. It allows to interpret the Calabi--Yau property of a variety $X$ in terms of the derived category   $\mathcal{D}^b({\rm coh}(X)).$  In  \cite{Keller} B.~Keller introduced a modified definition of an $n$-Calabi--Yau category where the isomorphism (\ref{CY-def}) is compatible in a tricky way with the structure of a triangulated category. We call such a category strong $n$-Calabi--Yau category. In this article we deal only with weakly $n$-Calabi--Yau categories. 

In \cite{Erd_Skow} K.~Erdmann and A.~Skowro\'nski  introduced the notion of stable Calabi--Yau dimension of a self-injective algebra. It is just the (weak) Calabi--Yau dimension of the stable module category ${\rm \underline{mod}\text{-}} A.$

In \cite{Asashiba} H.~Asashiba classified all self-injective  algebras of finite representation type over algebraically closed field modulo derived equivalence. He proved that standard self-injective algebras of finite representation type are determined up to derived equivalence by their type $(T, f, t)$, where $T$ is the tree class, $f$ is the frequency and $t$ is the torsion order. Moreover, it was proved that nonstandard algebras of finite representation type occur only over a field of characteristic 2, and in this case there is only one series of such algebras up to derived equivalence. The following types occur in the case of standard self-injective algebra:
\begin{enumerate}
\item $(A_n, \frac{r}{n}, 1)$, $n\geq 1$, $r\geq 1$;
\item $(A_{2n+1}, r, 2)$, $n\geq 1$, $r\geq 1$;
\item  $(D_n, r, 1)$, $n\geq 4$, $r\geq 1$;
\item $(D_{3n}, \frac{r}{3}, 1)$, $n\geq 2$, $r\geq 1$;
\item $(D_n, r, 2)$, $n\geq 4$, $r\geq 1$;
\item $(D_4, r, 3)$, $r\geq 1$;
\item $(E_n, r, 1)$, $6\leq n\leq 8$, $r\geq 1$;
\item $(E_6, r, 2)$, $r\geq 1$;
\end{enumerate}
Only the following types occur in the case of  nonstandard algebras:
\begin{enumerate}
\setcounter{enumi}{8}
\item $(D_{3n}, \frac{1}{3}, 1)$, $n\geq 2$.
\end{enumerate}

The stable Calabi--Yau dimension was computed by K.~Erdmann, A.~Skowro\'nski, J.~Bia\l kowski and A.~Dugas in \cite{Bialkowski_Skowronski}, \cite{Erd_Skow}, \cite{Dugas} for all standard algebras except the algebras from the case 2 for even $n$ and the case 5 for even $r$. All these results are collected in  \cite{Dugas}.

The goal of this article is to reformulate the notion of stable Calabi--Yau dimension in terms of bimodule syzygies $\Omega^{n+1}_{A^e}(A)$, to complete the calculations for the remaining cases of standard algebras, and to compute the stable Calabi--Yau dimension of nonstandard algebras.

The structure of the paper is the following. In the beginning of the first section we give a definition of  {\it stably inner automorphism} of a self-injective algebra $A$. Roughly speaking, an automorphism is stably inner if it acts trivially on the stable module category.  The set of such automorphisms is denoted by  $\underline{\rm Inn}(A)$. It is a normal subgroup of the automorphism group of $A$ and it contains all inner automorphisms  ${\rm Inn}(A)\triangleleft \underline{\rm Inn}(A)\triangleleft {\rm Aut}(A)$. 
Further, we prove the following theorem.

\vspace{10pt}

\noindent{\bf Theorem \ref{thm_scydim}.}{\it\  Let  $A$ be a self-injective algebra without semisimple blocks. Then the following conditions are equivalent.
\begin{enumerate}
\item ${\rm \underline{mod}\text{-}} A$ is a weakly $n$-Calabi--Yau category.
\item $\Omega^{n+1}_{A^e}(A)\cong (A^{\vee})_\varphi$ for some $\varphi\in \underline{\rm Inn}(A).$ 
\item $\Omega^{-n-1}_{A^e}(A)\cong D(A)_\varphi$ for some $\varphi\in \underline{\rm Inn}(A).$
\end{enumerate}}
\noindent The second section is devoted to study of stably inner automorphisms. In particular, we prove the following property.

\ \\
{\bf Corollary  \ref{cor_st_in}.} {\it \ Let $A$ be a self-injective algebra without semisimple blocks, let $I$ be a two-sided ideal of $A$ and $\varphi\in\underline{\rm Inn}(A).$ Then  $\varphi(I)=I,$ and  $\varphi$ induces an inner automorphism of the quotient algebra $A/((0:I)+I).$}

\vspace{10pt}

\noindent Here we denote by $(0:I)$ the left annihilator of the ideal $I.$ The third section is devoted to the calculation of the stable Calabi--Yau dimension for the remaining cases of self-injective algebras of finite representation type: standard algebras of types $(A_{2n+1},r,2)$ for even $n$, $(D_n,r,2)$ for even $r$ and nonstandart algebras of type $(D_{3n},\frac{1}{3},1)$. The results of all the calculations are collected in Table 1. In this table we denote the greatest common divisor of numbers $a$ and $b$ by $(a,b)$ and by  $p$ the characteristic of the ground field $k.$

\begin{center}
Table 1:
\end{center}
\begin{supertabular}{|p{100pt}|p{70pt}|p{130pt}|}
\hline
Algebra type & Conditions & stable Calabi-Yau dimension\\
\hline
\hline
$(\Delta,\frac{r}{m_{\Delta}},1)$&
$(\frac{m_{\Delta}+1}{2},r)\ne 1$&$\infty$
\\
\hhline{~--}
$\Delta\in\{A_1,D_{2n},E_{7},E_{8}\}$ & $ 
 (\frac{m_{\Delta}+1}{2},r)=1,$  & $l$, where $0<l\leq r$ \\
 & $2\mid   r$ or $\ p=2$ & and $r\mid   (l-1)\frac{m_{\Delta}+1}{2}+1$
 \\
\hhline{~--}
&$(\frac{m_{\Delta}+1}{2},r)=1$,
 &
 $1+2l$, where $0\leq l< r$ 
 \\ & $2\nmid r$ and $p\ne 2$ &  and
 $r\mid  l(m_{\Delta}+1)+1$
  \\
\hline
$(\Delta,\frac{r}{m_{\Delta}},1),$&$(m_{\Delta}+1,r)\ne 1$&$\infty$\\
\hhline{~--}
$\Delta\not\in\{A_1,D_{2n},E_{7},E_8\}$&$(m_{\Delta}+1,r)=1$& 
 $1+2l$, where $0\leq l< r$ 
 \\ &  & and
 $r\mid  l(m_{\Delta}+1)+1$
\\
\hline
$(A_{2n+1},r,2)$, $n\geq 1$&$(r+n+1,2r)\ne1$&$\infty$\\
\hhline{~--}
&$(r+n+1,2r)=1$& $l(2n+1)-1$, where  $0< l\leq 2r$
 \\ &  & and $2r\mid  l(r+n+1)-1$
 \\
\hline
$(D_{n},r,2)$, $2\nmid  r$&$(n-1,r)\ne1$&$\infty$\\
\hhline{~--}
&$(n-1,r)=1$& $2l$, where $0<l< r(2n-3)$ 
 \\
& &  and $r(2n-3)\mid  l(2n-2)-(n-2)$
 \\
\hline
$(D_{n},r,2)$, $2\mid  r$& $(n-1,r)\ne1$ &$\infty$\\&or $p\ne 2$ &  
 \\
\hhline{~--}
& $(n-1,r)=1$ & $l(2n-3)-1$, where $0< l\leq 2r$ 
 \\ &and $p=2$ & and $2r\mid  l(n-1)-1$
 \\
\hline
$(D_4,r,3)$&---&$\infty$\\
\hline
$(D_{3n},\frac{1}{3},1), $&---&$4n-3$\\nonstandard & &  \\
\hline
$(E_6,r,2)$&$(6,r)\ne1$&$\infty$\\
\hhline{~--}
&$(6,r)=1$& $2l$, where $0< l< 11r$ 
\\& & and $11r\mid   12l-5 $
\\
\hline
\end{supertabular}

\section*{Acknowledgements.}

We wish to express our sincere gratitude to A. I. Generalov, A.O.~Zvonareva, A.A.~Ivanov, N.A.~Vavilov and A.~Dugas for instructive discussions and helpful remarks.

\section{Stable Calabi-Yau dimension and stably inner automorphisms.}

We fix a ground field $k$. Throughout this paper, all algebras will be unital, associative, finite-dimensional self-injective $k$-algebras, and all modules and bimodules will be finitely generated. Unless otherwise stated, modules are assumed to be right modules. For an algebra $A$ we denote by ${\rm mod\text{-}} A$ the category of finitely generated right $A$-modules, by $A{\text{-}\rm mod}$ the category of  finitely generated left $A$-modules, and by ${\rm bimod}\text{-} A$ the category of  finitely generated $A$-bimodules.

\subsection{Stably inner automorphisms.}

For an algebra homomorphism $\varphi : A\to B$ we denote by ${\rm res}_{\varphi}:{\rm mod\text{-}} B\to {\rm mod\text{-}} A$ the restriction functor, i.e. a functor $M\mapsto M_{\varphi}$  replacing the structure of $B$-module by the structure of $A$-module by the formula $m*a=m\varphi(a).$ It is clear that 
${\rm res}_\varphi\cong -\otimes_B B_\varphi.$ 

Let us remind that an automorphism $\varphi$ of an algebra  $A$ is called inner if there exists $a\in A$ such that  $\varphi(x)=a^{-1}xa$ for any $x\in A.$ A group of all inner automorphisms of $A$ is denoted by ${\rm Inn}(A).$ It is a normal subgroup of the group ${\rm Aut}(A)$ of all automorphisms.  The following proposition is proved in  \cite[Part I, Chapter II, proposition 5.3]{Bass}

\begin{proposition}  Let $\varphi$ be an automorphism of an algebra $A.$ The automorphism  $\varphi$ is inner if and only if ${\rm res}_\varphi\cong {\rm Id}.$
\end{proposition}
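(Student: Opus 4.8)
The plan is to prove the two implications directly, using the regular representation $A_A$ as the key test object for the harder direction.

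For the forward implication, suppose $\varphi(x)=u^{-1}xu$ for a unit $u\in A$. For each right module $M$ I would define $\eta_M\colon M\to M_\varphi$ by $\eta_M(m)=mu$. Since $u$ is a unit this is a bijection, and it is $A$-linear into $M_\varphi$ because $\eta_M(ma)=(ma)u=(mu)(u^{-1}au)=\eta_M(m)\varphi(a)=\eta_M(m)*a$. Naturality is then immediate: for an $A$-linear map $f\colon M\to N$ one has $f(mu)=f(m)u$, which is exactly the naturality square since ${\rm res}_\varphi$ leaves the underlying map of $f$ unchanged. Hence ${\rm res}_\varphi\cong{\rm Id}$.

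For the converse, suppose $\eta\colon{\rm Id}\to{\rm res}_\varphi$ is a natural isomorphism. I would evaluate it on $A_A$ and set $u:=\eta_A(1)$. Right $A$-linearity of $\eta_A\colon A\to A_\varphi$ gives $\eta_A(x)=\eta_A(1)*x=u\varphi(x)$ for all $x$. The step that does the real work is to feed naturality the left-multiplication endomorphisms $\lambda_a\colon A\to A$, $x\mapsto ax$, which are morphisms in ${\rm mod\text{-}}A$. Since ${\rm res}_\varphi(\lambda_a)=\lambda_a$ as a map of sets, the naturality square reads $\eta_A(ax)=a\,\eta_A(x)$; comparing with $\eta_A(x)=u\varphi(x)$ yields $u\varphi(a)\varphi(x)=au\varphi(x)$ for all $a,x$, and taking $x=1$ gives $u\varphi(a)=au$.

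It remains to check that $u$ is a unit, after which $u\varphi(a)=au$ rewrites as $\varphi(a)=u^{-1}au$, so $\varphi$ is inner. This holds because $\eta_A$ equals left multiplication by $u$ composed with $\varphi$; as $\eta_A$ and $\varphi$ are bijections, left multiplication by $u$ is surjective and injective on $A$. Surjectivity gives a right inverse $v$ with $uv=1$, and injectivity (left cancellation) applied to $u(vu)=u=u\cdot 1$ gives $vu=1$. I expect the only genuinely nontrivial point to be this converse, and within it the idea of probing the natural isomorphism with the left-multiplication maps, which is what converts the a priori one-sided datum (right linearity of $\eta_A$) into the two-sided relation $u\varphi(a)=au$; the invertibility of $u$ is then a short cancellation argument.
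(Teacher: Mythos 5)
Your proof is correct and complete: both directions check, including the two key steps in the converse --- probing naturality with the left-multiplication maps $\lambda_a$ (which are indeed morphisms in ${\rm mod\text{-}} A$, and are unchanged by ${\rm res}_\varphi$ as set maps), and the cancellation argument showing $u=\eta_A(1)$ is a two-sided unit, so that $u\varphi(a)=au$ rewrites as $\varphi(a)=u^{-1}au$. The paper gives no proof of its own but cites Bass [Part I, Ch.~II, Prop.~5.3], and your argument is exactly the standard one, so there is nothing to reconcile.
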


Let $A$ be a self-injective algebra and let $M,N$ be  $A$-modules. We denote by $\mathcal{P}(M,N)$ the subset 
of ${\rm Hom}_A(M,N)$ consisting of all homomorphisms passing through a projective module $M\to P\to N.$  The sets $\mathcal{P}(M,N)$ form an ideal of the category  ${\rm mod\text{-}} A.$ The stable module category ${\rm \underline{mod}\text{-}} A$ is a quotient category of ${\rm mod\text{-}} A/\mathcal{P}.$ It is a triangulated category with a shift functor given by Heller's cosyzygy functor $\Omega^{-1}.$ The set of morphisms from a module  $M$  to a module $N$ in the category ${\rm \underline{mod}\text{-}} A$ is denoted by  $\underline{\rm Hom}_A(M,N).$ It is clear  that if we fix an epimorphism from a projective module $\sigma: P\twoheadrightarrow N,$ a homomorphism lies in  $\mathcal{P}(M,N)$ if and only if it can be presented in the form $M\to P \overset{\sigma}{\twoheadrightarrow} N.$ 

For a morphism  $f$ in the category ${\rm mod\text{-}} A,$ we denote by $\underline{f}$ the corresponding morphism in the category ${\rm \underline{mod}\text{-}} A$. It is well-known that for  modules $M$ and $N$ without projective summands a homomorphism $f:M\to N$ is an isomorphism if and only if $\underline{f}:M\to N$ is an isomorphism in the category ${\rm \underline{mod}\text{-}} A.$

Let  $F:{\rm mod\text{-}} A \to {\rm mod\text{-}} B$ be a functor sending projectives to projectives. Then it induces a functor between stable categories which we denote by  $\underline{F}: {\rm \underline{mod}\text{-}} A\to {\rm \underline{mod}\text{-}} B$. 

\begin{definition}
An automorphism $\varphi$ of a self-injective algebra $A$ is called  {\it stably inner} if $\underline{\rm res}_\varphi \cong {\rm Id}_{{\rm \underline{mod}\text{-}} A}$. 
\end{definition} 

We denote by $\underline{\rm Inn}(A)$ a set of all stably inner automorphisms of an algebra $A.$ It is clear that  $\underline{\rm Inn}(A)$ is a normal subgroup of ${\rm Aut}(A).$

We stated the definition using right modules, but it is not essential because the following diagram is commutative
$$\xymatrix{
{\rm \underline{mod}\text{-}} A\ar[rr]^{\underline{D}} \ar[d]^{\underline{\rm res}_\varphi} & & A {\rm\text{-} \underline{mod}}\ar[d]^{{}_\varphi\underline{\rm res}} \\
{\rm \underline{mod}\text{-}} A\ar[rr]^{\underline{D}} & & A {\rm\text{-} \underline{mod}
} },
$$
where $D={\rm Hom}_{ k}(-, k).$

\subsection{Nakayama functor.}

For an algebra $A,$ we consider a functor
$$(-)^t={\rm Hom}_A(-,A):{\rm mod\text{-}} A \to A\text{-}{\rm mod}.$$
Recall that the functor $$\nu=D((-)^t):{\rm mod\text{-}} A\to {\rm mod\text{-}} A$$ is called Nakayama functor. It is an autoequivalence if and only if the algebra $A$ is self-injective, and it is isomorphic to the identity functor if and only if $A$ is a symmetric algebra.

Now we construct a contravariant endofunctor $(-)^{\vee}$  on the category of $A$-bimodules. One can consider an $A$-bimodule as a right module over the enveloping algebra $A^e=A^{\rm op}\otimes A$, and as a left module over $A^e.$ Therefore, we have isomorphisms of the categories  ${\rm bimod}\text{-} A\cong {\rm mod\text{-}} A^e$ and ${\rm bimod}\text{-} A \cong A^e\text{-}{\rm mod}.$ If we compose these isomorphisms with the functor $(-)^t:{\rm mod\text{-}} A^e \to A^e\text{-}{\rm mod},$ we obtain the functor 
$$(-)^{\vee}={\rm Hom}_{A^e}(-,A\otimes A):{\rm bimod}\text{-} A\to {\rm bimod}\text{-} A.$$

In other worlds, the space $A\otimes A$ is equipped by two commuting  $A$-bimodule structures, the {\it outer structure}: $$a\cdot (x_1\otimes x_2)\cdot b= (x_1\otimes x_2)(a\otimes b)= ax_1\otimes x_2b,$$ and the {\it inner} structure:
$$a\star (x_1\otimes x_2)\star b=(b\otimes a)(x_1\otimes x_2)=x_1b\otimes ax_2.$$ 
When we consider $A$-bimodule homomorphisms  $f:M\to A\otimes A$, we mean the outer bimodule structure on $A\otimes A$, but when we endow the space ${\rm Hom}_{A^e}(M,A\otimes A)$ by an  bimodule structure we use the inner bimodule structure   on  $A\otimes A.$

The first of the following isomorphisms is well-known and the second was proved in \cite{I_nak_funct_eilenberg_watts}.

\begin{proposition} \label{prop_nakayama} The following natural isomorphisms hold
 $$\nu\cong -\otimes_A D(A), \hspace{30pt} \nu^{-1} \cong -\otimes_A A^{\vee}.$$
\end{proposition}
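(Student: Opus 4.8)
The plan is to treat the two isomorphisms separately: the first is a direct Eilenberg--Watts computation, and the second follows essentially formally once $A^{\vee}$ is identified as the $\otimes_A$-inverse of $D(A)$.

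For $\nu\cong -\otimes_A D(A)$ I would write down the obvious comparison map. For a right module $M$ define $\eta_M\colon M\otimes_A D(A)\to D\,{\rm Hom}_A(M,A)=\nu(M)$ by $\eta_M(m\otimes f)(\varphi)=f(\varphi(m))$ for $m\in M$, $f\in D(A)$ and $\varphi\in{\rm Hom}_A(M,A)$. One checks routinely that $\eta_M$ is well defined, $A$-bilinear and natural in $M$. For $M=A$ both sides are canonically $D(A)$ and $\eta_A$ becomes the identity. Now $-\otimes_A D(A)$ is right exact, and $D\,{\rm Hom}_A(-,A)$ is right exact as well, being the composite of the contravariant left-exact functor ${\rm Hom}_A(-,A)$ with the exact contravariant duality $D$; both functors are additive. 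Hence $\eta$ is an isomorphism on finitely generated free modules, then on their summands, i.e.\ on all finitely generated projectives, and finally, by choosing a presentation $A^{\oplus s}\to A^{\oplus t}\to M\to 0$ and applying the five lemma, on every finitely generated $M$.

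For $\nu^{-1}\cong -\otimes_A A^{\vee}$ I would first reduce everything to a statement about bimodules. Since $A$ is self-injective, $\nu$ is an autoequivalence, so it suffices to exhibit a two-sided $\otimes_A$-inverse of the bimodule $D(A)$ and to identify it with $A^{\vee}$: concretely, it is enough to prove the bimodule isomorphisms $A^{\vee}\otimes_A D(A)\cong A$ and $D(A)\otimes_A A^{\vee}\cong A$, because associativity of $\otimes_A$ then yields natural isomorphisms $(M\otimes_A A^{\vee})\otimes_A D(A)\cong M$ and $(M\otimes_A D(A))\otimes_A A^{\vee}\cong M$, showing that $-\otimes_A A^{\vee}$ is quasi-inverse to $\nu$ and hence isomorphic to $\nu^{-1}$. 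Alternatively, one may invoke the finitely generated form of the Eilenberg--Watts theorem: $\nu^{-1}$ is exact and additive, so $\nu^{-1}\cong -\otimes_A X$ with $X=\nu^{-1}(A)$, the left action on $X$ being $a\mapsto\nu^{-1}(\lambda_a)$ where $\lambda_a$ is left multiplication (a right-module endomorphism of $A$); it then remains to identify the bimodule $X$ with $A^{\vee}$.

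The heart of the argument, and the step I expect to be hardest, is therefore the identification of the bimodule $\nu^{-1}(A)$ with $A^{\vee}={\rm Hom}_{A^e}(A,A\otimes A)$. Using that $\nu^{-1}$ is right adjoint to $\nu=-\otimes_A D(A)$ (tensor--hom adjunction), we get $\nu^{-1}\cong{\rm Hom}_A(D(A),-)$, so $\nu^{-1}(A)\cong{\rm Hom}_A(D(A),A)$; on the other side, evaluation at $1$ identifies a bimodule map $A\to A\otimes A$ for the outer structure with an element $u=\sum_i x_i\otimes y_i$ satisfying $\sum_i ax_i\otimes y_i=\sum_i x_i\otimes y_i a$ for all $a$, the inner structure reading $a\star u\star b=\sum_i x_ib\otimes ay_i$. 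The crux is to produce a natural bimodule isomorphism ${\rm Hom}_A(D(A),A)\cong{\rm Hom}_{A^e}(A,A\otimes A)$ and, most delicately, to verify that the bimodule structure carried across is exactly the \emph{inner} one prescribed in the definition of $(-)^{\vee}$ and not the outer one. This is purely a matter of tracking the two commuting $A$-bimodule structures on $A\otimes A$ through the adjunction isomorphisms and the standard tensor--hom identities; everything else in the proof is formal or classical, but it is precisely this bookkeeping of inner versus outer structures where the genuine work lies.
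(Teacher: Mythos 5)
First, a point of comparison: the paper gives no proof of this proposition at all --- it declares the first isomorphism well-known and cites the second to the first author's paper on Nakayama functors and Eilenberg--Watts theorems. So your proposal is necessarily an independent reconstruction. Your first half is complete and correct: $\eta_M(m\otimes f)(\varphi)=f(\varphi(m))$ is balanced over $A$ (both sides give $f(\varphi(m)a)$), natural and right $A$-linear, both functors are additive and right exact (exactness of $D$ turns the left-exact contravariant ${\rm Hom}_A(-,A)$ into a right-exact covariant composite), they agree on $A$, and the presentation-plus-five-lemma argument finishes, since every finitely generated module over the finite-dimensional $A$ is finitely presented. Your reduction in the second half is also sound: since $A$ is self-injective, $\nu$ is an equivalence, its right adjoint ${\rm Hom}_A(D(A),-)$ is its quasi-inverse, and either the finitely generated Eilenberg--Watts theorem or the fact that $D(A)_A$ is finitely generated projective (injective $=$ projective over a self-injective algebra) converts this to $-\otimes_A{\rm Hom}_A(D(A),A)$. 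One caution about your first suggested branch: the isomorphisms $A^{\vee}\otimes_AD(A)\cong A$ and $D(A)\otimes_AA^{\vee}\cong A$ are exactly Corollary \ref{rem_A^vee_D(A)}, which the paper \emph{deduces from} this proposition, so invoking them without an independent proof would be circular; your adjunction route is the right one.

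The genuine gap is that you stop precisely at what you correctly call the heart: you never exhibit the bimodule isomorphism ${\rm Hom}_A(D(A),A)\cong A^{\vee}={\rm Hom}_{A^e}(A,A\otimes A)$, only assert that the bookkeeping works out --- and that identification is the entire content of the cited reference. It does go through, and here is the missing paragraph. Send $u=\sum_i x_i\otimes y_i\in A\otimes A$ to $g_u\in{\rm Hom}_k(D(A),A)$ defined by $g_u(f)=\sum_i f(x_i)\,y_i$; since $A$ is finite-dimensional, $u\mapsto g_u$ is a linear isomorphism $A\otimes A\to{\rm Hom}_k(D(A),A)$. Now $g_u(fa)=\sum_i f(ax_i)y_i$ while $g_u(f)a=\sum_i f(x_i)y_ia$, so $g_u$ is right $A$-linear if and only if $\sum_i ax_i\otimes y_i=\sum_i x_i\otimes y_ia$ for all $a$ (the converse direction uses injectivity of $u\mapsto g_u$), i.e.\ if and only if evaluation at $1$ realizes $u$ as an element of ${\rm Hom}_{A^e}(A,A\otimes A)$ for the outer structure. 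Finally, with the inner structure $a\star u\star b=\sum_i x_ib\otimes ay_i$ one computes $g_{a\star u\star b}(f)=\sum_i f(x_ib)\,ay_i=a\,g_u(bf)$, which is exactly the canonical bimodule structure $(a\cdot g\cdot b)(f)=a\,g(bf)$ on ${\rm Hom}_A(D(A),A)$ appearing in the Eilenberg--Watts bimodule $\nu^{-1}(A)$. So the inner, not the outer, structure is the one carried across, as the definition of $(-)^{\vee}$ requires. With this verification inserted, your proof is complete; without it, the one step that goes beyond formal generalities is missing.
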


An isomorphism of functors  \hbox{$-\otimes_A M$} $\cong$ \hbox{$-\otimes_A N$} includes an isomorphism of bimodules $M\cong N.$ Hence, we obtain 

\begin{corollary}\label{rem_A^vee_D(A)} The following isomorphisms of bimodules hold.
$$A^{\vee}\otimes_AD(A)\cong A, \hspace{30pt} D(A)\otimes_AA^{\vee}\cong A.$$
\end{corollary}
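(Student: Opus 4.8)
The plan is to read the two desired bimodule isomorphisms off the two composite identities $\nu\circ\nu^{-1}\cong{\rm Id}$ and $\nu^{-1}\circ\nu\cong{\rm Id}$, which are available precisely because $A$ is self-injective and hence $\nu$ is an autoequivalence with quasi-inverse $\nu^{-1}$.

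First I would rewrite each composite as a single tensor functor using Proposition \ref{prop_nakayama} together with the associativity of $-\otimes_A-$. Concretely, for the first composite,
$$\nu\circ\nu^{-1}\cong(-\otimes_A A^{\vee})\otimes_A D(A)\cong -\otimes_A(A^{\vee}\otimes_A D(A)),$$
and for the second,
$$\nu^{-1}\circ\nu\cong(-\otimes_A D(A))\otimes_A A^{\vee}\cong -\otimes_A(D(A)\otimes_A A^{\vee}).$$
Since the identity functor on ${\rm mod\text{-}} A$ is $-\otimes_A A$, comparing each composite with ${\rm Id}$ yields isomorphisms of tensor functors $-\otimes_A(A^{\vee}\otimes_A D(A))\cong-\otimes_A A$ and $-\otimes_A(D(A)\otimes_A A^{\vee})\cong-\otimes_A A$.

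Finally I would invoke the remark preceding the statement: an isomorphism of functors $-\otimes_A M\cong-\otimes_A N$ forces an isomorphism of bimodules $M\cong N$. Applying this to the two displayed functor isomorphisms gives exactly $A^{\vee}\otimes_A D(A)\cong A$ and $D(A)\otimes_A A^{\vee}\cong A$.

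The argument is essentially formal, so there is no serious obstacle; the only points requiring care are bookkeeping ones. One must match the order of composition of $\nu$ and $\nu^{-1}$ with the order of the tensor factors (which is why the two composites produce the two different orderings $A^{\vee}\otimes_A D(A)$ and $D(A)\otimes_A A^{\vee}$), and one must be sure that the concluding step recovers the full bimodule structure rather than only the underlying right-module structure. This latter subtlety is exactly what the quoted remark is designed to handle, so it is worth confirming that the remark is stated strongly enough, namely that the natural isomorphism of tensor functors is genuinely an isomorphism of bimodule-valued functors, before appealing to it.
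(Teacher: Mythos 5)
Your proposal is correct and is exactly the paper's (largely implicit) argument: the corollary is read off from $\nu\circ\nu^{-1}\cong\nu^{-1}\circ\nu\cong{\rm Id}$ via Proposition \ref{prop_nakayama} and the remark that an isomorphism of functors $-\otimes_A M\cong -\otimes_A N$ includes an isomorphism of bimodules $M\cong N$. Your final caveat is already settled in the text, since that remark is stated precisely at the bimodule level immediately before the corollary.
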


An algebra $A$ is called a Frobenius algebra if there exists a linear map \hbox{$\varepsilon:A\to  k$} such that the bilinear form $(a,b)\mapsto \varepsilon(ab)$ is nondegenerate. In this case  $\varepsilon$ is called a Frobenius form. Any Frobenius algebra is self-injective. The Nakayama automorphism of a Frobenius algebra $A$ is an automorphism $\tilde\nu :A\to A,$ defined by the equality $\varepsilon(a\cdot b)=\varepsilon(b\cdot \tilde\nu(a)),$ for any $a,b\in A.$  It is easy to check that  $D(A)\cong A_{\tilde\nu}$, and, consequently, \hbox{$\nu\cong {\rm res}_{\tilde\nu}$.} 
Using the last corollary, we get an isomorphism  $A^{\vee}\cong A_{\tilde\nu^{-1}}.$ Hence, we obtain
\begin{corollary}\label{cor_frobenious_iso} Let $A$ be a Frobenius algebra, and let $\tilde\nu$ be its Nakayama automorphism. Then the following isomorphisms of bimodules hold
$$D(A)\cong A_{\tilde\nu},\hspace{30pt}  A^{\vee}\cong A_{\tilde\nu^{-1}}.$$
\end{corollary}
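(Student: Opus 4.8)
The plan is to treat the two isomorphisms separately, deriving the first directly from the Frobenius form and bootstrapping the second from it through the Nakayama functor.

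For the isomorphism $D(A)\cong A_{\tilde\nu}$, I would use the Frobenius form to produce an explicit candidate. Define $\lambda\colon A\to D(A)$ by $\lambda(a)=\varepsilon(-\,a)$, that is, $\lambda(a)(x)=\varepsilon(xa)$. Nondegeneracy of the form $(a,b)\mapsto\varepsilon(ab)$ says precisely that $\lambda$ is injective, hence bijective by a dimension count, so $\lambda$ is a linear isomorphism. It then remains to check that $\lambda$ is a morphism of $A$-bimodules once the source carries the twisted structure $A_{\tilde\nu}$, where $D(A)$ is given the dual bimodule structure $(a\cdot f\cdot b)(x)=f(bxa)$. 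Compatibility with the left action is immediate from associativity; compatibility with the right action is where the Nakayama automorphism enters, since one needs $\varepsilon(xa\tilde\nu(b))=\varepsilon(bxa)$ for all $x$, and this is exactly the defining relation $\varepsilon(uv)=\varepsilon(v\tilde\nu(u))$ applied with $u=b$ and $v=xa$. Thus $\lambda\colon A_{\tilde\nu}\to D(A)$ is the desired bimodule isomorphism.

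For $A^{\vee}\cong A_{\tilde\nu^{-1}}$ I would argue functorially rather than build a new map. Combining $D(A)\cong A_{\tilde\nu}$ with Proposition \ref{prop_nakayama} gives $\nu\cong -\otimes_A D(A)\cong -\otimes_A A_{\tilde\nu}\cong {\rm res}_{\tilde\nu}$. Since restriction functors compose by ${\rm res}_{\psi}\circ{\rm res}_{\varphi}\cong{\rm res}_{\varphi\circ\psi}$, the functor ${\rm res}_{\tilde\nu^{-1}}$ is a quasi-inverse of ${\rm res}_{\tilde\nu}\cong\nu$, whence $\nu^{-1}\cong{\rm res}_{\tilde\nu^{-1}}\cong -\otimes_A A_{\tilde\nu^{-1}}$. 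On the other hand, Proposition \ref{prop_nakayama} also gives $\nu^{-1}\cong -\otimes_A A^{\vee}$, so comparing the two expressions and invoking the remark that an isomorphism $-\otimes_A M\cong -\otimes_A N$ forces $M\cong N$ yields $A^{\vee}\cong A_{\tilde\nu^{-1}}$. Equivalently, one can stay inside the category of bimodules: from Corollary \ref{rem_A^vee_D(A)} and $D(A)\cong A_{\tilde\nu}$ one has $A_{\tilde\nu}\otimes_A A^{\vee}\cong A$, and using the tensor rule $A_{\varphi}\otimes_A A_{\psi}\cong A_{\varphi\circ\psi}$ together with $A_{\tilde\nu^{-1}}\otimes_A A_{\tilde\nu}\cong A$ one cancels to the same conclusion.

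The individual computations are short, so the main obstacle is purely one of bookkeeping: keeping straight the variance and the side of every twist. One must fix once and for all whether $A_{\varphi}$ twists the left or the right action, how dualization swaps the two sides, and in which order restriction functors compose, because a slip in any of these flips $\tilde\nu$ into $\tilde\nu^{-1}$ and would produce the wrong statement. I would therefore pin down these conventions at the outset and read off the correct twist at each step, using the relation $\varepsilon(uv)=\varepsilon(v\tilde\nu(u))$ as the single input that fixes the direction.
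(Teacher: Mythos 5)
Your proposal is correct and follows essentially the same route as the paper: the explicit pairing $\lambda(a)(x)=\varepsilon(xa)$ is exactly the verification behind the paper's ``it is easy to check that $D(A)\cong A_{\tilde\nu}$'', and your second step --- cancelling against Corollary \ref{rem_A^vee_D(A)} (equivalently, comparing $\nu^{-1}\cong -\otimes_A A^{\vee}$ with $\nu^{-1}\cong{\rm res}_{\tilde\nu^{-1}}$ via the remark that $-\otimes_A M\cong -\otimes_A N$ forces $M\cong N$) --- is precisely how the paper deduces $A^{\vee}\cong A_{\tilde\nu^{-1}}$. Your attention to the twist conventions, in particular the rule $A_{\varphi}\otimes_A A_{\psi}\cong A_{\varphi\circ\psi}$, is exactly the bookkeeping the paper leaves implicit.
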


\subsection{Stable Calabi-Yau dimension.}

Following K. Erdmann and A. Skowro\'nski  \cite{Erd_Skow}, we 
define the stable Calabi-Yau dimension of $A$ to be the weak Calabi-Yau dimension of the stable module category ${\rm \underline{mod}\text{-}} A$.  They proved that the functor $\Omega\text{\b{$\nu$}}$ is a Serre functor of the category  ${\rm \underline{mod}\text{-}} A.$ Therefore, the category   ${\rm \underline{mod}\text{-}} A$ is a weakly $n$-Calabi-Yau category if and only if there is an isomorphism of functors $\Omega^{n+1}\cong \text{\b{$\nu$}}^{-1}.$ Thus, the stable Calabi-Yau dimension of an algebra  $A$ is the least number  $n\geq 0$ such that  $\Omega^{n+1}\cong\text{\b{$\nu$}}^{-1}.$

\begin{remark}
The category ${\rm \underline{mod}\text{-}} A$ is a weakly  $n$-Calabi-Yau category if and only if the category  $A\text{-}\underline{\rm mod}$ is a weakly  $n$-Calabi-Yau category. It follows from the commutativity of the following diagrams.
$$\xymatrix{
{\rm \underline{mod}\text{-}} A\ar[rr]^{\underline{D}}\ar[d]^{\Omega^m} & & A\text{-}\underline{\rm mod}\ar[d]^{\Omega^{-m}} \\
{\rm \underline{mod}\text{-}} A\ar[rr]^{\underline{D}} & & A\text{-}\underline{\rm mod}
}\hspace{30pt} \xymatrix{
{\rm \underline{mod}\text{-}} A\ar[rr]^{\underline{D}}\ar[d]^{\underline{\nu}} & & A\text{-}\underline{\rm mod}\ar[d]^{\underline{\nu}^{-1}} \\
{\rm \underline{mod}\text{-}} A\ar[rr]^{\underline{D}} & & A\text{-}\underline{\rm mod}
}$$ 
\end{remark}

Let $_AM_B$ be a bimodule which is left $A$-projective and right $B$-projective. Then the functor \hbox{$-\otimes_AM:$} ${\rm mod\text{-}} A\to {\rm mod\text{-}} B$ is an exact functor and maps projectives to projectives. Thus, it induces a functor between stable categories, we denote it by  $-\otimes^{\bf st}_AM:{\rm \underline{mod}\text{-}} A\to {\rm \underline{mod}\text{-}} B$. For example, we have the following isomorphisms
$$\underline{\rm res}_\varphi\cong -\otimes_A^{\bf st} A_\varphi, \hspace{10pt} \Omega^m\cong -\otimes_A^{\bf st} \Omega^m_{A^e}(A),$$
$$\underline\nu\cong -\otimes^{\bf st}_A D(A),\hspace{10pt}  \underline\nu^{-1}\cong -\otimes^{\bf st}_A A^{\vee}.$$ 

For an $A$-bimodule $M$ we denote by $M_A$ this bimodule considered as a right module, and by ${}_AM$ this bimodule considered as a left module.

\begin{lemma}\label{lem_A_f}
Let $A$ be an algebra with the radical $J$ and  $M$ be a right-left projective $A$-bimodule. Then the following conditions are equivalent. 
\begin{enumerate}
\item  $(M/JM)_A\cong (A/J)_A$ and  ${}_A(M/MJ)\cong {}_A(A/J)$.
\item  ${}_A(M/JM)\cong {}_A(A/J)$ and $(M/MJ)_A\cong (A/J)_A$.
\item $M_A\cong A_A$ and ${}_AM\cong {}_AA.$
\item $M\cong A_\varphi$ (as bimodules) for some $\varphi\in {\rm Aut}(A).$
\end{enumerate}
\end{lemma}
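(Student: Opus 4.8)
The plan is to establish the cycle $(3)\Rightarrow(4)$, $(4)\Rightarrow(1)$ and $(4)\Rightarrow(2)$, together with $(1)\Rightarrow(3)$ and $(2)\Rightarrow(3)$; since then every condition implies $(3)$ and $(3)$ implies all of them, the four are equivalent. Throughout I would use that a finitely generated projective module over $A$ is the projective cover of its top and that projective covers are unique up to isomorphism.

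\emph{The two directions out of $(4)$.} If $M\cong A_\varphi$ with $\varphi\in{\rm Aut}(A)$, then $\varphi(J)=J$ since $\varphi$ is an automorphism and $J$ is the radical. As the left action on $A_\varphi$ is untwisted and the right action is twisted by $\varphi$, a direct computation gives $JM=J=MJ$, so $M/JM=M/MJ$, and this common quotient is $(A/J)_\varphi$. As a left module it is ${}_A(A/J)$, while as a right module the twist descends to $\bar\varphi\in{\rm Aut}(A/J)$, giving $(A/J)_{\bar\varphi}\cong(A/J)_A$ exactly as in the proof that $A_\varphi\cong A_A$ as right modules. This yields $(1)$ and $(2)$ at once.

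\emph{From $(1)$ or $(2)$ to $(3)$.} Condition $(2)$ directly names the tops of $M$ on each side: $(M/MJ)_A$ is the top of the right projective $M_A$ and ${}_A(M/JM)$ is the top of the left projective ${}_AM$, so the hypotheses $(M/MJ)_A\cong(A/J)_A$ and ${}_A(M/JM)\cong{}_A(A/J)$ force $M_A\cong A_A$ and ${}_AM\cong{}_AA$ by uniqueness of projective covers. For $(1)$ the key point is that its ``crossed'' quotients are required to be semisimple: $(M/JM)_A\cong(A/J)_A$ makes the right radical $(MJ+JM)/JM$ vanish, i.e.\ $MJ\subseteq JM$, and dually ${}_A(M/MJ)\cong{}_A(A/J)$ gives $JM\subseteq MJ$. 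Hence $JM=MJ$, the bimodule $\bar M:=M/JM=M/MJ$ is then simultaneously the right top of $M_A$ and the left top of ${}_AM$, and the projective-cover argument applies verbatim to give $(3)$. (In fact, once $JM=MJ$ is known, conditions $(1)$ and $(2)$ literally coincide.)

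\emph{From $(3)$ to $(4)$, the main point.} Using ${}_AM\cong{}_AA$, I would choose a free left generator $n_0$, so that $a\mapsto an_0$ is a bijection $A\to M$. For each $c\in A$ the element $n_0c$ lies in $M=An_0$, hence $n_0c=\psi(c)n_0$ for a unique $\psi(c)\in A$, and from $n_0(cd)=(n_0c)d$ one checks that $\psi$ is a unital algebra endomorphism. Transporting the right action along $a\mapsto an_0$ then identifies $M$ with $A_\psi$ as bimodules. It remains to see that $\psi$ is bijective: if $\psi(c)=0$ then $n_0c=0$, so $Mc=An_0c=0$, and since $M_A\cong A_A$ is faithful this forces $c=0$. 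Thus $\psi$ is an injective endomorphism of the finite-dimensional algebra $A$, hence an automorphism, and $M\cong A_\psi$ proves $(4)$. I expect this last step to be the main obstacle, and in particular the bijectivity of $\psi$: the construction of $\psi$ uses the left part ${}_AM\cong{}_AA$ of $(3)$, whereas its injectivity uses the right part $M_A\cong A_A$ (faithfulness of $M_A$), and without the latter $\psi$ would only be an endomorphism.
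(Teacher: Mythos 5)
Your proposal is correct and follows essentially the same route as the paper: the semisimplicity of the ``crossed'' quotients forces $MJ=JM$, uniqueness of projective covers with isomorphic tops gives $(3)$, and $(3)\Rightarrow(4)$ is proved by exactly the paper's construction --- transporting the right action along a left generator ($n_0=\tau(1)$ in the paper's notation, so your $\psi$ is the paper's $\varphi$) and deducing injectivity from ${\rm Ker}(\psi)={\rm Ann}(M_A)={\rm Ann}(A_A)=0$. The only difference is trivial bookkeeping in the implication scheme (you prove $(4)\Rightarrow(1)\wedge(2)$ directly, the paper closes the cycle via $(4)\Rightarrow(3)\Rightarrow(1)$), which does not change the mathematical content.
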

\begin{proof} {\bf (1)$\Rightarrow$(2)} Since  $(M/JM)_A\cong (A/J)_A$, we obtain that the right module $(M/JM)_A$ is semisimple. Hence,  ${\rm rad}(M_A)\subseteq JM.$ In other words, we have $MJ\subseteq JM.$ Similarly, we have $JM\subseteq MJ.$ Thus, we obtain $JM=MJ$. It follows that (2) holds.

{\bf (2)$\Rightarrow$(3)} Since $M_A$ and $A_A$ are projective modules with isomorphic tops, we obtain  $M_A\cong A_A$. Similarly, we get ${}_AM\cong {}_AA.$

{\bf (3)$\Rightarrow$(4)} Let us denote an isomorphism of the left modules by $\tau: {}_AA \to {}_AM.$ Consider the bimodule $\widetilde A$ such that ${}_A(\widetilde A)={}_AA$ and the right module structure is given by  $x*a=\tau^{-1}(\tau(x)a).$ Then $\tau:\widetilde A\to M$ is a bimodule isomorphism.

We put $\varphi(a):=1*a.$ Then  $\varphi:A\to A$ is a linear map, $\varphi(1)=1,$ and $$\varphi(ab)=1*(ab)=(1*a)*b=\varphi(a)*b=$$
$$=(\varphi(a)\cdot 1)*b=\varphi(a)\cdot (1*b)=\varphi(a)\varphi(b).$$ Hence, $\varphi$ is an endomorphism of the algebra $A.$ Since $\tilde A_A\cong M_A \cong A_A,$ we obtain ${\rm Ker}(\varphi)={\rm Ann}(\widetilde A_A)={\rm Ann}(A_A)=0$. Consequently, $\varphi\in {\rm Aut}(A).$ From the equality  $x*a=(x\cdot 1)\star a=x\cdot (1\star a)=x\varphi(a),$ it follows that  $\widetilde A=A_\varphi.$ Hence, we have $M\cong A_\varphi$. 

{\bf (4)${\Rightarrow}$(3)} The linear map  $\varphi:A_A\to (A_\varphi)_A$ is a right module isomorphism. Thus, $(A_\varphi)_A\cong A_A$ and  ${}_A(A_\varphi)={}_AA.$

{\bf (3)${\Rightarrow}$(1)} The proof is obvious.
\end{proof}

An algebra $A$ is said to be an algebra without  semisimple blocks if $A$ can not be presented in a form  $A=A_1\times A_2,$ where $A_1$ is a semisimple algebra. This condition is equivalent to the following condition: any simple $A$-module is  non-projective or non-injective. For a self-injective algebra this is  equivalent to the requirement that any semisimple module is not projective. In particular, it follows that two semisimple modules are isomorphic if and only if they are isomorphic in the stable module category. 

\vbox{
\begin{theorem}\label{thm_scydim}
Let  $A$ be a self-injective algebra without semisimple blocks. Then the following conditions are equivalent.
\begin{enumerate}
\item ${\rm \underline{mod}\text{-}} A$ is a weakly $n$-Calabi-Yau category.
\item $\Omega^{n+1}_{A^e}(A)\cong (A^{\vee})_\varphi$ for some $\varphi\in \underline{\rm Inn}(A).$ 
\item $\Omega^{-n-1}_{A^e}(A)\cong D(A)_\varphi$ for some $\varphi\in \underline{\rm Inn}(A).$
\end{enumerate}
\end{theorem}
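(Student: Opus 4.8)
The plan is to reduce the whole statement to a single \emph{recognition lemma} about bimodules and then apply it twice, once to produce~(2) and once to produce~(3). First I would rewrite condition~(1). By the discussion preceding the theorem, ${\rm \underline{mod}\text{-}} A$ is weakly $n$-Calabi--Yau if and only if $\Omega^{n+1}\cong\underline\nu^{-1}$ as endofunctors of ${\rm \underline{mod}\text{-}} A$. Using $\Omega^{n+1}\cong-\otimes^{\bf st}_A\Omega^{n+1}_{A^e}(A)$ and $\underline\nu^{-1}\cong-\otimes^{\bf st}_A A^\vee$, this reads $-\otimes^{\bf st}_A\Omega^{n+1}_{A^e}(A)\cong-\otimes^{\bf st}_A A^\vee$; passing to inverse functors and using $\underline\nu\cong-\otimes^{\bf st}_A D(A)$, it equivalently reads $-\otimes^{\bf st}_A\Omega^{-n-1}_{A^e}(A)\cong-\otimes^{\bf st}_A D(A)$, i.e.\ $\Omega^{-n-1}\cong\underline\nu$.

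The heart of the proof is the following \textbf{Recognition Lemma}: if $X$ is a right-left projective $A$-bimodule with no projective direct summands and $-\otimes^{\bf st}_A X\cong{\rm Id}_{{\rm \underline{mod}\text{-}} A}$, then $X\cong A_\psi$ for some $\psi\in\underline{\rm Inn}(A)$. Granting it, I obtain (1)$\Rightarrow$(2) by setting $X:=D(A)\otimes_A\Omega^{n+1}_{A^e}(A)$. Since $D(A)\otimes_A-$ is an autoequivalence of ${\rm bimod}\text{-} A$ with quasi-inverse $A^\vee\otimes_A-$ (Corollary~\ref{rem_A^vee_D(A)}), the bimodule $X$ is again right-left projective and, as $\Omega^{n+1}_{A^e}(A)$ is a reduced syzygy over the self-injective algebra $A^e$, it has no projective summands. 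Moreover $-\otimes^{\bf st}_A X\cong\Omega^{n+1}\circ\underline\nu$, and the rewritten~(1) gives $\Omega^{n+1}\cong\underline\nu^{-1}$, so $-\otimes^{\bf st}_A X\cong{\rm Id}$. The Recognition Lemma yields $X\cong A_\psi$, and tensoring by $A^\vee$ on the left returns $\Omega^{n+1}_{A^e}(A)\cong A^\vee\otimes_A X\cong(A^\vee)_\psi$, which is~(2). In exactly the same way, applied to $X:=A^\vee\otimes_A\Omega^{-n-1}_{A^e}(A)$ and the second form of~(1), I get (1)$\Rightarrow$(3), namely $\Omega^{-n-1}_{A^e}(A)\cong D(A)\otimes_A X\cong D(A)_\psi$. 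The converses are the easy directions: for $\psi\in\underline{\rm Inn}(A)$ one has $-\otimes^{\bf st}_A(A^\vee)_\psi\cong\underline{\rm res}_\psi\circ\underline\nu^{-1}\cong\underline\nu^{-1}$, and similarly $-\otimes^{\bf st}_A D(A)_\psi\cong\underline\nu$, so the bimodule isomorphisms in (2) and~(3) immediately give $\Omega^{n+1}\cong\underline\nu^{-1}$, that is~(1).

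It remains to prove the Recognition Lemma, which is where I expect the real work to lie. By Lemma~\ref{lem_A_f} it suffices to verify its condition~(3), that $X_A\cong A_A$ and ${}_AX\cong{}_AA$; once $X\cong A_\psi$ is established, the equality $-\otimes^{\bf st}_A A_\psi=\underline{\rm res}_\psi\cong-\otimes^{\bf st}_A X\cong{\rm Id}$ shows automatically that $\psi$ is stably inner. To approach $X_A\cong A_A$ I would evaluate the functor at the simple modules $S_i$: each $S_i$ is indecomposable and non-projective (here the hypothesis ``without semisimple blocks'' enters), and $S_i\otimes_A X$ is stably isomorphic to $S_i$, so $S_i\otimes_A X\cong S_i\oplus P^{(i)}$ with $P^{(i)}$ projective; summing gives $X/JX=(A/J)\otimes_A X\cong(A/J)_A\oplus P$ for a projective right module $P$. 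The left isomorphism ${}_AX\cong{}_AA$ would be obtained by the symmetric statement $X\otimes^{\bf st}_A-\cong{\rm Id}$ on $A\text{-}\underline{\rm mod}$, available either through the duality $D$ of the first subsection (which interchanges the two one-sided conditions of Lemma~\ref{lem_A_f}) or through the Morita-type inverse bimodule ${\rm Hom}_A(X_A,A_A)$.

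The main obstacle is precisely the vanishing of the spurious projective summand $P$ (and its left analogue). The stable category is blind to projectives -- a projective bimodule induces the zero functor, and stable Hom cannot detect $P^{(i)}$ -- so $P=0$ cannot be deduced from the functor isomorphism alone. It must be forced by the \emph{non-stable} input that $X$ has no projective bimodule summands (preserved under the Nakayama autoequivalence $D(A)\otimes_A-$), in combination with the fact, recorded at the end of the subsection, that over an algebra without semisimple blocks two semisimple modules are isomorphic as soon as they are stably isomorphic. Making this compatibility precise -- matching the ranks of the one-sided projective covers of $X$ with those of $A$ and ruling out the extra projective summands on both sides -- is the technical core on which the equivalence of the three conditions ultimately rests.
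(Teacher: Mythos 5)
Your outline reproduces the paper's skeleton almost exactly: the easy implications via $-\otimes^{\bf st}_A(A^{\vee})_\psi\cong\underline{\nu}^{-1}$ and $-\otimes^{\bf st}_AD(A)_\psi\cong\underline{\nu}$, the auxiliary bimodules $X=D(A)\otimes_A\Omega^{n+1}_{A^e}(A)$ and $X=A^{\vee}\otimes_A\Omega^{-n-1}_{A^e}(A)$ (the paper's $M_1,M_2$), the reduction to Lemma \ref{lem_A_f}, the automatic stable-innerness of $\psi$ from $\underline{\rm res}_\psi\cong-\otimes^{\bf st}_AX\cong{\rm Id}$, and the untwisting by $A^{\vee}\otimes_A-$ and $D(A)\otimes_A-$ via Corollary \ref{rem_A^vee_D(A)}. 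But at the decisive step you leave a genuine gap, and you say so yourself: you only get $S_i\otimes_AX\cong S_i\oplus P^{(i)}$ and cannot kill the spurious projective summands, because the stable functor isomorphism is blind to them. The paper closes exactly this gap with one concrete idea absent from your proposal: compute $\Omega^{m}_{A^e}(A)$ from a \emph{minimal} complete projective bimodule resolution $\mathbf{P}$ of $A$ and transfer minimality along $S\otimes_A-$. Since ${}_A\mathbf{P}$ is contractible, $S\otimes_A\mathbf{P}$ is a complete projective resolution of $S$; and since ${\rm rad}(S\otimes_A\mathbf{P}_m)=S\otimes_A{\rm rad}(\mathbf{P}_m)$ for $S$ \emph{semisimple}, the inclusion ${\rm Im}(d^{\mathbf P}_m)\subseteq{\rm rad}(\mathbf{P}_{m-1})$ survives tensoring, so $S\otimes_A\mathbf{P}$ is again minimal and $S\otimes_A\Omega^{m}_{A^e}(A)$ has no projective summands at all. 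Because $A$ has no semisimple blocks, $\nu^{\mp1}(S)$ is semisimple and likewise has no projective summands, so the stable isomorphism $S\otimes_A\Omega^{n+1}_{A^e}(A)\cong\nu^{-1}(S)$ lifts to a genuine module isomorphism; together with the left-module analogue (obtained via the duality $D$, as you anticipated) this delivers the top conditions of Lemma \ref{lem_A_f} on the nose, with no $P^{(i)}$ to rule out.

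Note also that the input you propose to force the vanishing --- ``$\Omega^{n+1}_{A^e}(A)$ has no projective \emph{bimodule} summands'' --- is weaker than what is actually used. Absence of projective bimodule summands does pin down the same bimodule as the minimal resolution does, but it does not by itself propagate to the one-sided statement that $S\otimes_AX$ has no projective right-module summands; the bridge is precisely the minimality-of-the-resolution argument above, which operates on the differentials of $\mathbf{P}$ rather than on the direct-sum decomposition of the bimodule. So your Recognition Lemma, as stated, is never proved in your write-up (and the paper never needs it in that generality), and its missing proof is exactly the missing step of the theorem, not a routine ``technical core'' that can be deferred.
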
}

\begin{proof}
The implications (2)$\Rightarrow$(1) and (3)$\Rightarrow$(1) follow from the isomorphisms  $-\otimes^{\bf st}_A\Omega_{A^e}^{n+1}(A)\cong \Omega^{n+1},$\ $-\otimes^{\bf st}_AA^{\vee}_\varphi \cong \underline{\nu}^{-1}$ and $-\otimes^{\bf st}_A\Omega_{A^e}^{-n-1}(A)\cong \Omega^{-n-1},$ \  $-\otimes^{\bf st}_AD(A)_\varphi \cong \underline{\nu}.$ Let us prove  (1)$\Rightarrow$(2)$\wedge$(3). 

Let ${\rm \underline{mod}\text{-}} A$ be a weakly $n$-Calabi-Yau category.  Consider a minimal complete projective resolution  ${\bf P}$ of the bimodule $A.$ 
$$\xymatrix{
\dots \ar[r]^{d^{\bf P}_2} & {\bf P}_{1} \ar[r]^{d^{\bf P}_1} & {\bf P}_0 \ar[rr]^{d^{\bf P}_0}\ar@{->>}[rd] && {\bf P}_{-1}\ar[r]^{d^{\bf P}_{-1}} & {\bf P}_{-2} \ar[r]^{d^{\bf P}_{-2}}& \dots\\
& & & A\  \ar@{>->}[ur] & & & }$$

By definition, we have $\Omega^m_{A^e}(A)={\rm Ker}(d_m^{\bf P})$ for any $m\in \mathbb{Z}.$ All ${\bf P}_m$ are projective bimodules, so ${}_A({\bf P}_m)$ are projective left modules. The bimodule ${}_AA$ is a projective left module too. Hence, we have that the complex ${}_A{\bf P}$ is a contractible complex of left modules. Therefore, for any $A$-module $M$ the complex $M\otimes_A {\bf P}$ is a complete projective resolution of the module $M$ and $M\otimes_A\Omega^m_{A^e}(A)={\rm Ker}(d_m^{M\otimes_A{\bf P}}).$ The minimality of the complete resolution ${\bf P}$ is equivalent to the inclusion  ${\rm Im}(d_m^{{\bf P}})\subseteq {\rm rad}({\bf P}_{m-1})$ for all $m\in \mathbb{Z}.$  Consider an arbitrary right semisimple module $S.$ Then  ${\rm rad}(S\otimes_A {\bf P}_m)=S\otimes_A {\rm rad} ({\bf P}_m)$ for any integer $m,$ and, consequently, ${\rm Im}(1_S\otimes_A d^{\bf P}_m) \subseteq {\rm rad}(S\otimes_A {\bf P}_{m-1}).$ Thus, the complex  $S\otimes_A {\bf P}$ is a minimal complete projective resolution of $S.$ Hence, modules  ${\rm Ker}(d_m^{S\otimes_A{\bf P}})=S\otimes_A \Omega^m_{A^e}(A)$   have no projective summands. The modules  $\nu(S)$ and $\nu^{-1}(S)$ are semisimple too. Since the algebra $A$ has no semisimple blocks, the isomorphisms of functors 
 $\Omega^{n+1}\cong \underline{\nu}^{-1}$ and $\Omega^{-n-1}\cong \underline{\nu}$ include isomorphisms of the right modules $S\otimes_A \Omega^{n+1}_{A^e}(A)\cong \nu^{-1}(S)$ and $S\otimes_A \Omega^{-n-1}_{A^e}(A)\cong \nu(S)$ for any semisimple right module $S.$  Similarly, using the fact that the stable module category of left modules is a weakly $n$-Calabi-Yau category too, we obtain the  isomorphisms of left modules  $ \Omega^{n+1}_{A^e}(A)\otimes_AT\cong \nu^{-1}(T)$ and $ \Omega^{-n-1}_{A^e}(A)\otimes_AT\cong \nu(T)$ for any semisimple left module $T.$

Let us denote $M_1=D(A)\otimes_A \Omega^{n+1}_{A^e}(A)$ and $M_{2}=A^{\vee}\otimes_A \Omega^{-n-1}_{A^e}(A).$  Combining the obtained isomorphisms, the proposition \ref{prop_nakayama}, and its analogue for left modules,  we get that $S\otimes_A M_i\cong S$ for any semisimple right module  $S,$ and $ M_i\otimes_A T\cong T$ for any semisimple left module $T(i=1,2).$ Denote by $J$ the radical of the algebra $A.$ Therefore, we have  ${}_A(M_i/JM_i)\cong {}_A((A/J)\otimes_AM_i)\cong {}_A(A/J)$, and $(M_i/M_iJ)_A\cong (M_i\otimes_A(A/J))_A\cong (A/J)_A$. From the lemma \ref{lem_A_f} it follows that  $M_i\cong A_{\varphi_i}$ for some $\varphi_i\in {\rm Aut}(A).$ 
It is clear that $\underline{\rm res}_{\varphi_1}\cong -\otimes_A^{\bf st} M_1\cong \underline{\nu}\circ \underline{\nu}^{-1} \cong {\rm Id}$ and  $\underline{\rm res}_{\varphi_2}\cong -\otimes_A^{\bf st} M_2\cong \underline{\nu}^{-1}\circ \underline{\nu} \cong {\rm Id}.$
 Thus, we have two isomorphisms $D(A)\otimes_A \Omega^{n+1}_{A^e}(A)\cong A_{\varphi_1}$ and $A^{\vee}\otimes_A \Omega^{-n-1}_{A^e}(A)\cong A_{\varphi_2},$ where $\varphi_1,\varphi_2\in \underline{\rm Inn}(A).$ If we apply the functor  $A^{\vee}\otimes_A -$ to the first isomorphism, apply the functor $D(A)\otimes_A-$ to the second isomorphism, and use the corollary \ref{rem_A^vee_D(A)}, we obtain  $\Omega^{n+1}_{A^e}(A)\cong (A^{\vee})_{\varphi_1}$ and  $\Omega^{-n-1}_{A^e}(A)\cong D(A)_{\varphi_2},$ where $\varphi_1,\varphi_2\in \underline{\rm Inn}(A).$
\end{proof}

\begin{corollary} \label{cor_scydim}
Let $A$ be a Frobenius algebra without semisimple blocks with a Nakayama automorphism $\tilde\nu$. Then the following conditions are equivalent.
\begin{enumerate}
\item ${\rm \underline{mod}\text{-}} A$ is a weakly $n$-Calabi-Yau category.
\item $\Omega^{n+1}_{A^e}(A)\cong A_{\tilde\nu^{-1}\varphi}$ for some $\varphi\in \underline{\rm Inn}(A).$ 
\end{enumerate}
\end{corollary}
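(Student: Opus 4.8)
The plan is to deduce this immediately from Theorem~\ref{thm_scydim}, since a Frobenius algebra is in particular self-injective (as noted in the text), and then to convert the condition involving $A^{\vee}$ into one involving $\tilde\nu$ by means of Corollary~\ref{cor_frobenious_iso}. Concretely, because $A$ is a Frobenius algebra without semisimple blocks, the equivalence (1)$\Leftrightarrow$(2) of Theorem~\ref{thm_scydim} applies verbatim: ${\rm \underline{mod}\text{-}} A$ is a weakly $n$-Calabi--Yau category if and only if $\Omega^{n+1}_{A^e}(A)\cong (A^{\vee})_\varphi$ for some $\varphi\in\underline{\rm Inn}(A)$. So the entire content of the corollary reduces to identifying $(A^{\vee})_\varphi$ with $A_{\tilde\nu^{-1}\varphi}$ as bimodules.

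For that identification I would first invoke Corollary~\ref{cor_frobenious_iso}, which gives the bimodule isomorphism $A^{\vee}\cong A_{\tilde\nu^{-1}}$. Applying the twisting functor $(-)_\varphi$ to both sides yields $(A^{\vee})_\varphi\cong (A_{\tilde\nu^{-1}})_\varphi$, and I would finish by checking how the twists compose on the right action. Since $A_{\tilde\nu^{-1}}$ carries the right action $x * a = x\tilde\nu^{-1}(a)$, and the functor $(-)_\varphi$ further restricts the right action along $\varphi$ according to the formula $m * a = m\varphi(a)$ defining ${\rm res}_\varphi$, the resulting right action is $x\bullet a = x\tilde\nu^{-1}(\varphi(a)) = x(\tilde\nu^{-1}\varphi)(a)$. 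Hence $(A_{\tilde\nu^{-1}})_\varphi\cong A_{\tilde\nu^{-1}\varphi}$, and chaining the isomorphisms gives $(A^{\vee})_\varphi\cong A_{\tilde\nu^{-1}\varphi}$ for the same $\varphi$. This transports condition (2) of the theorem to condition (2) of the corollary in both directions, preserving the quantifier ``for some $\varphi\in\underline{\rm Inn}(A)$''.

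There is essentially no genuine obstacle here: once Theorem~\ref{thm_scydim} is in hand, the argument is pure bookkeeping with twisted bimodules. The one place where care is needed, and the only point I expect to require attention, is the order of the composition $\tilde\nu^{-1}\varphi$ (rather than $\varphi\tilde\nu^{-1}$), which is fixed by tracking that the outer restriction $(-)_\varphi$ is applied \emph{after} the Nakayama twist on the right action, and the observation that it is literally the \emph{same} automorphism $\varphi$ that witnesses both formulations, so no adjustment of the stably inner automorphism is needed.
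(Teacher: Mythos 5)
Your proposal is correct and follows exactly the paper's route: the paper proves this corollary in one line by combining Theorem~\ref{thm_scydim} with Corollary~\ref{cor_frobenious_iso}, and your argument simply makes explicit the twisting bookkeeping $(A^{\vee})_\varphi\cong (A_{\tilde\nu^{-1}})_\varphi = A_{\tilde\nu^{-1}\varphi}$ that the paper leaves implicit. Your attention to the order of composition $\tilde\nu^{-1}\varphi$ and to the fact that the same witness $\varphi\in\underline{\rm Inn}(A)$ serves in both directions is exactly the right check.
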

\begin{proof}
It follows from the previous theorem and the corollary \ref{cor_frobenious_iso}.
\end{proof}

\section{Stably inner automorphisms.}

\subsection{Inner modulo socle automorphisms.}
In this subsection we introduce a notion of an inner modulo socle automorphism and prove that any inner modulo socle automorphism is a stably inner automorphism. It gives a lot of examples of stably inner automorphisms which are not inner.

It is well-known that   ${\rm soc}(A_A)={\rm soc}(_AA)$ if $A$ is a self-injective algebra. We denote briefly  ${\rm soc}(A):={\rm soc}(_AA)$ and $A/{\rm soc}:=A/{\rm soc}(A).$ Any automorphism  $\varphi$ of an algebra sends  ${\rm soc}(A)$ to itself. Thus, $\varphi$ induces an automorphism   $\varphi/{\rm soc} : A/{\rm soc}\to A/{\rm soc}.$

\begin{definition} 
An automorphism $\varphi$ of an algebra $A$ is called {\it inner modulo socle} if the induced automorphism $\varphi/{\rm soc}:A/{\rm soc}\to A/{\rm soc}$ is inner.
\end{definition}

\begin{proposition}
Let $A$ be a self-injective algebra and $\varphi\in {\rm Aut}(A)$. If  $\varphi$ is inner modulo socle, then it is stably inner.
\end{proposition}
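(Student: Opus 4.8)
The plan is to combine the group structure of $\underline{\rm Inn}(A)$ with an explicit comparison functor. First note that every inner automorphism is already stably inner: if $\varphi(x)=a^{-1}xa$ then ${\rm res}_\varphi\cong{\rm Id}$ on ${\rm mod\text{-}} A$ by the proposition of Bass quoted above, so $\underline{\rm res}_\varphi\cong{\rm Id}$ a fortiori. Next I would reduce to the case of no semisimple blocks. The product of the semisimple blocks of $A$ is a characteristic ideal, so $\varphi$ splits as $\varphi_{\rm ss}\times\varphi'$; since the stable category of a semisimple algebra vanishes, only $\varphi'$ matters and it is again inner modulo socle. Hence assume $A$ has no semisimple blocks, so that ${\rm soc}(A)\subseteq J$ and units of $A/{\rm soc}$ lift to units of $A$. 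Writing $\varphi/{\rm soc}=c_{\bar u}$ for a unit $\bar u$, I lift $\bar u$ to a unit $u\in A$ and set $\psi=c_u^{-1}\varphi$ with $c_u\colon x\mapsto u^{-1}xu$. Then $\psi/{\rm soc}={\rm id}$, and since $c_u$ is inner while $\underline{\rm Inn}(A)$ is a subgroup of ${\rm Aut}(A)$, the automorphism $\varphi$ is stably inner if and only if $\psi$ is. So from now I assume $\varphi(a)-a\in{\rm soc}(A)$ for every $a$.

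Second, I record the structure this forces. Put $\delta=\varphi-{\rm id}\colon A\to{\rm soc}(A)$. As ${\rm soc}(A)\subseteq J$ and ${\rm soc}(A)\cdot J=0$ we get ${\rm soc}(A)^2=0$, and a short computation then shows $\delta(ab)=a\,\delta(b)+\delta(a)\,b$, i.e. $\delta$ is a derivation into ${\rm soc}(A)$. Two consequences are key. First, $\varphi$ acts as the identity on $A/J$, so $S_\varphi=S$ canonically for every semisimple module $S$. Second, for projective $P$ the module $P_\varphi$ is again projective (restriction along an automorphism preserves projectives) with ${\rm rad}(P_\varphi)=PJ$ and top $(P/PJ)_\varphi=P/PJ$, whence $P_\varphi\cong P$. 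Finally, the tautological linear map $e_M\colon M_\varphi\to M$, the identity on underlying spaces, is natural in $M$ and satisfies $e_M(m*a)-e_M(m)\,a=m\,\delta(a)\in M\cdot{\rm soc}(A)\subseteq{\rm soc}(M)$; thus $\{e_M\}$ is a natural family which is $A$-linear only modulo the socle.

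Third, and this is the heart of the argument, I must convert $\{e_M\}$ into an honest natural isomorphism $\underline{\rm res}_\varphi\cong{\rm Id}$ in the stable category. The main obstacle is precisely that $e_M$ cannot be repaired to an honest module map functorially: a natural family of genuine isomorphisms $M_\varphi\cong M$ would, by the Bass proposition together with Lemma \ref{lem_A_f}, force $A_\varphi\cong A$ as bimodules and hence $\varphi$ to be inner, which already fails for $A=k[x]/(x^2)$ with $x\mapsto\lambda x$. The isomorphism must therefore live genuinely in ${\rm \underline{mod}\text{-}} A$, where the socle-valued defect of $e_M$ has to become null. My plan is to pass to a minimal projective presentation $P_1\to P_0\to M$, use the canonical identifications $P_{i,\varphi}\cong P_i$ and $S_\varphi=S$ on tops, and transport $e_M$ across the presentation; since $\underline{\rm res}_\varphi$ is a triangulated endofunctor commuting with $\Omega^{-1}$ and multiplication by ${\rm soc}(A)$ annihilates $J$, I expect the defect to be absorbed into maps that vanish in the stable category, yielding a well-defined stable morphism $\underline\theta_M$. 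That $\underline\theta_M$ is an isomorphism is then automatic, as $e_M$ is already a linear bijection and the correction lies in the socle.

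I expect the genuinely delicate point to be coherence: checking that the stable maps $\underline\theta_M$ assemble into a bona fide natural transformation, i.e. that the choices of projective covers and of the corrections can be made compatibly across all $M$ and all morphisms $f\colon M\to N$. Once this naturality is secured, the conclusion $\underline{\rm res}_\varphi\cong{\rm Id}_{{\rm \underline{mod}\text{-}} A}$, and hence $\varphi\in\underline{\rm Inn}(A)$, follows at once.
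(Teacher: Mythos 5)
Your proposal stops exactly where the proof has to happen. The preparatory steps are sound: the splitting off of semisimple blocks, the reduction to $\varphi\equiv{\rm id}$ modulo the socle via lifting units, the observation that $\delta=\varphi-{\rm id}$ is a socle-valued derivation, and the defect computation $e_M(m*a)-e_M(m)a=m\delta(a)$. But the conversion of $\{e_M\}$ into a natural isomorphism $\underline{\rm res}_\varphi\cong{\rm Id}_{{\rm \underline{mod}\text{-}} A}$ is only announced, not carried out: you say you ``expect the defect to be absorbed'' after transporting $e_M$ along a minimal projective presentation, and you yourself flag the coherence of the corrections across all $M$ and all morphisms as unverified. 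There is no mechanism in the sketch that actually kills the defect --- the fact that ${\rm soc}(A)$ annihilates $J$ does not by itself make $m\delta(a)$ stably irrelevant, and for modules with projective summands the defect genuinely obstructs $A$-linearity, as your own $k[x]/(x^2)$ example shows it must. So the argument has a real gap at its declared heart.

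The missing ingredient is a single well-known fact, and it is precisely the one the paper's proof pivots on: over a self-injective algebra, a module has no projective summands if and only if its annihilator contains ${\rm soc}(A)$. Consequently every object of ${\rm \underline{mod}\text{-}} A$ is isomorphic to the image of an $A/{\rm soc}$-module, and on such modules your tautological map $e_M$ is already an honest $A$-linear natural isomorphism, since the defect lies in $M\cdot{\rm soc}(A)=0$. The paper packages this more efficiently and without your reductions: it forms the functor ${\rm mod\text{-}}(A/{\rm soc})\to{\rm \underline{mod}\text{-}} A$, whose image $\underline{\mathcal{M}}$ (the modules without projective summands) is equivalent to the whole stable category, invokes the Bass proposition to get ${\rm res}_{\varphi/{\rm soc}}\cong{\rm Id}$ directly (no normalization of $\varphi$ to the identity mod socle, and no hypothesis excluding semisimple blocks --- note the proposition is stated without one), and transports this isomorphism through a commutative diagram to all of ${\rm \underline{mod}\text{-}} A$. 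With that fact in hand, your coherence worry evaporates: naturality is inherited from an isomorphism of functors on ${\rm mod\text{-}}(A/{\rm soc})$ and from the equivalence $\underline{\mathcal{M}}\simeq{\rm \underline{mod}\text{-}} A$, rather than manufactured from compatible choices of projective covers, which is where your plan would stall.
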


\begin{proof}
Let us consider a functor ${\rm mod\text{-}} (A/{\rm soc}) \to {\rm \underline{mod}\text{-}} A$ obtained by composition of the restriction functor ${\rm mod\text{-}} (A/{\rm soc}) \to {\rm mod\text{-}} A$  and the canonical projection ${\rm mod\text{-}} A\to {\rm \underline{mod}\text{-}} A.$ Denote the image of this functor by ${\underline{\mathcal{M}}}.$ It is well-known that a module over a self-injective algebra has no projective summands if and only if its annihilator includes ${\rm soc}(A).$ Thus,   ${\underline{\mathcal{M}}}$ is a full subcategory consisting of modules without projective summands, and it is equivalent to the whole category ${\rm \underline{mod}\text{-}} A.$ Further, the following  diagram is commutative. 
$$\xymatrix{
{\rm mod\text{-}} (A/{\rm soc})\ar@{->>}[rr]\ar[d]^{{\rm res}_{\varphi/{\rm soc}}} & & {\underline{\mathcal{M}}}\ar[d]^{\underline{\rm res}_{\varphi}}\ar@{^(->}@<-2pt>[rr]^{\simeq} & & {\rm \underline{mod}\text{-}} A\ar[d]^{\underline{\rm res}_{\varphi}} \\
{\rm mod\text{-}} (A/{\rm soc})\ar@{->>}[rr] & & {\underline{\mathcal{M}}} \ar@{^(->}@<-2pt>[rr]^{\simeq}  & &  {\rm \underline{mod}\text{-}} A}$$

If  $\varphi$ is inner modulo socle, then ${\rm res}_{\varphi/{\rm soc}}\cong {\rm Id}.$ It follows that the functor $\underline{\rm res}_{\varphi}$ restricted on ${\underline{\mathcal{M}}}$ is isomorphic to the identity functor too. From the commutativity of the right square we get that $\underline{\rm res}_{\varphi}$ is isomorphic to the identity functor on the whole category ${\rm \underline{mod}\text{-}} A.$
\end{proof}

\subsection{Action of a stably inner automorphism on modules.}

It is well-known that two modules without projective summands are isomorphic in the stable module category if and only if they are isomorphic as modules. It follows that for a stably inner automorphism $\varphi$ and a module $M$ without projective summands there is an isomorphism  $M\cong M_{\varphi}.$ In this subsection we prove that this isomorphism holds for any module. At first, we prove a technical lemma which we need further.

For any $A$-module $M$ we choose submodules  $M_{\mathcal{P}}$ and $P_M$ such that  $M=M_{\mathcal{P}}\oplus P_M$, the module $M_{\mathcal{P}}$ has no projective summands, and the  module $P_M$ is projective. For  a homomorphism $f:M\to N$ we denote by  $f_{\mathcal{P}}:M_{\mathcal{P}}\to N_{\mathcal{P}}$ the composition of $f$ with the inclusion  $M_{\mathcal{P}}\to M$ and the projection $N\to N_{\mathcal{P}}.$ For  homomorphisms  $f,g:M\to N$  the equality  $f_{\mathcal{P}}=g_{\mathcal{P}}$  includes  $\underline{f}=\underline{g}.$ It is well-known that for a homomorphism  $f:M\to N$ the morphism $\underline{f}$ is an isomorphism if and only if  $f_{\mathcal{P}}$ is an isomorphism   (for example, see the proof of the  lemma 8.1 in \cite{AR_St_I}).

\begin{lemma}\label{horoshiy_vibor_isomorphisma}
Let $\varphi$ be an automorphism of an algebra $A$ such that for any simple module $S$ there is an isomorphism  $S\cong S_{\varphi}$. Moreover, let $M$ be a module and  $\Phi:M\to M_{\varphi}$ be an isomorphism in the stable module category ${\rm \underline{mod}\text{-}} A.$ Then there is an isomorphism  $f:M\to M_{\varphi}$ in the category  ${\rm mod\text{-}} A$ such that $\underline{f}\cong \Phi.$
\end{lemma}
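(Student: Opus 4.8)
The plan is to lift the given stable isomorphism $\Phi$ to an honest module isomorphism by correcting its behaviour on the projective summands, using the two facts recalled just before the statement: that $f_{\mathcal P}=g_{\mathcal P}$ forces $\underline f=\underline g$, and that $\underline f$ is an isomorphism precisely when $f_{\mathcal P}$ is one.

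First I would fix the decomposition $M=M_{\mathcal P}\oplus P_M$ and observe that restriction along an automorphism interacts well with it. Since ${\rm res}_\varphi\cong -\otimes_A A_\varphi$ and $A_\varphi$ is projective on both sides, ${\rm res}_\varphi$ is an exact autoequivalence of ${\rm mod\text{-}} A$ with inverse ${\rm res}_{\varphi^{-1}}$; it therefore sends projectives to projectives and reflects projectivity. Consequently $(P_M)_\varphi$ is projective and $(M_{\mathcal P})_\varphi$ has no projective summands, so $M_\varphi=(M_{\mathcal P})_\varphi\oplus (P_M)_\varphi$ is a decomposition of the required type, which I may take to be the chosen one for $M_\varphi$. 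Next I would pick any representative $f_0:M\to M_\varphi$ of $\Phi$ in ${\rm mod\text{-}} A$. Because $\underline{f_0}=\Phi$ is an isomorphism in ${\rm \underline{mod}\text{-}} A$, the recalled criterion gives that $(f_0)_{\mathcal P}:M_{\mathcal P}\to (M_{\mathcal P})_\varphi$ is an honest isomorphism of modules.

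The remaining task, and the heart of the argument, is to produce an honest isomorphism on the projective parts, i.e. to show $P_M\cong (P_M)_\varphi$. For this I would use the hypothesis on simple modules together with the fact that, over a self-injective Artin algebra, a projective module is determined up to isomorphism by its top. Since $\varphi$ is an automorphism it preserves the radical $J$, whence ${\rm rad}(N_\varphi)={\rm rad}(N)$ as a subset for every module $N$, and therefore ${\rm top}(N_\varphi)\cong ({\rm top}\, N)_\varphi$. Applying this to $N=P_M$ and writing ${\rm top}(P_M)$ as a direct sum of simple modules $S$, the hypothesis $S\cong S_\varphi$ yields $({\rm top}(P_M))_\varphi\cong {\rm top}(P_M)$, hence ${\rm top}((P_M)_\varphi)\cong {\rm top}(P_M)$, and so a module isomorphism $g:P_M\to (P_M)_\varphi$. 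This step is where the assumption on simples is essential; it is the only genuine obstacle, everything else being bookkeeping with the block decomposition.

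Finally I would assemble $f=(f_0)_{\mathcal P}\oplus g:M_{\mathcal P}\oplus P_M\to (M_{\mathcal P})_\varphi\oplus (P_M)_\varphi$. Being a direct sum of two isomorphisms, $f$ is an isomorphism in ${\rm mod\text{-}} A$, and its $\mathcal P$-part is exactly $(f_0)_{\mathcal P}$; so by the first recalled fact $\underline f=\underline{f_0}=\Phi$, as required. I expect the only points needing care to be the compatibility of the two chosen decompositions, handled by the Krull--Schmidt theorem, which lets me replace the a priori fixed decomposition of $M_\varphi$ by $(M_{\mathcal P})_\varphi\oplus (P_M)_\varphi$ without affecting the stable class, and the elementary verification that ${\rm res}_\varphi$ preserves radicals and reflects projectivity.
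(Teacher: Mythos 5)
Your proposal is correct and follows essentially the same route as the paper's own proof: both lift a representative of $\Phi$, note that its $\mathcal{P}$-component is an isomorphism by the recalled criterion, deduce $P\cong P_{\varphi}$ for projectives from the hypothesis on simples via tops, match the projective parts using the Krull--Schmidt theorem, and define $f$ as the direct sum of the $\mathcal{P}$-component and the projective-part isomorphism. The only (cosmetic) difference is that you re-choose the decomposition of $M_{\varphi}$ as $(M_{\mathcal{P}})_{\varphi}\oplus (P_M)_{\varphi}$, whereas the paper keeps the fixed decomposition $M_{\varphi}=(M_{\varphi})_{\mathcal{P}}\oplus P_{M_{\varphi}}$ and transports the isomorphism $\theta:P_M\to P_{M_{\varphi}}$ obtained from Krull--Schmidt.
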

\begin{proof}
Since $S\cong S_\varphi$ for any simple module $S,$ we get ${\rm top}(P_{\varphi})={\rm top}(P)_{\varphi}\cong {\rm top}(P)$ for any projective module $P$. Using that  $P_\varphi$ is a projective module, we obtain $P\cong P_{\varphi}$ for any projective module $P.$

Let us choose a homomorphism $\tilde f:M\to M_{\varphi}$ such that $\underline{\tilde f}=\Phi.$ 
It is clear that $(\tilde f)_{\mathcal{P}}$ is an isomorphism.  Since  ${\rm res}_{\varphi}$ is an additive functor, we get $M_{\varphi}\cong (M_{\mathcal{P}})_{\varphi}\oplus (P_{M})_{\varphi}\cong (M_{\mathcal{P}})_{\varphi}\oplus P_{M} .$ Using the Krull-Schmidt theorem, we obtain $P_{M}\cong P_{M_{\varphi}}.$ Denote this isomorphism by $\theta:P_{M}\to P_{M_{\varphi}}.$ Then we present modules as a direct sum $M=M_{\mathcal{P}}\oplus P_{M}$, $M_{\varphi}=(M_{\varphi})_{\mathcal{P}}\oplus P_{M_{\varphi}}$ and define an isomorphism  $f:M\to M_{\varphi}$ by the formula $f=(\tilde f)_{\mathcal{P}}\oplus \theta_M.$  Finally, since $( f)_{\mathcal{P}}=(\tilde f)_{\mathcal{P}},$ we obtain  $\underline{f}=\underline{\tilde f}=\Phi.$
\end{proof}

\begin{proposition}
Let $A$ be a self-injective algebra without semisimple blocks, and $\varphi$ is a stably inner automorphism of $A.$ Then for any $A$-module $M$ there is an (not necessarily natural) isomorphism
$M\cong M_{\varphi}.$
\end{proposition}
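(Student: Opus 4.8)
The plan is to reduce everything to Lemma~\ref{horoshiy_vibor_isomorphisma}, whose hypothesis I first verify: namely that $S\cong S_\varphi$ for every simple module $S$. Since $A$ has no semisimple blocks, every semisimple module, and in particular every simple module $S$, is non-projective; being indecomposable, $S$ can therefore have no projective direct summand (its only nonzero summand is itself). Because $\varphi$ is stably inner, the natural isomorphism $\underline{\rm res}_\varphi\cong{\rm Id}$ supplies an isomorphism $S\cong S_\varphi$ in ${\rm \underline{mod}\text{-}} A$. As both $S$ and $S_\varphi$ have no projective summands, I would use the standard fact (recalled just before the statement) that a stable isomorphism between modules without projective summands is induced by an honest isomorphism, obtaining $S\cong S_\varphi$ already in ${\rm mod\text{-}} A$.

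Next, for an arbitrary module $M$, the same natural isomorphism $\underline{\rm res}_\varphi\cong{\rm Id}$ yields an isomorphism $\Phi\colon M\to M_\varphi$ in the stable category. Having checked $S\cong S_\varphi$ for all simple $S$ in the previous step, I can now apply Lemma~\ref{horoshiy_vibor_isomorphisma} to this $\Phi$ to produce an actual isomorphism $f\colon M\to M_\varphi$ in ${\rm mod\text{-}} A$ with $\underline{f}=\Phi$. This gives the desired $M\cong M_\varphi$ and finishes the argument.

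The only point that genuinely requires care is the first step. Passing from a stable isomorphism to a bona fide module isomorphism is legitimate only for modules without projective summands, and it is exactly the no-semisimple-blocks hypothesis that forces every simple module to be non-projective and hence to qualify. Once the hypothesis $S\cong S_\varphi$ of Lemma~\ref{horoshiy_vibor_isomorphisma} is established, I expect no further obstacle: the lemma already handles the projective part of a general $M$ via the Krull--Schmidt theorem, so the content of the proposition is essentially the combination of this simple-module verification with that lemma.
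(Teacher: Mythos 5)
Your proposal is correct and follows the paper's own argument essentially verbatim: the absence of semisimple blocks forces every simple module to be non-projective, so the stable isomorphism $S\cong S_\varphi$ lifts to a genuine one, and Lemma~\ref{horoshiy_vibor_isomorphisma} then handles an arbitrary $M$. Your extra remark that $S_\varphi$ is itself simple and hence without projective summands is a correct (and slightly more explicit) justification of the same step.
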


\begin{proof}
Since the algebra $A$ has no semisimple blocks, any simple $A$-module is non-projective. Thus, the isomorphism $S\cong S_\varphi$ in the category ${\rm \underline{mod}\text{-}} A$ includes the isomorphism  $S\cong S_\varphi$ for any simple module $S.$ Hence, we can use the previous lemma and lift the isomorphisms $\Phi_M:M\to M_\varphi$ from the stable module category to the category of modules. 
\end{proof}

\subsection{Stable cyclic module category.}

Let $A$ be a finite dimensional algebra. Denote by ${\rm cycl}\text{-}A$ the full subcategory of ${\rm mod\text{-}} A$ consisting of cyclic modules. Any cyclic module is isomorphic to a module of the form  $A/I,$ where $I$ is a right ideal.  Denote by ${\rm cycl}_0\text{-}A$ the full subcategory of  ${\rm cycl}\text{-}A$ consisting of modules of the form  $A/I.$ Then the inclusion  $\iota: {\rm cycl}_0\text{-}A \to {\rm cycl}\text{-}A$ is an equivalence. We denote by  $ \uline{\rm cycl}\text{-} A$ and $\uline{\rm cycl}_0\text{-} A$ the associated subcategories in the category ${\rm \underline{mod}\text{-}} A.$   The functor ${\rm res}_{\varphi}$ maps cyclic modules to cyclic modules. Denote by  ${\rm res}^{\rm cycl}_{\varphi}:{\rm cycl}\text{-}A \to {\rm cycl}\text{-}A$ its restriction. In this subsection we investigate these categories and the action of ${\rm res}^{\rm cycl}_\varphi$ on them.

For right ideals $I,J$ of $A$ we put 
$(J:I):=\{a\in A \mid  aI\subseteq J\}.$
Let us notice that $(J:I)$ is a vector subspace and   $J\subseteq (J:I).$

\begin{lemma}\label{cycl_hom} The map $f\mapsto f(1+I)$ gives the isomorphism:
$${\rm Hom}_A(A/I,A/J)\cong \frac{(J:I)}{J}.$$
\end{lemma}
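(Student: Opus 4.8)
The plan is to realize the correspondence through evaluation at the generator and to check that the defining condition $aI\subseteq J$ is exactly what makes such a homomorphism well defined. Since the module $A/I$ is generated by the coset $1+I$, any right $A$-module homomorphism $f\colon A/I\to A/J$ is completely determined by the value $f(1+I)$; choosing a representative $a\in A$ we write $f(1+I)=a+J$. The map $f\mapsto f(1+I)$ is the claimed isomorphism, so the work splits into identifying its image, its kernel, and its linearity.

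First I would determine which cosets $a+J$ can occur. Because $f$ is $A$-linear, $f(x+I)=f((1+I)x)=(a+J)x=ax+J$ for every $x\in A$. For this assignment to be well defined on $A/I$ we need $x\in I$ to force $ax\in J$, that is $aI\subseteq J$, which is precisely the condition $a\in (J:I)$. Conversely, for any $a\in (J:I)$ the formula $x+I\mapsto ax+J$ does define an $A$-module homomorphism $A/I\to A/J$: well definedness is guaranteed by $aI\subseteq J$, and $A$-linearity is immediate, while it sends $1+I$ to $a+J$. This establishes surjectivity onto $(J:I)/J$ and shows that every homomorphism arises in this way.

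Next I would pin down the ambiguity in the representative. The value $f(1+I)$ lives in $A/J$, so $a$ is only determined modulo $J$; hence the target of the evaluation map is naturally the quotient $(J:I)/J$, and the map $f\mapsto f(1+I)$ is well defined into it and visibly $k$-linear. For injectivity, suppose $f(1+I)=0$ in $A/J$, i.e. $a\in J$; then $f((1+I)x)=ax+J=J$ for all $x\in A$, so $f=0$ because $1+I$ generates $A/I$.

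There is no genuine obstacle here: the single point requiring care is the translation of the phrase ``$f$ is well defined on $A/I$'' into the membership $a\in (J:I)$, together with the dual observation that ``$f(1+I)=0$'' amounts to $a\in J$. Combining surjectivity, injectivity and $k$-linearity yields the stated isomorphism ${\rm Hom}_A(A/I,A/J)\cong (J:I)/J$.
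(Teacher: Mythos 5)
Your proof is correct and is exactly the standard argument the paper has in mind when it declares this proof ``obvious'': evaluation at the generator $1+I$, with well-definedness of the induced map $x+I\mapsto ax+J$ translating into $a\in(J:I)$ and the ambiguity of the representative accounting for the quotient by $J$. All steps (surjectivity, injectivity, $k$-linearity, and right $A$-linearity of the constructed maps) are verified correctly, so there is nothing to add.
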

\begin{proof} The proof is obvious. 
\end{proof}

For $c\in (J:I),$ we denote by $(c\, \cdot)$ the unique homomorphism $(c\,\cdot)=f:A/I\to A/J$ such that $f(1+I)=c+J.$ Using this notation, the composition is expressed in the following way
\hbox{$(c_1 \cdot )\circ (c_2\cdot)=(c_1c_2\cdot).$}

\begin{proposition}\label{cycl_sthom}
The map $f\mapsto f(1+I)$ induces an isomorphism:
$$\underline{{\rm Hom}}_A(A/I,A/J)\cong \frac{(J:I)}{(0:I)+J}$$
\end{proposition}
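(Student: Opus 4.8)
The plan is to build directly on Lemma~\ref{cycl_hom}, which already identifies ${\rm Hom}_A(A/I,A/J)$ with $(J:I)/J$ via the map $\Theta:f\mapsto f(1+I)$. Since by definition $\underline{\rm Hom}_A(A/I,A/J)={\rm Hom}_A(A/I,A/J)/\mathcal{P}(A/I,A/J)$, it suffices to show that $\Theta$ carries the subspace $\mathcal{P}(A/I,A/J)$ of maps factoring through a projective onto the subspace $((0:I)+J)/J$ of $(J:I)/J$. Once this is established, the desired isomorphism follows by passing to the quotient, since $(0:I)\subseteq(J:I)$ (if $aI=0$ then certainly $aI\subseteq J$), so that
$$\underline{\rm Hom}_A(A/I,A/J)\cong\frac{(J:I)/J}{((0:I)+J)/J}\cong\frac{(J:I)}{(0:I)+J}.$$

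To identify $\mathcal{P}(A/I,A/J)$ I would fix, as the distinguished epimorphism from a projective onto $A/J$, the canonical projection $\sigma:A\twoheadrightarrow A/J$, $\sigma(a)=a+J$. By the criterion recalled in Subsection~1.1, a homomorphism $f:A/I\to A/J$ lies in $\mathcal{P}(A/I,A/J)$ if and only if it can be written as $f=\sigma\circ g$ for some $g:A/I\to A$. The next step is to compute ${\rm Hom}_A(A/I,A)$: such a $g$ is determined by $a:=g(1+I)\in A$, and well-definedness on the quotient forces $aI=0$, i.e. $a\in(0:I)$; conversely every $a\in(0:I)$ yields a well-defined $g$. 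Hence $g\mapsto g(1+I)$ gives ${\rm Hom}_A(A/I,A)\cong(0:I)$. For such a $g$ the composite $\sigma\circ g$ is exactly the map $(a\,\cdot)$ with $a\in(0:I)$, so $\Theta$ sends $\sigma\circ g$ to the coset $a+J$. Running over all $g$ shows that the image $\Theta(\mathcal{P}(A/I,A/J))$ is precisely $\{a+J\mid a\in(0:I)\}=((0:I)+J)/J$, as required.

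The computation is essentially routine, so the main thing to be careful about is the identification of $\mathcal{P}$: one must invoke the quoted fact that, after fixing the single epimorphism $\sigma:A\twoheadrightarrow A/J$, factoring through \emph{any} projective is equivalent to factoring through this one $A$, rather than trying to control an arbitrary intermediate projective. The only other points deserving a line of verification are the well-definedness of $g$ (giving the annihilator condition $a\in(0:I)$) and the inclusion $(0:I)\subseteq(J:I)$ that makes the final quotient $(J:I)/((0:I)+J)$ meaningful; both are immediate. I would not expect any genuine obstacle beyond bookkeeping with the distinguished generator $1+I$ and the composition rule $(c_1\cdot)\circ(c_2\cdot)=(c_1c_2\cdot)$.
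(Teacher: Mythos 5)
Your proposal is correct and follows essentially the same route as the paper: both fix the canonical projection $\sigma:A\twoheadrightarrow A/J$, use the criterion from Subsection~1.1 to identify $\mathcal{P}(A/I,A/J)$ with the maps of the form $\sigma\circ g$, and identify ${\rm Hom}_A(A/I,A)\cong(0:I)$ (the paper obtains this as the $J=0$ case of Lemma~\ref{cycl_hom}, while you verify it directly --- a trivial difference), concluding that the image of $\mathcal{P}$ under $f\mapsto f(1+I)$ is exactly $((0:I)+J)/J$.
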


\begin{proof} 
It is sufficient to prove that the image of $\mathcal{P}(A/I,A/J)$ under the isomorphism  ${\rm Hom}_A(A/I,A/J)\cong \frac{(J:I)}{J}$  is equal to  $\frac{(0:I)+J}{J}.$ In other words, it is sufficient to prove that 
$f\in \mathcal{P}(A/I,A/J)$ if and only  if  $f=(c\:\cdot)$ where $c\in(0:I).$

We know that $f\in \mathcal{P}(A/I,A/J)$ if and only if $f$ is presented as a composition  $f=\sigma g,$ where $g:A/I\to A$ is a homomorphism and $\sigma : A \twoheadrightarrow A/J$ is the canonical projection. The canonical projection $\sigma$ can be written as  $\sigma=(1\cdot).$ Using the last lemma we obtain that any homomorphism $g:A/I\to A$ can be presented in the form $g=(c\:\cdot),$ where $c\in (0:I).$ Thus,  $f\in \mathcal{P}(A/I,A/J)$ if and only if $f$ can be presented in the form $f=(1\cdot)(c\:\cdot)=(c\:\cdot),$ where $c\in (0:I).$ 
\end{proof}

Let $\varphi$ be an automorphism of an algebra $A.$ We define the endofunctor $\mathfrak{r}_\varphi$ on the category ${\rm cycl}_0\text{-}A$ as follows: $\mathfrak{r}_\varphi (A/I)=A/\varphi^{-1}(I);$ $\mathfrak{r}_\varphi ((c\:\cdot))=(\varphi^{-1}(c)\cdot).$

\begin{lemma} Let $\varphi$ be an automorphism of an algebra $A.$ Then there is an isomorphism
${\rm res}^{\rm cycl}_\varphi\circ\iota\cong\iota\circ \mathfrak{r}_\varphi.$
$$\xymatrix{
{\rm cycl}_0\text{-}A\ar[rr]^\iota_{\simeq}\ar[d]^{\mathfrak{r}_\varphi} & & {\rm cycl}\text{-}A\ar[d]^{{\rm res}^{\rm cycl}_\varphi} \\
{\rm cycl}_0\text{-}A\ar[rr]^\iota_{\simeq} & & {\rm cycl}\text{-}A 
}$$
\end{lemma}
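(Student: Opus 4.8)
The plan is to construct an explicit natural isomorphism $\eta\colon \iota\circ\mathfrak{r}_\varphi \Rightarrow {\rm res}^{\rm cycl}_\varphi\circ\iota$ directly out of $\varphi$. First I would evaluate both composites on a typical object $A/I$ of ${\rm cycl}_0\text{-}A$: the left-hand one yields $A/\varphi^{-1}(I)$, whereas the right-hand one yields $(A/I)_\varphi$, i.e. the same underlying space $A/I$ carrying the twisted action $\bar a * b=\overline{a\varphi(b)}$. Thus the entire content of the lemma reduces to producing, naturally in $I$, an isomorphism $A/\varphi^{-1}(I)\xrightarrow{\sim}(A/I)_\varphi$.

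For the component at $A/I$ I would take the map induced by $\varphi$ itself, $\eta_{A/I}\colon A/\varphi^{-1}(I)\to (A/I)_\varphi$, $\overline c\mapsto\overline{\varphi(c)}$. It is well defined because $\varphi(\varphi^{-1}(I))=I$, and bijective since $\varphi\in{\rm Aut}(A)$. To see that it respects the module structures I would check $\eta_{A/I}(\overline c\cdot b)=\overline{\varphi(cb)}=\overline{\varphi(c)\varphi(b)}=\overline{\varphi(c)}* b=\eta_{A/I}(\overline c)* b$, where the decisive step is that multiplicativity of $\varphi$ turns the ordinary action on the source into precisely the $\varphi$-twisted action $*$ on the target.

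It then remains to verify naturality against an arbitrary morphism $(c\,\cdot)\colon A/I\to A/J$ with $c\in(J:I)$. Recalling $\mathfrak{r}_\varphi((c\,\cdot))=(\varphi^{-1}(c)\,\cdot)$, chasing $\overline a$ down-then-right gives $\overline a\mapsto\overline{\varphi^{-1}(c)a}\mapsto\overline{\varphi(\varphi^{-1}(c)a)}=\overline{c\,\varphi(a)}$, while right-then-down gives $\overline a\mapsto\overline{\varphi(a)}\mapsto\overline{c\,\varphi(a)}$ via $\eta_{A/I}$ followed by the underlying map of ${\rm res}_\varphi((c\,\cdot))$; the two agree, so the square commutes. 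Along the way one notes $\varphi^{-1}(c)\in(\varphi^{-1}(J):\varphi^{-1}(I))$, which is immediate from $cI\subseteq J$ and the multiplicativity of $\varphi^{-1}$.

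I do not anticipate a genuine obstacle: the assertion is essentially a bookkeeping identity, and the only thing demanding care is the placement of $\varphi$ versus $\varphi^{-1}$ together with the twisted action defining $(A/I)_\varphi$. The one conceptual point worth isolating is that $\varphi$, regarded as an automorphism of the regular representation, is exactly the intertwiner between the plain $A$-action on $A/\varphi^{-1}(I)$ and the twisted action on $A/I$; once this is observed, both well-definedness and naturality fall out of $\varphi(xy)=\varphi(x)\varphi(y)$.
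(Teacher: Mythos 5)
Your proof is correct and follows essentially the same route as the paper: the paper's proof uses exactly the same component isomorphism $a+\varphi^{-1}(I)\mapsto \varphi(a)+I$ from $A/\varphi^{-1}(I)$ to $(A/I)_\varphi$ and then asserts commutativity of the naturality square against $(\varphi^{-1}(c)\,\cdot)$ and $(c\,\cdot)$, leaving the verification to the reader. Your diagram chase $\overline a\mapsto \overline{c\,\varphi(a)}$ along both paths is precisely that omitted verification, carried out correctly.
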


\begin{proof}
Let $I$ be a right ideal of the algebra $A$. Then the map  $a+\varphi^{-1}(I)\mapsto \varphi(a)+I$ is an isomorphism of the modules
$A/\varphi^{-1}(I) \cong (A/I)_{\varphi}.$
It is sufficient to verify that for any right ideals  $I,J$ and for any $c\in (J:I)$ the following diagram is commutative.
$$\xymatrix{
A/\varphi^{-1}(I)\ar[rr]^{(\varphi^{-1}(c)\cdot)}\ar[d]^{\cong} & & A/\varphi^{-1}(J) \ar[d]^{\cong}\\
(A/I)_{\varphi}\ar[rr]^{(c\:\cdot)}  && (A/J)_{\varphi}
}$$ We leave this verification to the reader.
\end{proof}

\subsection{Automorphisms acting trivially on the stable cyclic module category.}

If $\varphi$ is a stably inner automorphism, the restricted functor  $\underline{\rm res}^{\rm cycl}_\varphi$ is isomorphic to the identity functor. Thus, we have the following inclusions of groups.
\begin{center}
$ \left\{ \begin{matrix}
\text{inner modulo socle} \\
\text{automorphisms}  \end{matrix} \right\}\hspace{10pt} \subseteq\hspace{10pt} \underline{\rm Inn}(A) \hspace{10pt}\subseteq\hspace{10pt}
\left\{ \begin{matrix}
\text{automorphisms} 
 \\ { \varphi : \underline{\rm res}^{\rm cycl}_\varphi\cong {\rm Id}}  \end{matrix} \right\} $
\end{center}

We are interested in the middle group, but it is difficult to describe this group explicitly. The left group is clear. In the following theorem we describe the right group. It will give us some properties of stably inner automorphisms. 

\begin{theorem}\label{stable_inner_theorem} 
Let $A$ be a self-injective algebra without semisimple blocks and $\varphi\in {\rm Aut}(A).$ Then  $\underline{\rm res}^{\rm cycl}_\varphi\cong {\rm Id}$ if and only if there exists a family of invertable elements $\{\xi_I\}$ of the algebra $A,$ where $I$ runs over the the set of all  proper non-zero right ideals, such that the following properties hold. 
\begin{enumerate}
\item $\varphi(I)= \xi_II.$
\item  
$a\in (J:I) \ \Rightarrow \  \varphi(a)\xi_I-\xi_Ja\in (0:I)+\varphi(J),$  for all possible $I,J$.
\end{enumerate} Moreover, if we replace the family  $\{\xi_I\}$ by a family of invertible elements  $\{\xi'_I\}$ such that $\xi_I-\xi'_I\in  (0:I)+\varphi(I),$ then the properties 1 and 2 hold for this family. 
\end{theorem}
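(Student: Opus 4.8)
The plan is to translate the functorial condition into explicit computations with the maps $(c\,\cdot)$ of Lemma \ref{cycl_hom} and their stable classes from Proposition \ref{cycl_sthom}. Since $\iota$ is an equivalence, the preceding lemma lets me replace $\underline{\rm res}^{\rm cycl}_\varphi$ by $\underline{\mathfrak r}_\varphi$, so that $\underline{\rm res}^{\rm cycl}_\varphi\cong{\rm Id}$ becomes the existence of a natural isomorphism $\Phi\colon\underline{\mathfrak r}_\varphi\Rightarrow{\rm Id}$ given by stable isomorphisms $\Phi_I\colon A/\varphi^{-1}(I)\to A/I$. Now every object is of the form $A/(\text{ideal})$ and every morphism of the form $(c\,\cdot)$, composed by $(c_1\cdot)\circ(c_2\cdot)=(c_1c_2\cdot)$, so Lemma \ref{cycl_hom} and Proposition \ref{cycl_sthom} apply verbatim. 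The bridge between an invertible $\xi_I$ and a component will be $\xi_I\mapsto\Phi_I:=(\varphi^{-1}(\xi_I)\,\cdot)$; property~1, $\varphi(I)=\xi_II$, is exactly what gives $\varphi^{-1}(\xi_I)\varphi^{-1}(I)=\varphi^{-1}(\xi_II)=I$, making $\Phi_I$ a well-defined isomorphism.

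For the direction $(\Leftarrow)$, I start from a family $\{\xi_I\}$ with properties 1 and 2 and set $\Phi_I=(\varphi^{-1}(\xi_I)\,\cdot)$; by the previous remark each $\Phi_I$ is an isomorphism, so only naturality remains. Writing out the naturality square for an arbitrary $(a\,\cdot)\colon A/I\to A/J$ (with $a\in(J:I)$), the two composites $A/\varphi^{-1}(I)\to A/J$ are $(\varphi^{-1}(\xi_Ja)\,\cdot)$ and $(a\varphi^{-1}(\xi_I)\,\cdot)$, so by Proposition \ref{cycl_sthom} the square commutes if and only if $\varphi^{-1}(\xi_Ja)-a\varphi^{-1}(\xi_I)\in(0:\varphi^{-1}(I))+J=\varphi^{-1}\bigl((0:I)+\varphi(J)\bigr)$. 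Applying $\varphi$ turns this into $\xi_Ja-\varphi(a)\xi_I\in(0:I)+\varphi(J)$, which is precisely property 2. Hence $\{\Phi_I\}$ is a natural isomorphism and $\underline{\rm res}^{\rm cycl}_\varphi\cong{\rm Id}$.

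For the direction $(\Rightarrow)$, I fix a natural isomorphism with components $\Phi_I$ and must manufacture the units. Because $\underline{\rm res}^{\rm cycl}_\varphi\cong{\rm Id}$, every simple $S$ (being cyclic) satisfies $S\cong S_\varphi$ in $\underline{\rm mod}\text{-}A$, and as $A$ has no semisimple blocks $S$ is non-projective, so $S\cong S_\varphi$ as modules; Lemma \ref{horoshiy_vibor_isomorphisma} then lifts each $\Phi_I$ to an actual isomorphism $A/\varphi^{-1}(I)\cong A/I$. The crux is the object-level fact that such an isomorphism forces $I=\xi\,\varphi^{-1}(I)$ for a \emph{unit} $\xi$, with $\xi$ chosen so that $(\xi\,\cdot)$ is the given lift; setting $\xi_I:=\varphi(\xi)$ yields $\varphi(I)=\xi_II$ (property 1) and $(\varphi^{-1}(\xi_I)\,\cdot)=(\xi\,\cdot)=\Phi_I$. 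When $I\subseteq{\rm rad}\,A$ this is immediate, since $A\twoheadrightarrow A/I$ and $A\twoheadrightarrow A/\varphi^{-1}(I)$ are then projective covers and the isomorphism lifts to an automorphism of $A_A$, i.e.\ left multiplication by a unit. In general I pass to the projective covers $eA\to A/I$ and $fA\to A/\varphi^{-1}(I)$: the isomorphism lifts to $eA\cong fA$, Krull--Schmidt cancellation in $A=eA\oplus(1-e)A=fA\oplus(1-f)A$ gives $(1-e)A\cong(1-f)A$ as well, and assembling the two block isomorphisms (one on $eA$ carrying the cover kernel correctly, one on the complement) produces a genuine unit realizing the lift. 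Feeding $\{\xi_I\}$ back through the naturality computation of the previous paragraph, now read in reverse, yields property 2, while property 1 is built in.

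The main obstacle is exactly this object-level step: extracting an \emph{invertible} $\xi_I$ (not merely some $c_I$ with $c_II\subseteq\varphi^{-1}(I)$ defining the map) for \emph{all} proper nonzero ideals, including those not contained in the radical, where $A/I$ may acquire projective summands and the naive lift along $A\twoheadrightarrow A/\varphi^{-1}(I)$ need not be invertible; the projective-cover plus Krull--Schmidt argument above is what overcomes it. Finally, for the ``moreover'' clause I replace $\xi_I$ by an invertible $\xi_I'$ with $\xi_I-\xi_I'\in(0:I)+\varphi(I)$ and check that properties 1 and 2 persist. Writing $\xi_I-\xi_I'=c+d$ with $c\in(0:I)$ and $d\in\varphi(I)=\xi_II$, one has $cI=0$ and $dI\subseteq\xi_II$, so $\xi_I'I\subseteq\xi_II=\varphi(I)$, with equality by comparing dimensions; this is property 1. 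Property 2 follows by a direct check that the four perturbation terms $\varphi(a)c$, $\varphi(a)d$, $(\xi_J-\xi_J')a$ (split as above) all lie in $(0:I)+\varphi(J)$, using $aI\subseteq J$ together with $\varphi(a)\varphi(I)=\varphi(aI)\subseteq\varphi(J)$ and $(c'a)I=c'(aI)\subseteq c'J$; hence properties 1 and 2 pass to the new family.
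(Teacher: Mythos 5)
Your proof is correct, and its overall skeleton coincides with the paper's: reduce to the functor $\mathfrak{r}$ via the commutation lemma, encode the components as stable classes $\underline{(c\,\cdot)}$ using Lemma \ref{cycl_hom} and Proposition \ref{cycl_sthom}, observe that naturality is literally condition 2 (your version is the paper's conjugated by $\varphi$, since you work with $\mathfrak{r}_\varphi$ and components landing in ${\rm Id}$, while the paper works with $\mathfrak{r}_{\varphi^{-1}}$ and components leaving ${\rm Id}$; the two data are equivalent, and your identity $(0:\varphi^{-1}(I))+J=\varphi^{-1}\bigl((0:I)+\varphi(J)\bigr)$ checks out), and use Lemma \ref{horoshiy_vibor_isomorphisma} --- with the no-semisimple-blocks hypothesis guaranteeing $S\cong S_\varphi$ for simples, which you verify more explicitly than the paper does --- to lift stable components to genuine module isomorphisms. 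The one genuinely different step is exactly the crux you flagged: extracting an \emph{invertible} representative. The paper derives $\tilde\xi_I A+\varphi(I)=A$ from the existence of a stable inverse and then quotes \cite[Ch.~III, 2.8]{Bass} (lifting of units, i.e., stable range one for semilocal rings) to find a unit in $\tilde\xi_I+\varphi(I)$; you instead re-prove this fact by hand, splitting $A=P_1\oplus P_2$ with $P_1\to A/\varphi^{-1}(I)$ a projective cover and $P_2$ inside the kernel (likewise for $A/I$), lifting the isomorphism along the covers, matching complements by Krull--Schmidt, and assembling an automorphism of $A_A$, i.e., left multiplication by a unit $u$ with $u\,\varphi^{-1}(I)=I$. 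This buys self-containedness, at the price of one standard input you use only implicitly: over a semiperfect ring, every epimorphism from a projective admits a decomposition whose projective-cover summand restricts the given projection and whose complement lies in the kernel --- without adapting the decompositions to the projections $A\twoheadrightarrow A/I$ and $A\twoheadrightarrow A/\varphi^{-1}(I)$, the assembled unit would not realize the given map, so this lemma deserves an explicit statement. Finally, for the ``moreover'' clause the paper argues functorially (the replacement leaves the stable classes $\underline{(\xi_I\cdot)}$, hence the natural transformation, unchanged), whereas your direct verification of the perturbation terms via $(0:I)$ being a left ideal, $\varphi(J)$ a right ideal, and $\varphi(a)\varphi(I)\subseteq\varphi(J)$ is equally valid, if more computational.
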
 
\begin{remark}
For any inner automorphism $\varphi(a)=\xi a\xi^{-1},$ we may take $\xi_I:=\xi.$
\end{remark}
\begin{proof}[Proof of theorem \ref{stable_inner_theorem}]
The functor $\underline{\rm res}^{\rm cycl}_\varphi$ is isomorphic to the identity functor if and only if the functor $\underline{\mathfrak{r}}_{\varphi^{-1}}=\underline{\mathfrak{r}}^{-1}_{\varphi}\cong \underline{\rm res}^{\rm cycle}_{\varphi^{-1}}$ is isomorphic to the identity functor. Let  $f: {\rm Id}\to \underline{\mathfrak{r}}_{\varphi^{-1}}$ be a morphism of functors. Then its components  $f_{A/I}:A/I\to A/\varphi(I)$ can be written in the form  $f_{A/I}=\underline{(\xi_I\cdot)},$ where $\xi_I\in (\varphi(I):I).$ From the commutativity of the diagram 
$$\xymatrix{
A/I\ar[rr]^{(a\cdot)}\ar[d]^{(\xi_I\cdot)} & & A/J\ar[d]^{(\xi_J\cdot)}\\
A/\varphi(I)\ar[rr]^{(\varphi(a)\cdot)} & & A/\varphi(J)
}$$ for any $I,J,$  $a\in (J:I)$  and the proposition \ref{cycl_sthom}, it follows that $\varphi(a)\xi_I-\xi_Ja\in (0:I)+\varphi(J)$. Conversely, if conditions  $\xi_I\in (\varphi(I):I)$ and $\varphi(a)\xi_I-\xi_Ja\in (0:I)+\varphi(J)$ for any $a\in (J:I)$ hold, the family $\{\xi_I\}$ defines the morphism of functors $f: {\rm Id}\to \underline{\mathfrak{r}}_{\varphi^{-1}}.$ Moreover, if we replace the family  $\{\xi_I\}$ with a family  $\{\xi'_I\}$ such that $\xi_I-\xi'_I\in  (0:I)+\varphi(I),$ then  $f_{A/I}=\underline{(\xi_I\cdot)}=\underline{(\xi'_I\cdot)}$ and the family  $\{\xi'_I\}$ defines the same morphism of functors.

If the element $\xi_I$ is invertible, the condition $\xi_I\in (\varphi(I):I)$ is equivalent to   $\varphi(I)=\xi_II.$ It is sufficient to prove that a morphism of functors $f$ is an isomorphism if and only if we can choose invertible elements $\xi_I$ such that $f_{A/I}=\underline{(\xi_I\cdot)}$. 

If elements $\xi_I\in (\varphi(I):I)$ are invertible, then $\xi_II=\varphi(I)$, hence, $I=\xi_I^{-1}\varphi(I),$ and $\xi_I^{-1}\in (I:\varphi(I)).$ Thus, we get morphisms  $(\xi_I^{-1}\cdot):A/\varphi(I)\to A/I,$ which are inverse to morphisms  $f_{A/I}.$ Hence, $f$ is an isomorphism.

Let now $f$ be an isomorphism. From the lemma \ref{horoshiy_vibor_isomorphisma} it follows that we can choose $\{\tilde\xi_I\}$ such that  $f_{A/I}=\underline{(\tilde\xi_I\cdot)}$ and  that  $(\tilde\xi_I\cdot):A/I\to A/\varphi(I)$ is an isomorphism in the category ${\rm mod\text{-}} A.$ Using the lemma \ref{cycl_hom}, we obtain that there exist  $\lambda_I\in (I:\varphi(I))$ such that  $\tilde\xi_I\lambda_I-1\in \varphi(I).$ Hence, we get an equality  $\varphi(I)+\tilde\xi_IA=A.$ From  \cite[Ch III, 2.8]{Bass} it follows that there exist invertible elements  $\xi_I\in\varphi(I)+\tilde\xi_I$. Thus, we have found invertible elements  $\xi_I$ such that  $f_{A/I}=\underline{(\xi_I\cdot)}.$
\end{proof}

\begin{corollary}\label{cor_st_in}
Let $A$ be a self-injective algebra without semisimple blocks, and let $I$ be a two-sided ideal of $A$ and $\varphi\in\underline{\rm Inn}(A).$ Then  $\varphi(I)=I,$ and  $\varphi$ induces an inner automorphism on the quotient algebra $A/((0:I)+I).$
\end{corollary}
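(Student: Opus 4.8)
The plan is to read everything off Theorem~\ref{stable_inner_theorem}, whose hypothesis is met: a stably inner $\varphi$ satisfies $\underline{\rm res}_\varphi\cong{\rm Id}_{{\rm \underline{mod}\text{-}} A}$, hence a fortiori $\underline{\rm res}^{\rm cycl}_\varphi\cong{\rm Id}$, so the theorem produces a family of invertible elements $\{\xi_K\}$ (one for each proper nonzero right ideal $K$) satisfying $\varphi(K)=\xi_K K$ together with compatibility condition~(2). The extreme cases $I=0$ and $I=A$ are immediate, since then $(0:I)+I=A$ and the quotient $A/((0:I)+I)$ is zero; so I would assume $I$ is a proper nonzero two-sided ideal and single out the element $\xi:=\xi_I$. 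First I would prove $\varphi(I)=I$: condition~(1) gives $\varphi(I)=\xi I$, and since $I$ is a left ideal $\xi I\subseteq I$; as $\varphi$ is bijective, comparing dimensions forces $\varphi(I)=I$.

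Next I would check that the quotient and the induced map in the statement actually make sense. For a two-sided $I$ the left annihilator $(0:I)$ is again two-sided --- if $aI=0$ then $(ba)I=b(aI)=0$ and $(ab)I=a(bI)\subseteq aI=0$ --- so $(0:I)+I$ is a two-sided ideal and $A/((0:I)+I)$ is an algebra. Moreover $\varphi$ stabilizes it: I already have $\varphi(I)=I$, and from $aI=0\iff\varphi(a)\varphi(I)=\varphi(a)I=0$ one obtains $\varphi((0:I))=(0:I)$. Hence $\varphi$ descends to an automorphism $\overline\varphi$ of $\bar A:=A/((0:I)+I)$, and it remains only to see that $\overline\varphi$ is inner.

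For the final step I would specialize condition~(2) to $J=I$. Because $I$ is a left ideal we have $(I:I)=\{a\mid aI\subseteq I\}=A$, so condition~(2), using $\varphi(I)=I$, reads
\[
\varphi(a)\xi-\xi a\in(0:I)+I\qquad\text{for every }a\in A.
\]
Passing to $\bar A$ this says $\overline{\varphi(a)}\,\bar\xi=\bar\xi\,\bar a$, that is $\overline\varphi(\bar a)=\bar\xi\,\bar a\,\bar\xi^{-1}$; here $\bar\xi$ is invertible in $\bar A$ because $\xi$ is invertible in $A$ (the image of $\xi^{-1}$ provides the inverse). Thus $\overline\varphi$ is conjugation by $\bar\xi$, i.e. inner. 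I expect the only real care to be needed in the bookkeeping of the middle paragraph --- verifying that $(0:I)$ and $(0:I)+I$ are two-sided and $\varphi$-stable, and correctly specializing~(2) via the observation $(I:I)=A$ --- since the substantive content (the existence of the invertible $\xi_I$ and the congruence~(2)) is already supplied by Theorem~\ref{stable_inner_theorem}.
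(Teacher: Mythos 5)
Your proof is correct and takes essentially the same route as the paper's: invoke Theorem~\ref{stable_inner_theorem} to obtain the family $\{\xi_K\}$, observe $\varphi(I)=\xi_I I=I$, and specialize condition~(2) to $J=I$ using $(I:I)=A$, so that $\varphi$ becomes conjugation by the (invertible) image of $\xi_I$ in $A/((0:I)+I)$. The only difference is that you spell out routine details the paper leaves implicit --- the degenerate cases $I=0$ and $I=A$, and the two-sidedness and $\varphi$-stability of $(0:I)+I$ needed for the quotient and the induced map to make sense --- all of which you verify correctly.
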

\begin{proof}
Since $\varphi$ is a stably inner automorphism, there is a family $\{\xi_I\}$ satisfying conditions of the previous theorem. For a two-sided ideal $I$, we have  $\varphi(I)=\xi_II=I.$ Hence, from the condition $\varphi(a)\xi_I-\xi_Ia\in (0:I)+I$ for any $a\in (I:I)=A,$ it follows that $\varphi$ induces an inner automorphism on $A/((0:I)+I)$. 
\end{proof}

We denote by  ${\rm soc}^i({}_AA)$ the $i$-th term of the socle series of the left regular module ${}_AA$.

\begin{corollary}\label{cor_st_inn_loops} Let $\mathcal{Q}=(\mathcal{Q}_0,\mathcal{Q}_1,s,t)$ be a quiver,  $A= k \mathcal{Q}/I$ be a self-injective bound quiver algebra  such that  ${\rm soc}^2({}_AA) \subseteq {\rm rad}^2(A),$ and $\varphi\in \underline{\rm Inn}(A).$ Then there exists a family $\{d_i\}_{i\in \mathcal{Q}_0}$ of non-zero elements in the ground field $k$ such that  $$\varphi(e_i)-e_i \in {\rm rad}(A) \hspace{10pt} \text{ and } \hspace{10pt} \varphi(\alpha) - \frac{d_{t(\alpha)}}{d_{s(\alpha)}}\alpha\in  {\rm rad}^2(A)$$  for all $i\in \mathcal{Q}_0, \alpha\in \mathcal{Q}_1.$  In particular, if $\alpha$ is a loop, then $\varphi(\alpha)-\alpha \in {\rm rad}^2(A).$
\end{corollary}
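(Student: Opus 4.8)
The plan is to work entirely with the two characteristic two-sided ideals ${\rm rad}={\rm rad}(A)$ and ${\rm rad}^2={\rm rad}^2(A)$ and to feed them into Theorem \ref{stable_inner_theorem}. First I would dispose of the vertices. Since $A=k\mathcal{Q}/I$ is basic, $A/{\rm rad}\cong k^{|\mathcal{Q}_0|}$ is commutative, so its only inner automorphism is the identity. Applying Corollary \ref{cor_st_in} to the ideal ${\rm rad}$ — for which $(0:{\rm rad})={\rm soc}(A)\subseteq{\rm rad}$, so that $(0:{\rm rad})+{\rm rad}={\rm rad}$ — shows that $\varphi$ induces the identity on $A/{\rm rad}$, i.e. $\varphi(e_i)-e_i\in{\rm rad}$ for all $i$. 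In particular, for an arrow written as $\alpha=e_{t(\alpha)}\alpha e_{s(\alpha)}$, the class $\overline{\varphi(\alpha)}\in{\rm rad}/{\rm rad}^2$ satisfies $\overline{\varphi(\alpha)}=e_{t(\alpha)}\overline{\varphi(\alpha)}e_{s(\alpha)}$, because $\varphi(\alpha)=\varphi(e_{t(\alpha)})\varphi(\alpha)\varphi(e_{s(\alpha)})\equiv e_{t(\alpha)}\varphi(\alpha)e_{s(\alpha)}\pmod{{\rm rad}^2}$.

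Next I would produce the scalars. As $\varphi\in\underline{\rm Inn}(A)$ we have $\underline{\rm res}^{\rm cycl}_\varphi\cong{\rm Id}$, so Theorem \ref{stable_inner_theorem} supplies invertible elements $\xi_{\rm rad}$ and $\xi_{{\rm rad}^2}$ (both ${\rm rad}$ and ${\rm rad}^2$ are proper nonzero right ideals; note ${\rm rad}^2\neq 0$, since ${\rm rad}^2=0$ would force ${\rm soc}^2({}_AA)=0$ and hence $A=0$). The decisive step is to apply condition (2) of that theorem to $I={\rm rad}$, $J={\rm rad}^2$, $a=\alpha$, which is legitimate because $\alpha\,{\rm rad}\subseteq{\rm rad}^2$. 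Its right-hand error term is $(0:{\rm rad})+\varphi({\rm rad}^2)$, and here the hypothesis enters crucially: $(0:{\rm rad})={\rm soc}(A)\subseteq{\rm soc}^2({}_AA)\subseteq{\rm rad}^2$ and $\varphi({\rm rad}^2)={\rm rad}^2$, so the whole error term lies in ${\rm rad}^2$ and vanishes modulo ${\rm rad}^2$. Writing $\overline{\xi_{\rm rad}}\equiv\sum_k\nu_ke_k$ and $\overline{\xi_{{\rm rad}^2}}\equiv\sum_k\mu_ke_k$ in $A/{\rm rad}$, reduction of condition (2) modulo ${\rm rad}^2$ gives $\nu_{s(\alpha)}\,\overline{\varphi(\alpha)}=\mu_{t(\alpha)}\,\alpha$ in ${\rm rad}/{\rm rad}^2$. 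Since the $\nu_k,\mu_k$ are coordinates of units they are nonzero, so this already forces $\overline{\varphi(\alpha)}$ to be a scalar multiple of $\alpha$, with the same scalar for all parallel arrows.

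Finally I would check that the two families of coordinates coincide. For this I would apply condition (2) once more, now to $I={\rm rad}^2$, $J={\rm rad}$, $a=1$, obtaining $\xi_{{\rm rad}^2}-\xi_{\rm rad}\in(0:{\rm rad}^2)+\varphi({\rm rad})$. Since $(0:{\rm rad}^2)={\rm soc}^2({}_AA)\subseteq{\rm rad}$ — the identification of the left annihilator of the characteristic ideal ${\rm rad}^2$ with the second left socle uses that a basic self-injective algebra is Frobenius, so left and right annihilators of ${\rm rad}^2$ agree — and $\varphi({\rm rad})={\rm rad}$, we conclude $\xi_{{\rm rad}^2}\equiv\xi_{\rm rad}\pmod{\rm rad}$, i.e. $\mu_k=\nu_k$ for all $k$. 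Setting $d_i:=\nu_i\neq0$, the relation of the previous paragraph becomes $\overline{\varphi(\alpha)}=\frac{d_{t(\alpha)}}{d_{s(\alpha)}}\alpha$, which is the assertion; for a loop $t(\alpha)=s(\alpha)$ the scalar is $1$, giving $\varphi(\alpha)-\alpha\in{\rm rad}^2$. The main obstacle is precisely the choice of these two pairs of ideals: one must arrange that every annihilator and image term produced by condition (2) is absorbed into ${\rm rad}^2$ (respectively ${\rm rad}$), and this is exactly what the hypothesis ${\rm soc}^2({}_AA)\subseteq{\rm rad}^2$ guarantees through the identities $(0:{\rm rad})={\rm soc}(A)$ and $(0:{\rm rad}^2)={\rm soc}^2({}_AA)$; the remaining subtlety is the coincidence $\xi_{\rm rad}\equiv\xi_{{\rm rad}^2}$ modulo ${\rm rad}$, which is what upgrades "a scalar depending on the pair $(s(\alpha),t(\alpha))$" to the multiplicative normal form $d_{t(\alpha)}/d_{s(\alpha)}$.
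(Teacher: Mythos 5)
Your argument is correct, but it follows a genuinely different route from the paper's, which is shorter: the paper applies Corollary \ref{cor_st_in} just once, to the two-sided ideal ${\rm rad}^2(A)$. Since $(0:{\rm rad}^2(A))={\rm soc}^2({}_AA)\subseteq {\rm rad}^2(A)$, the relevant quotient is exactly $A/{\rm rad}^2(A)$, so $\varphi$ is implemented there by conjugation by a \emph{single} unit $\xi$; writing $\xi=\xi_0+r$ with $\xi_0=\sum_i d_ie_i$, $d_i\neq 0$, $r\in{\rm rad}(A)$, both congruences fall out simultaneously ($\varphi(\alpha)\equiv\xi_0\alpha\xi_0^{-1}=\frac{d_{t(\alpha)}}{d_{s(\alpha)}}\alpha \bmod {\rm rad}^2(A)$ because $\alpha\in{\rm rad}(A)$ absorbs the radical part of $\xi$, and $\varphi(e_i)\equiv e_i \bmod {\rm rad}(A)$ because $\xi_0$ commutes with $e_i$). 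You instead go back to the raw naturality condition (2) of Theorem \ref{stable_inner_theorem} with the mixed pairs $({\rm rad},{\rm rad}^2)$, $a=\alpha$, and $({\rm rad}^2,{\rm rad})$, $a=1$; all your steps check out, and your annihilator identities $(0:{\rm rad}(A))={\rm soc}(A)$ and $(0:{\rm rad}^2(A))={\rm soc}^2({}_AA)$ (the latter via Frobenius annihilator duality, which the paper asserts without comment — you are actually more careful here) are exactly the absorption facts both proofs rest on. The price of your route is the extra matching step: your scalars $\nu_k$ and $\mu_k$ come from two distinct units $\xi_{\rm rad}$ and $\xi_{{\rm rad}^2}$, so you must reconcile them through the projection $A/{\rm rad}^2(A)\to A/{\rm rad}(A)$, whereas in the paper's argument the matching is automatic since everything comes from one $\xi$; in effect you re-derive by hand, for the specific morphisms $(\alpha\,\cdot)$ and $(1\cdot)$, what Corollary \ref{cor_st_in} packages when fed the single ideal ${\rm rad}^2(A)$. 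One small point you should state explicitly before invoking Corollary \ref{cor_st_in} or Theorem \ref{stable_inner_theorem} at all: the hypothesis ${\rm soc}^2({}_AA)\subseteq{\rm rad}^2(A)$ implies that $A$ has no semisimple blocks (a semisimple block would put a primitive idempotent into ${\rm soc}^2({}_AA)\setminus{\rm rad}(A)$), which is a standing assumption of both results; the paper records this in the first sentence of its proof.
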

\begin{proof} The property ${\rm soc}^2({}_AA) \subseteq {\rm rad}^2(A)$ implies that the algebra $A$ has no semisimple blocks. 
Notice that  $(0:{\rm rad}^2(A))={\rm soc}^2({}_AA),$  and hence, $(0:{\rm rad}^2(A))+{\rm rad}^2(A)={\rm rad}^2(A).$ Using the previous corollary, we obtain that the automorphism $\varphi$ induces the inner automorphism $a\mapsto \xi a \xi^{-1}$  on the quotient algebra $A/{\rm rad}^2(A).$ The invertible element  $\xi$ has the form $\xi=\xi_0+r,$ where $\xi_0=\sum_{i\in \mathcal{Q}_0}d_ie_i,$ $d_i\in  k\setminus \{0\}$ and $r\in {\rm rad}(A).$ Then we get  $\varphi(\alpha)-\xi_0\alpha\xi_0^{-1} \in {\rm rad}^2(A)$ for any arrow $\alpha$. And since $e_i\xi_0=\xi_0e_i,$ we obtain $\varphi(e_i)-e_i\in{\rm rad}(A).$ This completes the proof.
\end{proof}

\subsection{Spectroid point of view.}

Recall that spectroid is a (small) $ k$-linear hom-finite category  $\mathcal{S}$ such that its different objects are not isomorphic and the algebra of endomorphisms  ${\mathcal{S}}(s,s)$ is local for any  $s\in \mathcal{S}$. For any spectroid $\mathcal{S}$ we consider the algebra  $A(\mathcal{S})=\bigoplus_{s,t\in \mathcal{S}} \mathcal{S}(s,t).$ If $\mathcal{S}$ has finitely many objects, $A(\mathcal{S})$ is a unital algebra  and  $1=\sum_{s\in \mathcal{S}} {\rm id}_s.$

Let  $\mathcal{T}$ be a full subspectroid of $\mathcal{S}$. For any two objects $s,s'\in {\rm Ob}(\mathcal{S})$ we denote by $I_{\mathcal{T}}(s,s')$ the subset  $\mathcal{S}(s,s')$ consisting of morphisms of the form  $\sum f_i,$ where $f_i$ can be presented as a composition $s\to t_i\to s',$ for some $t_i\in \mathcal{T}.$ 
Then denote by $\mathcal{S}/\mathcal{T}$ the category such that  ${\rm Ob}(\mathcal{S}/\mathcal{T})={\rm Ob}(\mathcal{S})\setminus {\rm Ob}(\mathcal{T}),$ and $(\mathcal{S}/\mathcal{T})(s,s')=\mathcal{S}(s,s')/I_{\mathcal{T}}(s,s').$ It is easy to verify  that the composition is well-defined and $\mathcal{S}/\mathcal{T}$ is a spectroid.

For a translation quiver $\Gamma=(\mathcal{Q},\tau, \sigma)$ we denote by  $ k(\Gamma)$ the mesh-spectroid (mesh-category) of $\Gamma$ (see \cite{Bongartz--Gabriel(1982)}).

Let $A$ be a finite dimensional algebra. Denote by ${\rm G}(A)$ the full subcategory of ${\rm mod\text{-}} A,$ whose objects form a complete set of nonisomorphic projective indecomposable modules. It is clear that ${\rm G}(A)$ is a spectroid. If $A$ is a basic algebra, than $A({\rm G}(A))\cong A.$ We denote by  ${\rm ind}(A)$ the full subcategory of ${\rm mod\text{-}} A$, whose objects form a complete set of nonisomorphic indecomposable modules. We will always assume that ${\rm G}(A)\subset {\rm ind}(A).$  We denote by $\underline{\rm ind}(A)$ the full subcategory of ${\rm \underline{mod}\text{-}} A$, whose objects form a complete set of nonisomorphic nonprojective  indecomposable modules. In other words,  $\underline{\rm ind}(A)={\rm ind}(A)/{\rm G}(A).$

Let $A$ be a basic algebra and  $e_1,\dots, e_n$ be a complete set of primitive orthogonal idempotents.  Then  $\{P_i=e_iA\mid  1 \leq i\leq n\}$ is a complete set of nonisomorphic projective indecomposable modules, and we will assume that ${\rm Ob}({\rm G}(A))=\{P_i\mid  1 \leq i\leq n\}.$ It is clear that  ${\rm G}(A)(P_i,P_j)\cong e_jAe_i.$ Let  $\varphi$ be an automorphism such that $\varphi(\{e_1,\dots, e_n\})=\{e_1,\dots, e_n\}.$ For the sake of simplicity,  the corresponding permutation on the set $\{1,\dots, n\}$ we denote by the same symbol $\varphi.$ Then $\varphi(e_i)=e_{\varphi(i)}.$ For a such $\varphi$ we define the autofunctor ${\rm G}(\varphi):{\rm G}(A)\to {\rm G}(A)$  by the formulas  ${\rm G}(\varphi)(P_i)=P_{\varphi^{-1}(i)}$ and ${\rm G}(\varphi)(a\cdot)=\varphi^{-1}(a)\cdot.$ 

An autofunctor $\Phi:{\rm ind}(A)\to {\rm ind}(A)$ is called an {\it extension of} ${\rm G}(\varphi)$ if  $\Phi({\rm G}(A))={\rm G}(A)$ and $\Phi|_{{\rm G}(A)}={\rm G(\varphi)}.$ Such a functor induces an autofunctor on $\underline{\rm ind}(A)={\rm ind}(A)/{\rm G}(A)$ which we denote by  $\underline{\Phi}.$

\begin{lemma}
Let $A$ be a basic algebra,  $e_1,\dots, e_n$ be a complete set of primitive orthogonal idempotents, $\varphi$ be an automorphism of $A$ such that $\varphi(\{e_1,\dots, e_n\})=\{e_1,\dots, e_n\}.$ Then the following conditions hold.
\begin{enumerate}
\item There exists an extension $\Phi: {\rm ind}(A) \to {\rm ind}(A) $ of  ${\rm G}(\varphi)$.
\item If $\Phi$ is an extension of the functor  ${\rm G}(\varphi)$ on ${\rm ind}(A)$, then  $\iota \circ \Phi\cong {\rm res}_{\varphi} \circ \iota.$
$$\xymatrix{
{\rm ind}(A)\ar[rr]^{\iota}\ar[d]^{\Phi} & & {\rm mod\text{-}} A\ar[d]^{{\rm res}_{\varphi}} \\
{\rm ind}(A)\ar[rr]^{\iota} & & {\rm mod\text{-}} A
}$$
\item If $\Phi$ and $\Phi'$ are two extensions of ${\rm G}(\varphi)$ on ${\rm ind}(A)$, there is an isomorphism of functors  $\eta:\Phi\overset{\cong}{\to} \Phi'$ such that \hbox{$\eta_{P_i}={\rm id}_{P_i}.$}
\end{enumerate}
\end{lemma}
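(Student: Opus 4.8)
The plan is to reduce everything to a single computation: on the subcategory ${\rm G}(A)$ the restriction functor ${\rm res}_\varphi$ coincides, up to a canonical natural isomorphism, with ${\rm G}(\varphi)$. Since $\varphi(e_i)=e_{\varphi(i)}$, I would first check that the map $x\mapsto\varphi(x)$ is an isomorphism of right modules $P_{\varphi^{-1}(i)}=e_{\varphi^{-1}(i)}A\to(e_iA)_\varphi=(P_i)_\varphi$, using only $\varphi(e_{\varphi^{-1}(i)})=e_i$ and $\varphi(xa)=\varphi(x)\varphi(a)$. A short diagram chase then shows that for a morphism $(c\,\cdot):P_i\to P_j$ these maps intertwine ${\rm res}_\varphi((c\,\cdot))=(c\,\cdot)$ with ${\rm G}(\varphi)((c\,\cdot))=(\varphi^{-1}(c)\,\cdot)$. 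In other words the family $\{\varphi\}_i$ is a natural isomorphism $\iota\circ{\rm G}(\varphi)\xrightarrow{\cong}{\rm res}_\varphi\circ\iota$ of functors ${\rm G}(A)\to{\rm mod\text{-}} A$. This is the computational heart; everything else is formal.

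For (1) I would build $\Phi$ by transport of structure along the autoequivalence ${\rm res}_\varphi$ (its inverse is ${\rm res}_{\varphi^{-1}}$). As an equivalence it preserves indecomposability, so for each $M\in{\rm ind}(A)$ the module $M_\varphi$ is isomorphic to a unique object of ${\rm ind}(A)$, which I declare to be $\Phi(M)$; for $M=P_i$ the preceding computation forces $\Phi(P_i)=P_{\varphi^{-1}(i)}$ and I take the transport isomorphism $\theta_{P_i}$ to be $\varphi^{-1}$. Setting $\Phi(g)=\theta_N\circ{\rm res}_\varphi(g)\circ\theta_M^{-1}$ makes $\Phi$ a functor, and the naturality square above guarantees $\Phi|_{{\rm G}(A)}={\rm G}(\varphi)$ on the nose (not merely up to isomorphism), so $\Phi({\rm G}(A))={\rm G}(A)$ and $\Phi$ is an extension; by construction $\theta$ is already a natural isomorphism ${\rm res}_\varphi\circ\iota\cong\iota\circ\Phi$.

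For (2) and (3) I would use that a $k$-linear self-equivalence $\Phi$ of ${\rm ind}(A)$ extends, uniquely up to isomorphism, to an additive self-equivalence $\hat\Phi$ of ${\rm mod\text{-}} A={\rm add}\,{\rm ind}(A)$, and that a self-equivalence of the abelian category ${\rm mod\text{-}} A$ is automatically exact (it preserves kernels and cokernels as categorical (co)limits). Because $\Phi$ restricts to ${\rm G}(\varphi)$ on ${\rm G}(A)$, the functor $\hat\Phi$ restricts on projectives to the additive extension of ${\rm G}(\varphi)$, which by the first paragraph is naturally isomorphic to ${\rm res}_\varphi$ on projectives. Now I would invoke the standard comparison: two right-exact additive endofunctors of ${\rm mod\text{-}} A$ that are naturally isomorphic on projectives are naturally isomorphic (compute both on a minimal projective presentation $P_1\to P_0\to M\to 0$ and pass to cokernels). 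Applied to $\hat\Phi$ and ${\rm res}_\varphi$ this gives $\hat\Phi\cong{\rm res}_\varphi$, and restricting along $\iota$ yields (2) for the arbitrary extension $\Phi$. For (3), given two extensions $\Phi,\Phi'$ I would arrange their additive extensions to coincide on ${\rm add}\,{\rm G}(A)$ (both are built from the single functor ${\rm G}(\varphi)$ on ${\rm G}(A)$), so that the identity transformation on projectives extends, by the uniqueness half of the same comparison, to a natural isomorphism $\eta\colon\hat\Phi\to\hat\Phi'$ with $\eta_P={\rm id}$ for every projective $P$; restricting to ${\rm ind}(A)$ gives the required $\eta$ with $\eta_{P_i}={\rm id}$.

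The main obstacle is the passage in (2)--(3) from data living only on indecomposables to the honest module-theoretic functor ${\rm res}_\varphi$: one must justify that $\Phi$ has a well-defined additive (hence exact) extension $\hat\Phi$ and must have the comparison principle ``right-exact functors agreeing on projectives are isomorphic'' together with its naturality and uniqueness. The exactness of $\hat\Phi$ and the careful construction of the comparison on cokernels are the points needing attention, whereas the identification on ${\rm G}(A)$ in the first paragraph and the transport construction in (1) are routine.
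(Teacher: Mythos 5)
Your part (1) is essentially the paper's own construction: the maps $x\mapsto\varphi(x)\colon P_{\varphi^{-1}(i)}\to (P_i)_\varphi$ are exactly the paper's isomorphisms $\varphi_{P_i}$, and the transport definition $\Phi(g)=\theta_N\circ{\rm res}_\varphi(g)\circ\theta_M^{-1}$ with $\theta_{P_i}$ fixed by these maps is verbatim what the paper does. For (2) and (3), however, you take a genuinely different route. The paper never leaves ${\rm ind}(A)$: for an arbitrary extension $\Phi$ it builds the natural isomorphism $\tau\colon {\rm res}_\varphi\circ\iota\to\iota\circ\Phi$ element-wise, setting $\tau_M(m)=\sum_i\tau_M(me_i)$ where $\tau_M(me_i)$ is the element by which $\Phi(me_i\cdot)\colon P_{\varphi^{-1}(i)}\to\Phi(M)$ is multiplication; right $A$-linearity and naturality then fall out of functoriality of $\Phi$ together with $\Phi|_{{\rm G}(A)}={\rm G}(\varphi)$ (via $\Phi(e_k\varphi(a)e_{\varphi(i)}\cdot)=e_{\varphi^{-1}(k)}ae_i\cdot$), and (3) is immediate by taking $\eta_M=\tau'_M\tau_M^{-1}$, with $\eta_{P_i}={\rm id}$ because $\tau_{P_i}=\tau'_{P_i}$ is dictated by ${\rm G}(\varphi)$ alone. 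You instead pass to the additive hull ${\rm mod\text{-}} A={\rm add}\,{\rm ind}(A)$ (Krull--Schmidt), use that a self-equivalence of an abelian category is exact, and invoke the Watts-style comparison principle that right-exact additive functors naturally isomorphic on projectives are naturally isomorphic; this is sound (${\rm res}_\varphi$ is exact, your comparison lemma is correct, and the extension of the identity transformation on projectives restricts to the required $\eta_{P_i}={\rm id}$), but it shifts the work onto exactly the verifications you flag --- well-definedness of the additive extension, independence of the chosen presentation, naturality of the induced maps on cokernels --- all of which the paper's Yoneda-style construction gets in one stroke without ever using exactness or the abelian structure. One point your route makes explicit: you need $\Phi$ to be fully faithful (an ``autofunctor'' in the sense of a self-equivalence) so that $\hat\Phi$ is an exact equivalence; note the paper relies on the same hypothesis implicitly, when it asserts that the maps $ {\rm Hom}_A(P_i,M)\to{\rm Hom}_A(P_{\varphi^{-1}(i)},\Phi(M))$ induced by $\Phi$ in the definition of $\tau_M$ are isomorphisms, so the two proofs run under the same assumptions. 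What each approach buys: the paper's is self-contained and produces an explicit $\tau$ that is reused in (3), while yours is more conceptual and would generalize to other situations where a functor is prescribed on projectives, at the cost of importing the comparison machinery.
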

\begin{proof} 
(1) The autofunctor ${\rm res}_{\varphi}$ maps indecomposable modules to indecomposable modules. Thus,   for any indecomposable module $M$ there exists a module in ${\rm ind}(A)$ isomorphic to $M_{\varphi}.$ Denote it by $\Phi(M).$ The equality  $e_i=\varphi(e_{\varphi^{-1}(i)})$ includes  $\varphi(P_{\varphi^{-1}(i)})=P_i.$ The restriction of $\varphi$ on $P_{\varphi^{-1}(i)}$ is a module isomorphism, we denote it by $\varphi_{P_i}:P_{\varphi^{-1}(i)}\to (P_i)_{\varphi}.$ Thus, $\Phi(P_i)=P_{\varphi^{-1}(i)}.$ Let us choose isomorphisms $\theta_M:\Phi(M)\to M_\varphi $ such that  $\theta_{P_i}=\varphi_{P_i}.$ Then we can define the autofunctor $\Phi$ on a morphism $f:M\to N$ by the formula $\Phi(f)=\theta_N^{-1}\circ {\rm res}_{\varphi}(f) \circ \theta_M.$ It is a well-defined functor. Moreover, $\Phi(P_i)=P_{\varphi^{-1}(i)}={\rm G}(\varphi)(P_i),$ and for  $a\cdot :P_i\to P_j$ we have
$$\Phi(a\cdot)(b)=(\theta_{P_j}^{-1} \circ{\rm res}_{\varphi}(a\cdot) \circ \theta_{P_i})(b)=(\varphi_{P_j}^{-1}\circ {\rm res}_{\varphi}(a\cdot) \circ \varphi_{P_i})(b)=$$
$$=\varphi^{-1}(a\varphi(b))=\varphi^{-1}(a)b={\rm G}(\varphi)(a\cdot)(b),$$ and,  consequently, $\Phi(a\cdot)={\rm G}(\varphi)(a\cdot).$ Therefore, $\Phi$ is an extension of ${\rm G}(\varphi)$ on ${\rm ind}(A).$ 

(2) Let $\Phi$ be an extension of ${\rm G}(\varphi)$ on ${\rm ind}(A).$ Consider an indecomposable module  $M$ and an element $m\in M.$ Then $me_i\cdot:P_i\to M$ is a module homomorphism. Then the homomorphism  $\Phi(me_i\cdot ):P_{\varphi^{-1}(i)}\to \Phi(M)$ is a multiplication homomorphism by an element which we denote by $\tau_M(me_i)\in \Phi(M)e_{\varphi^{-1}(i)}.$ We define the map $\tau_M:M_\varphi\to \Phi(M)$ by the formula $\tau_M(m)=\sum_i \tau_M(me_i).$ The linear map  $\tau_M$ is a composition of the following isomorphisms
$$M\overset{\cong}{\longrightarrow} {\rm Hom}_A(A,M) \overset{\cong}{\longrightarrow} \bigoplus_i {\rm Hom}_A(P_i,M)\overset{\Phi}{\longrightarrow}$$
$$\overset{\Phi}{\longrightarrow} \bigoplus_i {\rm Hom}_A(P_{\varphi^{-1}(i)},\Phi(M))\overset{\cong}{\longrightarrow} {\rm Hom}_A(A,\Phi(M))\overset{\cong}{\longrightarrow} \Phi(M).$$
Let us check that $\tau_M:M_\varphi\to \Phi(M)$ is a module homomorphism. We have:
$$\tau_M(m\star ae_i)\cdot
=\tau_M(m\varphi(a)e_{\varphi(i)})\cdot =\Phi(m\varphi(a)e_{\varphi(i)}\cdot)=
\sum_k \Phi(me_{k}e_k\varphi(a)e_{\varphi(i)}\cdot)=$$
$$=\sum_k\Phi(me_k\cdot) \Phi(e_k\varphi(a)e_{\varphi(i)}\cdot)=
\sum_k\tau_M(me_k)e_{\varphi^{-1}(k)}ae_i\cdot=\tau_M(m)ae_{i}\cdot.$$
Summing over $i,$ we obtain $\tau_M(m\star a)=\tau_M(m)a.$
Thus, $\tau_M:M_\varphi\to \Phi(M)$ is a module isomorphism. Now we will check that $\tau=\{\tau_M\}_M$ is a morphism of functors ${\rm res}_\varphi \circ\iota\to \iota\circ  \Phi.$ Let $f:M\to N$ be a homomorphism of indecomposable modules in ${\rm ind}(A).$ Let us verify that the following diagram is commutative.
$$\xymatrix{
M_\varphi\ar[rr]^f\ar[d]^{\tau_M} & & N_\varphi\ar[d]^{\tau_N} \\
\Phi(M)\ar[rr]^{\Phi(f)} & & \Phi(N)
}$$
We have:
$$\tau_N(f(m)e_i)\cdot=\Phi(f(m)e_i\cdot)=\Phi(f(me_i))=$$
$$=\Phi(f)  \Phi(me_i\cdot)=\Phi(f) (\tau_M(me_i)\cdot).$$
Applying the last equality to $e_{\varphi^{-1}(i)},$ we obtain 
$$\tau_N(f(m)e_i)=\Phi(f)(\tau_M(me_i)).$$
Summing over $i,$ we get $\tau_N(f(m))=\Phi(f)(\tau_M(m)).$

(3) Consider isomorphisms $\tau_M:M_\varphi \to \Phi(M)$ and $\tau'_M : M_\varphi\to \Phi'(M)$ constructed in the previous part of the proof.   It is easy to check that isomorphisms  $\eta_M=\tau'_M\tau^{-1}_M:\Phi(M)\to \Phi'(M)$ form a natural isomorphism  $\eta=\{\eta_M\}:\Phi\to \Phi'$ satisfying the required property. 
\end{proof}

\begin{proposition}\label{prop_spectroids}
Let $A$ be a basic self-injective algebra,  $e_1,\dots, e_n$ be a complete set of primitive orthogonal idempotents, $\varphi$ be an automorphism of $A$ such that  $\varphi(e_i)=e_i$ for all $i,$ and $\Phi$ be an extension of ${\rm G}(\varphi)$ on ${\rm ind}(A).$  Then $\varphi\in \underline{\rm Inn}(A)$ if and only if $\underline{\Phi}\cong{\rm Id}_{\underline{\rm ind}(A)}.$
\end{proposition}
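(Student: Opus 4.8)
The plan is to transport, by means of the preceding lemma, the comparison between ${\rm res}_\varphi$ and $\Phi$ down to the stable categories, and then to reduce a statement about all of ${\rm \underline{mod}\text{-}} A$ to the same statement on indecomposable objects, exploiting that ${\rm \underline{mod}\text{-}} A$ is Krull--Schmidt and that $\underline{\rm res}_\varphi$ is additive.

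First I would note that, $\varphi$ being an automorphism, ${\rm res}_\varphi\cong-\otimes_AA_\varphi$ is an autoequivalence of ${\rm mod\text{-}} A$ preserving projectives; hence $\underline{\rm res}_\varphi$ is an autoequivalence of ${\rm \underline{mod}\text{-}} A$ that carries nonprojective indecomposables to nonprojective indecomposables and therefore restricts to an autofunctor of the full subcategory $\underline{\rm ind}(A)$. The key step is to record the isomorphism
$$\underline{\rm res}_\varphi\big|_{\underline{\rm ind}(A)}\ \cong\ \underline{\Phi}.$$
This follows by applying the canonical projection ${\rm mod\text{-}} A\to{\rm \underline{mod}\text{-}} A$ to the natural isomorphism $\iota\circ\Phi\cong{\rm res}_\varphi\circ\iota$ furnished by part (2) of the preceding lemma, together with the facts that the induced $\underline{\iota}\colon\underline{\rm ind}(A)\to{\rm \underline{mod}\text{-}} A$ is the (fully faithful) inclusion, that $\underline{\rm res}_\varphi$ preserves $\underline{\rm ind}(A)$, and that the hypothesis $\varphi(e_i)=e_i$ guarantees $\Phi(P_i)=P_i$, so that $\underline{\Phi}$ is well defined on the quotient $\underline{\rm ind}(A)={\rm ind}(A)/{\rm G}(A)$.

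Granting this identification, the forward direction is immediate: if $\varphi\in\underline{\rm Inn}(A)$, i.e. $\underline{\rm res}_\varphi\cong{\rm Id}_{{\rm \underline{mod}\text{-}} A}$, then restricting the isomorphism to the full subcategory $\underline{\rm ind}(A)$ and composing with the displayed identification yields $\underline{\Phi}\cong{\rm Id}_{\underline{\rm ind}(A)}$. For the converse I would begin with $\underline{\Phi}\cong{\rm Id}_{\underline{\rm ind}(A)}$, which by the same identification says that $\underline{\rm res}_\varphi$ is naturally isomorphic to the identity on nonprojective indecomposables, naturally in the morphisms between them.

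The remaining task, which I expect to be the only genuine obstacle, is to promote this natural isomorphism on indecomposables to a natural isomorphism $\underline{\rm res}_\varphi\cong{\rm Id}$ on the whole of ${\rm \underline{mod}\text{-}} A$. Here I would use that ${\rm \underline{mod}\text{-}} A$ is Krull--Schmidt and that every object becomes a finite direct sum of nonprojective indecomposables (projective summands vanish), together with the additivity of both $\underline{\rm res}_\varphi$ and ${\rm Id}$: fixing such a decomposition of each object and assembling the component isomorphisms diagonally produces the desired transformation, whose naturality on an arbitrary morphism follows by writing that morphism as a matrix with respect to the chosen decompositions and invoking naturality entrywise. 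That the result is independent of the choices up to the usual conjugation is routine Krull--Schmidt bookkeeping, and completes the proof.
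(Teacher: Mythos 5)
Your proof is correct and follows essentially the same route as the paper: the weakly commutative square $\underline{\rm res}_\varphi\circ\underline{\iota}\cong\underline{\iota}\circ\underline{\Phi}$ obtained from part (2) of the preceding lemma, full faithfulness of the inclusion $\underline{\iota}$ for the forward implication, and the Krull--Schmidt theorem for the converse. The only difference is one of detail: the paper simply cites Krull--Schmidt for the last step, whereas you spell it out as the standard extension of a natural isomorphism from the nonprojective indecomposables to all of ${\rm \underline{mod}\text{-}} A$ by diagonal assembly, which is exactly the intended argument.
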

\begin{proof}
From the last lemma it follows that the following diagram of functors is (weak) commutative.
$$\xymatrix{ 
\underline{\rm ind}(A)\ar[rr]^{\underline{\iota}}\ar[d]^{\underline{\Phi}} & & {\rm \underline{mod}\text{-}} A\ar[d]^{\underline{\rm res}_\varphi} \\
\underline{\rm ind}(A)\ar[rr]^{\underline{\iota}} & & {\rm \underline{mod}\text{-}} A 
}$$ The functor $\underline{\iota}$ is an inclusion. Hence, if $\varphi$ is a stably inner automorphism, then $\underline{\Phi}\cong {\rm Id}.$

On the other hand, if $\underline{\Phi}\cong {\rm Id},$  then $\underline{\rm res}_\varphi \circ \underline{\iota}\cong \underline{\iota}.$ Hence, using the Krull-Schmidt theorem, we obtain $\underline{\rm res}_{\varphi}\cong {\rm Id}.$
\end{proof}

\subsection{Example: the algebra $ k[t]/t^n$.}

In this subsection we compute the group of stably inner automorphisms of the algebra $ k[t]/t^n.$ For a real number $r$ we denote by $\lceil r \rceil$ the least integer greater than or equal to $r.$

\begin{proposition}\label{prop_k[t]/t^n}
Let $\varphi$ be an automorphism of the algebra  $A= k[t]/t^n.$ Then $\varphi$ is stably inner if and only if it induces the identity automorphism on the quotient algebra $ k[t]/t^s,$ where $s={\lceil \frac{n}{2} \rceil}.$ In particular, if  $n \geq 3$ и $\varphi(t)=\sum_{i=1}^{n-1}a_it^i,$ then $\varphi\in\underline{\rm Inn}(A)$ if and only if  $a_1=1$ and  $a_i=0$ for $2\leq i\leq \lceil \frac{n}{2} \rceil-1.$
\end{proposition}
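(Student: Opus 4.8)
The plan is to analyze $A=k[t]/t^n$ through its ideal structure and to feed it into Theorem \ref{stable_inner_theorem}. The algebra is commutative, local and self-injective (in fact symmetric), it has no semisimple blocks for $n\ge 2$ since its unique simple is non-projective, and its only proper non-zero ideals are the two-sided ideals $I_i=t^iA$ for $1\le i\le n-1$. Moreover every indecomposable $A$-module is cyclic, isomorphic to some $A/t^iA$. Since $\underline{\rm res}_\varphi$ is additive and every object of ${\rm \underline{mod}\text{-}} A$ is a direct sum of such (nonprojective) cyclic modules, a natural isomorphism on cyclic modules extends to all of ${\rm \underline{mod}\text{-}} A$; hence $\varphi\in\underline{\rm Inn}(A)$ if and only if $\underline{\rm res}^{\rm cycl}_\varphi\cong{\rm Id}$, and I may apply Theorem \ref{stable_inner_theorem}. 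Any automorphism satisfies $\varphi(t)=ut$ for a unit $u=a_1+a_2t+\cdots$ (with $a_1\ne 0$), whence $\varphi(t^iA)=t^iA$ by Corollary \ref{cor_st_in}; thus condition~1 of the theorem holds for an arbitrary family of units $\{\xi_i\}_{i=1}^{n-1}$, and the whole content lies in condition~2.

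Next I would record the elementary computations that unwind condition~2. A direct check in $k[t]/t^n$ gives $(t^jA:t^iA)=t^{\max(0,\,j-i)}A$ and $(0:t^iA)=t^{n-i}A$, so that $(0:t^iA)+\varphi(t^jA)=t^{\min(n-i,\,j)}A$. Condition~2 for the pair $(I,J)=(t^iA,t^jA)$ therefore becomes the requirement
$$\varphi(a)\,\xi_i-\xi_j\,a\in t^{\min(n-i,\,j)}A\quad\text{for all }a\in t^{\max(0,\,j-i)}A,$$
which I abbreviate $(\star_{ij})$; everything reduces to deciding for which $u$ a family of units $\{\xi_i\}$ making all $(\star_{ij})$ hold can be found.

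For necessity I would first use the diagonal relations $(\star_{ii})$: as $A$ is commutative and $\xi_i$ is a unit, $(\star_{ii})$ says exactly that $\varphi(a)\equiv a\pmod{t^{\min(i,\,n-i)}}$ for all $a$, i.e. $\varphi$ is the identity modulo $t^{\min(i,\,n-i)}$. Letting $i$ run to maximize $\min(i,n-i)=\lfloor n/2\rfloor$ gives $\varphi(t)\equiv t\pmod{t^{\lfloor n/2\rfloor}}$, which already settles even $n$. The genuinely delicate point, which I expect to be the main obstacle, is the half-step needed for odd $n=2m+1$, where the diagonal conditions reach only $t^{m}$ but the asserted threshold is $t^{m+1}=t^{\lceil n/2\rceil}$. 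Here I would exploit the two ``middle'' off-diagonal relations: $(\star_{m+1,m})$ at $a=1$ gives $\xi_{m+1}\equiv\xi_m\pmod{t^m}$, while $(\star_{m,m+1})$ at $a=t$ gives, after cancelling one power of $t$ (legitimate since $n-1\ge m$ forces $\mathrm{Ann}(t)=t^{n-1}A\subseteq t^mA$), the congruence $u\,\xi_m\equiv\xi_{m+1}\pmod{t^m}$. Combining the two yields $(u-1)\xi_m\in t^mA$, hence $u\equiv 1\pmod{t^m}$ and therefore $\varphi(t)=ut\equiv t\pmod{t^{m+1}}$, as required.

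For sufficiency I would take $\xi_i=1$ for all $i$ and verify every $(\star_{ij})$ under the hypothesis $\varphi(t)\equiv t\pmod{t^s}$ with $s=\lceil n/2\rceil$. Writing $\varphi(t)=t+t^sw$ one gets $\varphi(t)^k-t^k\in t^{k+s-1}A$, so that $\varphi(a)-a\in t^{\max(p,\,1)+s-1}A$ for $a\in t^pA$; thus $(\star_{ij})$ follows from the purely numerical inequality $\max(1,\,j-i)+s-1\ge\min(n-i,\,j)$. I would dispatch this by a short case analysis: if $j\le i$ the left side equals $s=\lceil n/2\rceil\ge\min(n-i,\,j)$ at once, while the two cases $j>i$ reduce respectively to $i<n/2$ and to $j>n/2$, each forced by $j>i$ together with the comparison of $i+j$ with $n$. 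This establishes $\varphi\in\underline{\rm Inn}(A)$. Finally, unwinding $\varphi(t)\equiv t\pmod{t^s}$ in coordinates gives precisely $a_1=1$ and $a_i=0$ for $2\le i\le\lceil n/2\rceil-1$, which is the ``in particular'' clause.
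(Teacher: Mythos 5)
Your proof is correct, but it takes a genuinely different route from the paper's. The paper works on the other side of the displayed inclusion of groups: using standardness of $A$, it identifies ${\rm ind}(A)$ with the mesh-spectroid $k(\Gamma)$, builds an explicit extension $\Phi$ of ${\rm G}(\varphi^{-1})$ via the elements $T_i$ (with $\Phi(\alpha_i)=\alpha_iT_i$, $\Phi(\beta_i)=\beta_i$), and invokes Proposition \ref{prop_spectroids}; the even case is then settled by Corollary \ref{cor_st_in} applied to the ideal $(t^m)$, and the delicate odd half-step by the Serre-functor property of $\Omega$ (the isomorphisms $(\beta_m)_*$ and $(\beta_m)^*$ at the two middle vertices of $\Gamma^s$). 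You instead upgrade Theorem \ref{stable_inner_theorem} to an exact criterion: since $k[t]/t^n$ is uniserial, every indecomposable is cyclic, so by additivity and Krull--Schmidt a natural isomorphism ${\rm Id}\cong\underline{\rm res}^{\rm cycl}_\varphi$ extends to all of ${\rm \underline{mod}\text{-}} A$ --- exactly the extension step the paper itself performs in proving Proposition \ref{prop_spectroids}, so this is legitimate. After that everything is elementary ideal arithmetic, and I have checked it: the formulas for $(t^jA:t^iA)$ and $(0:t^iA)$ are right; the diagonal conditions give precisely $\varphi\equiv{\rm id}\bmod t^{\lfloor n/2\rfloor}$; and your replacement for the Serre-functor trick --- the two off-diagonal conditions between $A/t^mA$ and $A/t^{m+1}A$, with the cancellation of $t$ justified by ${\rm Ann}(t)=t^{n-1}A\subseteq t^mA$ --- is a correct elementary substitute expressing the same naturality constraint at the middle of the AR-quiver. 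The sufficiency direction with $\xi_I=1$ reduces, as you say, to the inequality $\max(1,j-i)+s-1\geq\min(n-i,j)$, and your case analysis ($j\leq i$; $j>i$ with $i+j\leq n$ forcing $i<n/2$; $j>i$ with $i+j\geq n$ forcing $j>n/2$) checks out. What your route buys: it avoids AR-quiver combinatorics, mesh relations and the standardness of $A$ entirely, and it exhibits the natural isomorphism concretely, showing the criterion of Theorem \ref{stable_inner_theorem} is sharp (an equivalence, not just a necessary condition) for uniserial algebras; what the paper's route buys is a method that lives in the stable AR-quiver and is in the spirit of the spectroid machinery it develops for the representation-finite computations of Section 3. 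Two cosmetic remarks: citing Corollary \ref{cor_st_in} for $\varphi(t^iA)=t^iA$ is unnecessary (the direct computation $\varphi(t)=ut$ with $u$ a unit suffices) and is misplaced in the sufficiency direction, where $\varphi$ is not yet known to be stably inner; and you should restrict to $n\geq 2$ so that $A$ has no semisimple blocks before invoking Theorem \ref{stable_inner_theorem} (the case $n=1$ is trivial, and $n=2$ comes out correctly from your inequalities, matching the paper's inner-modulo-socle observation).
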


\begin{proof}

If $n=2,$ any automorphism is inner modulo socle, hence, it is stably inner. Therefore, the statement is obvious. From now on we will assume that $n\geq 3.$

The algebra $A= k[t]/t^n$ is a Nakayama algebra, and hence, any indecomposable module has the form  $ k[t]/t^i,$ where $1\leq i\leq n.$ It is well-known that $A$ is a standard algebra. Thus, ${\rm ind}(A)$  is isomorphic to the mesh-spectroid  $ k(\Gamma),$ where $\Gamma$ is the following translation quiver 
$$\Gamma: \hspace{5pt} \xymatrix{
0\ar@<2pt>[r]^{\alpha_0}& 1\ar@<2pt>[l]^{\beta_0}\ar@<2pt>[r]^{\alpha_1} &  \hspace{4pt} \dots \hspace{4pt} \ar@<2pt>[l]^{\beta_1}\ar@<2pt>[r]^{\alpha_{n-3}} & n-2\ar@<2pt>[l]^{\beta_{n-3}}\ar@<2pt>[r]^{\alpha_{n-2}} & n-1\ar@<2pt>[l]^{\beta_{n-2}}}
$$ and the translation $\tau:\Gamma_0\setminus \{0\}\to \Gamma_0\setminus \{0\}$  acts trivially.  A vertex  $i$ of the quiver corresponds to the module  $ k[t]/t^{n-i}.$  Hence,  ${\rm G}(A)$ is a full subspectroid in $ k(\Gamma)$ with the single object $0.$  Moreover,  ${\rm G}(A)(0,0)\cong  k[t]/t^n,$ and this isomorphism maps  $t$ to $\beta_{0} \alpha_{0}.$ We will identify  ${\rm G}(A)(0,0)$ with $ k[t]/t^n$, and  $t$ with $\beta_{0} \alpha_{0}.$  Further, the spectroid  $\underline{\rm ind}(A)={\rm ind}(A)/{\rm G}(A)$ is isomorphic to the mesh-spectroid  $ k(\Gamma^s),$ where  $\Gamma^s$ is the following translation quiver
$$\Gamma^s: \hspace{5pt} \xymatrix{
1\ar@<2pt>[r]^{\alpha_1}& 2\ar@<2pt>[l]^{\beta_1}\ar@<2pt>[r]^{\alpha_2} &  \hspace{4pt} \dots \hspace{4pt} \ar@<2pt>[l]^{\beta_{2}}\ar@<2pt>[r]^{\alpha_{n-3}} & n-2\ar@<2pt>[l]^{\beta_{n-3}}\ar@<2pt>[r]^{\alpha_{n-2}} & n-1\ar@<2pt>[l]^{\beta_{n-2}}}
$$ and the translation $\tau={\rm id}_{\Gamma^s_0}$ acts trivially on all vertices. The syzygy functor $\Omega$ induces an autofunctor $\omega$ on the spectroid $ k(\Gamma^s),$ which acts by the formula  $\omega(i)=n-i,$ $\omega(\alpha_i)=(-1)^{n-i-1}\beta_{n-i-1}$ and $\omega(\beta_i)=(-1)^i\alpha_{n-i-1}.$ 

Let $\varphi$ be an automorphism of the algebra $A.$ It is completely defined by the element $\varphi(t)$. Let  $\varphi(t)=\sum_{k=1}^{n-1} a_kt^k$ and $a_1\ne 0.$   We set $t_i=(-1)^i \beta_i \alpha_i\in  k(\Gamma)(i,i)$ for  $0\leq i\leq n-2$ and $t_{n-1}=0\in k(\Gamma)(n-1,n-1).$ It is clear that  $\alpha_i  t_i=t_{i+1} \alpha_i$ and $\beta_{i}  t_{i+1}= t_{i} \beta_{i}.$ By definition, we put $T_i=\sum_{k=1}^{n-1} a_kt_i^{k-1}\in  k(\Gamma)(i,i).$ Then  $tT_0=\varphi(t).$ Since $a_1\ne 0,$ the elements $T_i$ are invertible. Then we define an autofunctor  $\Phi$ of the category  $ k(\Gamma)$ as identical on objects, and on arrows  by the formulas 
$\Phi(\alpha_i)=\alpha_i T_i,$ $ \Phi(\beta_i)=\beta_i.$ 
In order to prove that the definition is correct we have to prove that the mesh-ideal is invariant under this functor.
$$\Phi(   \alpha_i\beta_i +   \beta_{i+1}\alpha_{i+1})=  \alpha_i  T_i\beta_i+ \beta_{i+1}\alpha_{i+1}  T_{i+1}= (\alpha_i  \beta_i+ \beta_{i+1}\alpha_{i+1})  T_{i+1}$$
$$\Phi(\alpha_{n-2}\beta_{n-2})=\alpha_{n-2}T_{n-2}\beta_{n-2}=
\alpha_{n-2}\beta_{n-2}T_{n-1}$$
The endofunctor $\Phi$ is an autofunctor because $T_i$ is invertible for any $i.$  Since  $\Phi(t)=\Phi(\beta_0\alpha_0)=tT_0=\varphi(t),$ the autofunctor is an extension of  ${\rm G}(\varphi^{-1}).$ The automorphism $\varphi$ is stably inner if and only if $\varphi^{-1}$ is stably inner. Hence, the automorphism $\varphi$ is stably inner if and only if the induced functor  $\underline{\Phi}$ of the spectroid  $ k(\Gamma^s)$  is isomorphic to the identity functor.

We denote by  $\text{\b{$t$}}_i$ and $\text{\b{$T$}}_i$ the images of $t_i$ and $T_i$ in the spectroid $ k(\Gamma^s).$ It is clear that  $\omega(\text{\b{$t$}}_i)=\text{\b{$t$}}_{n-i}$ and $\omega(\text{\b{$T$}}_i)=\text{\b{$T$}}_{n-i}$.  The endomorphism algebras of the spectroid  $ k(\Gamma^s)$ have the form  $ k(\Gamma^s)(i,i)\cong  k[\text{\b{$t$}}_i]/\text{\b{$t$}}_i^{{\rm min}(i, n-i)}.$ Let us consider two cases.

{\it Case 1.} $n=2m.$ Assume that  $\varphi\in \underline{\rm Inn}(A).$ Consider the ideal  $(t^m)\triangleleft A.$ It is clear that $(0:(t^m))+(t^m)=(t^m).$ Then the corollary  \ref{cor_st_in}  gives that  $\varphi$ induces the identity automorphism on $ k[t]/t^m.$ 

Assume that $\varphi$ induces the identity automorphism on  $ k[t]/t^m.$ In other words, $\varphi(t)-t \in (t^m).$ It follows that $T_i-e_i\in (t_i^{m-1}).$  For $i\ne m$ we have ${\rm min}(i,n-i)\leq m-1,$ this implies  $\text{\b{$T$}}_i=e_i$ for $i\ne m.$ Hence, $\underline{\Phi}(\alpha_i)=\alpha_i$ for $i\ne m$ and $\underline{\Phi}(\alpha_m)=\alpha_m \text{\b{$T$}}_m=\text{\b{$T$}}_{m+1}\alpha_m=\alpha_m.$ Therefore, $\underline{\Phi}={\rm Id}.$

{\it Case 2.}  $n=2m+1.$ Since $A$ is a symmetric algebra, $\Omega$ is a Serre functor. Thus, $ k(\Gamma^s)(i,j)\cong D  k(\Gamma^s)(j,\omega(i)),$ and,  consequently, dimensions of four vector spaces  $ k(\Gamma^s)(i,j),$ for $i,j\in\{m,m+1\}$ are equal. It follows from the mesh-relations that composition maps  $\beta_m$
$$(\beta_{m})_*: k(\Gamma^s)(m+1,m+1)\to   k(\Gamma^s)(m+1,m),$$ $$ (\beta_m)^*: k(\Gamma^s)(m,m)\to  k(\Gamma^s)(m+1,m)$$ are surjective, and hence they are isomorphisms. It follows that for any  $y\in  k(\Gamma^s)(m,m)$ there exists a unique  $x\in  k(\Gamma^s)(m+1,m+1)$ such that  $\beta_mx=y\beta_m,$ and, moreover,  $x=\omega(y).$ Similarly, we have that for any  $x\in  k(\Gamma^s)(m+1,m+1)$ there exists unique $y\in  k(\Gamma^s)(m,m)$ such that  $x\alpha_m=\alpha_my,$ and, moreover, $y=\omega(x).$

Let $\varphi$ be a stably inner automorphism. Then there is a functor isomorphism  $\xi: {\rm Id} \overset{\cong}{\longrightarrow} \underline{\Phi}.$ Since $\underline{\Phi}(\beta_m)=\beta_m,$ we obtain  $\beta_m\xi_{m+1}=\xi_m\beta_m.$ Thus we have  $\xi_{m+1}=\omega(\xi_m).$ Using the commutativity of the algebra  $ k(\Gamma^s)(m,m)$, from the equality  $\xi_{m+1}\alpha_m=\underline{\Phi}(\alpha_m)\xi_m=\alpha_m\text{\b{$T$}}_m\xi_m,$ we obtain  $\xi_{m+1}\alpha_m=\alpha_m\xi_m\text{\b{$T$}}_m=
\omega(\xi_m)\alpha_m\text{\b{$T$}}_m=
\xi_{m+1}\alpha_m\text{\b{$T$}}_m.$
Multiplying by $\xi_{m+1}^{-1}$, we get  $\alpha_m=\alpha_m \text{\b{$T$}}_m.$ It follows that $\text{\b{$T$}}_m=\omega(e_{m+1})=e_m$. Hence, $T_m-e_m\in (t_m^m).$ Thus, we obtain  $T_0-e_0\in (t^m).$ Multiplying by $t,$ we get $\varphi(t)-t\in (t^{m+1}).$ It follows that $\varphi$ induces the identity automorphism on  $ k[t]/t^{m+1}.$

Let now $\varphi$ induce the identity automorphism on $ k[t]/t^{m+1}.$  Then from $\varphi(t)-t\in (t^{m+1})$, it follows that $T_i-e_i\in (t_i^m)$. From the inequality  ${\rm min}(i,n-i)\leq m ,$ we obtain  $\text{\b{$T$}}_i=e_i.$ Hence, $\underline{\Phi}={\rm Id},$ and $\varphi\in \underline{\rm Inn}(A).$
\end{proof}

\section{Self-injective algebras of finite  type.}

This section is devoted to the calculation of stable Calabi-Yau dimensions for the  rest three cases of self-injective algebras of finite representation type over algebraically closed field $k$.  Recall that $m_{\Delta}$ is equal to $n$, $2n-3$, 11, 17 or 29 when $\Delta$ is $A_n$, $D_n$, $E_6$, $E_7$ or $E_8$ respectively.

Let $\mathcal{Q}=(\mathcal{Q}_0,\mathcal{Q}_1,s,t)$ be a quiver and $A=k\mathcal{Q}/I$ be a bound quiver algebra (where $I$ is an admissible ideal). For $u,v\in \mathcal{Q}_0$ we denote by $P_{[v][u]}:= A e_{v}\otimes e_{u}A$ the indecomposable projective bimodule corresponding to the idempotent \hbox{$e_{v}\otimes e_{u}$}. We set $\mathcal{P}_0(A^e)$ to be the category of finitely generated projective bimodules of the form $\bigoplus P_{[v_i][u_i]}$. Let  $\sigma$ be an automorphism of the algebra $A$ such that $\sigma(\{e_v\mid v\in\mathcal{Q}_0\})=\{e_v\mid v\in\mathcal{Q}_0\}.$ We define the  functor $\sigma:\mathcal{P}_0(A^e)\rightarrow\mathcal{P}_0(A^e)$ by the following formulas: $\sigma\left(\bigoplus P_{[v_i][u_i]}\right)=\bigoplus P_{[\sigma(v_i)][u_i]}$ (here $\sigma(e_{v})=e_{\sigma(v)}$), 
and if $d:\bigoplus P_{[v_i][u_i]}\longrightarrow \bigoplus P_{[v'_j][u'_j]}$ is given by 
$d(e_{v_i}\otimes e_{u_i})= \sum x_{l,j}\otimes y_{l,j},$ then $\sigma(d)(e_{\sigma(v_i)}\otimes e_{u_i})=\sum \sigma(x_{l,j})\otimes y_{l,j}$.

\subsection{Algebras of type $(A_{2n+1}, r, 2)$}

Throughout this subsection, we assume that $n$ is even. Any algebra of type $(A_{2n+1}, r, 2)$ is derived equivalent to the algebra $A=k\mathcal Q/I$ where the bound quiver $(\mathcal Q,I)$ is constructed as follows. The set of vertices is $\mathcal Q_0=(\mathbb{Z}/2r\mathbb{Z}\times\{1\dots n\})\sqcup\,\mathbb{Z}/r\mathbb{Z}$. For $i\in\mathbb{Z}$ the corresponding element of  $\mathbb{Z}/r\mathbb{Z}$ is denoted again by $i$ and the corresponding element of $\mathbb{Z}/2r\mathbb{Z}$ is denoted by $\,\,\widehat{i}$. The set of arrows $\mathcal Q_1$ of the quiver $\mathcal Q$ consists of the following elements:
$$
\begin{aligned}
&\alpha_{\,\,\widehat{i},j}:(\,\,\widehat{i},j)\rightarrow (\,\,\widehat{i},j+1)\hspace{0.2cm}(1\leq i\leq 2r, 1\leq j\leq n-1),\\
&\alpha_{\,\,\widehat{i},0}:i\rightarrow (\,\,\widehat{i},1), \alpha_{\,\,\widehat{i},n}:(\,\,\widehat{i},n)\rightarrow i+1\hspace{0.2cm}(1\leq i\leq 2r).
\end{aligned}
$$
The ideal $I$ is generated by the elements
$$
 \begin{aligned}
&\alpha_{\,\,\widehat{i+r+1},0}\alpha_{\,\,\widehat{i},n}, \hspace{0.2cm}(1\leq i\leq 2r),\alpha_{\,\,\widehat{i},n}\dots\alpha_{\,\,\widehat{i},0}+\alpha_{\,\,\widehat{i+r},n}\dots\alpha_{\,\,\widehat{i+r},0}\hspace{0.2cm}(1\leq i\leq 2r),\\
 &\alpha_{\,\,\widehat{i+1},j}\dots\alpha_{\,\,\widehat{i+1},0}\alpha_{\,\,\widehat{i},n}\dots\alpha_{\,\,\widehat{i},j}\hspace{0.2cm}(1\leq i\leq 2r, 1\leq j\leq n-1).
 \end{aligned}
 $$

$$
  \xymatrix@=0.6cm
  {
  &&&\txt{\scriptsize $\,\,\widehat{i},n$}\ar@{->}[dl]^*\txt{\tiny $\alpha_{\,\,\widehat{i},n}$}&\cdots\ar@{->}[l]^*\txt{\tiny $\alpha_{\,\,\widehat{i},n-1}$}&\txt{\scriptsize $\,\,\widehat{i},1$}\ar@{->}[l]^*\txt{\tiny $\alpha_{\,\,\widehat{i},1}$}\\
  &&\txt{\scriptsize $i+1$}&&&&
  \txt{\scriptsize $i$}\ar@{->}[ul]^*\txt{\tiny $\alpha_{\,\,\widehat{i},0}$}\ar@{->}[dl]_*\txt{\tiny $\alpha_{\,\,\widehat{i+r},0}$}\\
  &&&\txt{\scriptsize $\,\,\widehat{i+r},n$}\ar@{->}[ul]_*\txt{\tiny $\alpha_{\,\,\widehat{i+r},n}$}&\cdots\ar@{->}[l]_*\txt{\tiny $\alpha_{\,\,\widehat{i+r},n-1}$}&\txt{\scriptsize $\,\,\widehat{i+r},1$}\ar@{->}[l]^*\txt{\tiny $\alpha_{\,\,\widehat{i+r},1}$}\\
  \\
  &\txt{\scriptsize $\,\,\widehat{r},1$}\ar@{->}[r]_*\txt{\tiny $\alpha_{\,\,\widehat{r},1}$}&\cdots\ar@{->}[r]_*\txt{\tiny $\alpha_{\,\,\widehat{r},n-1}$}&\txt{\scriptsize $\,\,\widehat{r},n$}\ar@{->}[dr]_*\txt{\tiny $\alpha_{\,\,\widehat{r},n}$}&&\txt{\scriptsize $\,\,\widehat{1},1$}\ar@{->}[r]_*\txt{\tiny $\alpha_{\,\,\widehat{1},1}$}&\cdots\ar@{->}[r]_*\txt{\tiny $\alpha_{\,\,\widehat{1},n-1}$}&\txt{\scriptsize $\,\,\widehat{1},n-1$}\ar@{->}[dr]_*\txt{\tiny $\alpha_{\,\,\widehat{1},n}$}&\\
  \txt{\scriptsize $r$}\ar@{.}[uuuurr]\ar@{->}[ur]_*\txt{\tiny $\alpha_{\,\,\widehat{r},0}$}\ar@{->}[dr]^*\txt{\tiny $\alpha_{\,\,\widehat{2r},0}$}&&&&\txt{\scriptsize 1}\ar@{->}[ur]_*\txt{\tiny $\alpha_{\,\,\widehat{1},0}$}\ar@{->}[dr]^*\txt{\tiny $\alpha_{\,\,\widehat{r+1},0}$}&&&&\txt{\scriptsize 2}\ar@{.}[uuuull]\\
  &\txt{\scriptsize $\,\,\widehat{2r},1$}\ar@{->}[r]^*\txt{\tiny $\alpha_{\,\,\widehat{2r},1}$}&\cdots\ar@{->}[r]^*\txt{\tiny $\alpha_{\,\,\widehat{2r},n-1}$}&\txt{\scriptsize $\,\,\widehat{2r},n$}\ar@{->}[ur]^*\txt{\tiny $\alpha_{\,\,\widehat{2r},n}$}&&\txt{\scriptsize $\,\,\widehat{r+1},1$}\ar@{->}[r]^*\txt{\tiny $\alpha_{\,\,\widehat{r+1},1}$}&\cdots\ar@{->}[r]^*\txt{\tiny $\alpha_{\,\,\widehat{1},n-1}$}&\txt{\scriptsize $\,\,\widehat{1},n-1$}\ar@{->}[ur]^*\txt{\tiny $\alpha_{\,\,\widehat{r+1},n}$}
  }
$$

For the sake of simplicity, we omit brackets in the notation of vertices.
Let us define an automorphism $\sigma$ of the algebra $A$ on generators:
$$
\begin{array}{c}
\sigma(e_i)=e_{i+n+1}\hspace{0.2cm}(1\leq i\leq r),\ \sigma(e_{\,\,\widehat{i},j})=e_{\,\,\widehat{i+r+n+1},j}(1\leq i\leq 2r,1\leq j\leq n),\\
\sigma(\alpha_{\,\,\widehat{i},j})=\alpha_{\,\,\widehat{i+r+n+1},j}(1\leq i\leq 2r,1\leq j\leq n-1),\\
\sigma(\alpha_{\,\,\widehat{i},0})=\alpha_{\,\,\widehat{i+r+n+1},0}(r\leq i\leq 2r-1),\ \sigma(\alpha_{\,\,\widehat{i},0})=-\alpha_{\,\,\widehat{i+r+n+1},0}(0\leq i\leq r-1),\\
\sigma(\alpha_{\,\,\widehat{i},n})=\alpha_{\,\,\widehat{i+r+n+1},n}(r-1\leq i\leq 2r-2),\\
\sigma(\alpha_{\,\,\widehat{i},n})=-\alpha_{\,\,\widehat{i+r+n+1},n}(-1\leq i\leq r-2).
\end{array}
$$

Recall the description of the terms $Q_t$ ($t\geq 0$) of the minimal bimodule resolution of the algebra $A$ given in \cite{Gen_Kach}. The terms $Q_t$ ($0\leq t\leq 2n$) are described by the following formulas:
 $$
 \begin{aligned}
Q_{2m}&= \bigoplus\limits_{i=1}^rP_{[i+m][i]}
\oplus \bigoplus\limits_{i=1}^{2r}
\bigg(
   \Big(
        \bigoplus\limits_{j=1}^{n-m}P_{[\,\,\widehat{i+m},j+m][\,\,\widehat{i},j]}
  \Big)
    \oplus
  \Big(
        \bigoplus\limits_{j=n-m+1}^{n}P_{[\,\,\widehat{i+r+m},j+m-n][\,\,\widehat{i},j]}
  \Big)
  \bigg)\\
  &(m=0,\dots,n),\\
 Q_{2m+1}&=\bigoplus\limits_{i=1}^{2r}
\bigg(
P_{[\,\,\widehat{i+m},m+1][i]}\oplus
   \Big(
        \bigoplus\limits_{j=1}^{n-m-1}P_{[\,\,\widehat{i+m},j+m+1][\,\,\widehat{i},j]}
  \Big)
    \oplus
    P_{[i+m+1][\,\,\widehat{i},n-m]}\\
    &\oplus
  \Big(
        \bigoplus\limits_{j=n-m+1}^{n}P_{[\,\,\widehat{i+r+m+1},j+m-n][\,\,\widehat{i},j]}
  \Big)
  \bigg)\hspace{0.5cm}(m=0,\dots,n-1).
\end{aligned}
$$
Moreover, the terms with numbers exceeding $2n$ are obtained by the formula $Q_{l(2n+1)+t}=\sigma^l(Q_t)$ ($0\leq t\leq 2n$, $l\geq 1$).
Moreover, the fact that $(2n+1)$-th syzygy $\Omega^{2n+1}_{A^e}(A)$ of the bimodule $A$ is isomorphic to the twisted module 
 ${}_{\sigma^{-1}}A\cong A_{\sigma}$ is noted in the same paper (see also \cite{Erdmann_Holm_Snashall}).

It follows from the corollary \ref{cor_scydim} that in order to compute the stable Calabi-Yau dimension we have to find the least $t\geq 0$ such that $\Omega^{t+1}_{ A^e}(A)\cong A_{\nu^{-1}\varphi}$ where $\nu$ is Nakayama automorphism of the algebra $A$ and $\varphi$ is a stably inner automorphism. In this case, we have $$Q_{t+1}=(Q_0)_{\nu^{-1}\varphi}\cong\bigoplus\limits_{i=1}^rP_{[i+1][i]}\oplus \bigoplus\limits_{i=1}^{2r}
        \bigoplus\limits_{j=1}^{n}P_{[\,\,\widehat{i+1},j][\,\,\widehat{i},j]}$$
  and
$$Q_{t+2}=(Q_1)_{\nu^{-1}\varphi}\cong\bigoplus\limits_{i=1}^{2r}\bigg(P_{[\,\,\widehat{i+1},1][i]}\oplus\Big(
        \bigoplus\limits_{j=1}^{n-1}P_{[\,\,\widehat{i+1},j+1][\,\,\widehat{i},j]}\Big)\oplus P_{[i+2][\,\,\widehat{i},n]}\bigg).$$ 
Then, using the description of $Q_t$ ($t\geq 0$), it is easy to verify that $t+1=l(2n+1)$ for some $l>0$, i.e. $\Omega^{t+1}_{ A^e}(A)\cong A_{\sigma^l}$. It remains to find such numbers $l$ that the automorphism $\nu\sigma^l$ is stably inner. The equality $\nu\sigma^l(e_{\,\,\widehat{i},1})=e_{\,\,\widehat{i+l(r+n+1)-1},1}$ can be easily verified. Consequently, if the automorphism $\nu\sigma^l$ is stably inner then $2r\mid l(r+n+1)-1$. Existence of such $l$ is equivalent to the fact that ${\rm GCD}(r+n+1,2r)=1$.

Suppose that ${\rm GCD}(r+n+1,2r)=1$. Let $2r\mid l(r+n+1)-1$ (in particular $l$ is odd). Let us prove that the automorphism $\nu\sigma^l$ is stably inner. We assume that Nakayama automorphism $\nu$ is constructed using Frobenius form $\varepsilon :A\to k$ which is equal to $-1$ on paths $\alpha_{\,\,\widehat{i},n}\dots\alpha_{\,\,\widehat{i},0}$ ($-1\leq i\leq r-2$), $1$ on all other paths of length $n+1$ and $0$ on all remaining paths. In this case $\nu$ satisfies the following equalities:
$$
\begin{array}{c}
\nu(e_i)=e_{i-1}\hspace{0.2cm}(1\leq i\leq r),\ \nu(e_{\,\,\widehat{i},j})=e_{\,\,\widehat{i-1},j}(1\leq i\leq 2r,1\leq j\leq n),\\
\nu(\alpha_{\,\,\widehat{i},j})=\alpha_{\,\,\widehat{i-1},j}(1\leq i\leq 2r,1\leq j\leq n-1),\\
\nu(\alpha_{\,\,\widehat{i},0})=\alpha_{\,\,\widehat{i-1},0}(r\leq i\leq 2r-1),\ \nu(\alpha_{\,\,\widehat{i},0})=-\alpha_{\,\,\widehat{i-1},0}(0\leq i\leq r-1),\\
\nu(\alpha_{\,\,\widehat{i},n})=\alpha_{\,\,\widehat{i-1},n}(r-1\leq i\leq 2r-2),\ \nu(\alpha_{\,\,\widehat{i},n})=-\alpha_{\,\,\widehat{i-1},n}(-1\leq i\leq r-2).
\end{array}
$$
It is easy to verify the following equalities
$$
\begin{array}{c}
\nu\sigma^l(e_i)=e_{i}\hspace{0.2cm}(1\leq i\leq r),\ \nu\sigma^l(e_{\,\,\widehat{i},j})=e_{\,\,\widehat{i},j}(1\leq i\leq 2r,1\leq j\leq n),\\
\nu\sigma^l(\alpha_{\,\,\widehat{i},j})=\alpha_{\,\,\widehat{i},j}(1\leq i\leq 2r,1\leq j\leq n-1),\\
\nu\sigma^l(\alpha_{\,\,\widehat{i},0})=(-1)^{\sum\limits_{q=i-r+1}^ip_l(q)}\alpha_{\,\,\widehat{i},0}(1\leq i\leq 2r),\ \nu\sigma^l(\alpha_{\,\,\widehat{i},n})=(-1)^{\sum\limits_{q=i-r+2}^{i+1}p_l(q)}\alpha_{\,\,\widehat{i},n}(1\leq i\leq 2r),
\end{array}
$$
where $p_l:\mathbb{Z}/2r\mathbb{Z}\rightarrow \mathbb{Z}$ is defined by the formula
$$
  p_l(q)={\rm card}(\{s : 0\leq s\leq l, 2r\mid q+s(r+n+1)\}).
$$
Since $l$ is odd, it is easy to verify that $\nu\sigma^l(x)=a^{-1}xa$ for all $x\in A$ where
$$
a=\sum\limits_{i=1}^r(-1)^{\sum\limits_{q=i-r+1}^ip_l(q)}e_i+\sum\limits_{i=1}^{2r}\sum\limits_{j=1}^ne_{\,\,\widehat{i},j},
$$
Hence, the automorphism $\nu\sigma^l(x)$ is inner, and hence, it is stably inner. The following proposition follows from the above argument.

\begin{proposition}
Let $A$ be a self-injective algebra of finite representation type of type $(A_{2n+1}, r, 2)$ where $n$ is even, $n>0$. Then the stable Calabi-Yau dimension of the algebra $A$ is finite if and only if ${\rm GCD}(r+n+1,2r)=1$. If the last mentioned condition is satisfied then the stable Calabi-Yau dimension is equal to $l(2n+1)-1$ where $2r\mid l(r+n+1)-1$ and $0<l<2r$.
 \end{proposition}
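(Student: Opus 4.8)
The plan is to apply Corollary \ref{cor_scydim} to reduce the computation to a question about twisted regular bimodules, and then to decide combinatorially for which powers of $\sigma$ the automorphism $\nu\sigma^l$ is stably inner. By Corollary \ref{cor_scydim} the stable Calabi--Yau dimension is the least $t\geq 0$ with $\Omega^{t+1}_{A^e}(A)\cong A_{\nu^{-1}\varphi}$ for some $\varphi\in\underline{\rm Inn}(A)$; in particular $\Omega^{t+1}_{A^e}(A)$ must be isomorphic to a twisted regular bimodule $A_\psi$. Using the explicit terms $Q_t$ of the minimal bimodule resolution recalled from \cite{Gen_Kach}, I would note that twisting $A$ by $\nu^{-1}\varphi$ only permutes the idempotents, so the candidate terms $Q_{t+1}=(Q_0)_{\nu^{-1}\varphi}$ and $Q_{t+2}=(Q_1)_{\nu^{-1}\varphi}$ have their indecomposable projective summands completely determined. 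Comparing this forced multiset of summands against the periodic list of the $Q_m$ then forces $t+1$ to be a multiple of the period $2n+1$. Combined with the periodicity $Q_{l(2n+1)+m}=\sigma^l(Q_m)$ and the known isomorphism $\Omega^{2n+1}_{A^e}(A)\cong A_\sigma$, this gives $\Omega^{l(2n+1)}_{A^e}(A)\cong A_{\sigma^l}$, so the only candidates are $t+1=l(2n+1)$ with $l>0$.

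Next I would translate the requirement $A_{\sigma^l}\cong A_{\nu^{-1}\varphi}$ with $\varphi\in\underline{\rm Inn}(A)$ into the condition that $\nu\sigma^l$ itself be stably inner: an isomorphism $A_\psi\cong A_\chi$ forces $\psi$ and $\chi$ to differ by an inner automorphism, and inner automorphisms lie in $\underline{\rm Inn}(A)$, so $\varphi$ is stably inner exactly when $\nu\sigma^l$ is. To locate the admissible $l$ I would compute the action of $\nu\sigma^l$ on idempotents; the formulas for $\nu$ and $\sigma$ give $\nu\sigma^l(e_{\widehat i,1})=e_{\widehat{i+l(r+n+1)-1},1}$. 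Since any stably inner automorphism fixes each simple module up to isomorphism, and hence induces the trivial vertex permutation (equivalently, by Corollary \ref{cor_st_inn_loops}, $\varphi(e_i)-e_i\in{\rm rad}(A)$), a necessary condition is $2r\mid l(r+n+1)-1$. Such an $l$ exists modulo $2r$ precisely when $r+n+1$ is invertible in $\mathbb Z/2r\mathbb Z$, that is when ${\rm GCD}(r+n+1,2r)=1$; when this fails no admissible $l$ exists, hence no finite $t$, and the stable Calabi--Yau dimension is $\infty$.

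Finally, assuming ${\rm GCD}(r+n+1,2r)=1$, I would take the unique $l$ with $0<l<2r$ and $2r\mid l(r+n+1)-1$ (note $l$ is odd, since $l(r+n+1)\equiv 1\pmod{2r}$ is odd) and verify that $\nu\sigma^l$ is genuinely inner rather than merely a candidate. For this I would compute $\nu\sigma^l$ on all arrows: it fixes each $\alpha_{\widehat i,j}$ for $1\le j\le n-1$ and multiplies $\alpha_{\widehat i,0}$ and $\alpha_{\widehat i,n}$ by signs recorded by the counting function $p_l(q)={\rm card}\{s:0\le s\le l,\ 2r\mid q+s(r+n+1)\}$. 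The decisive check is that these signs are exactly those produced by conjugation by the sign-diagonal unit $a=\sum_{i=1}^r(-1)^{\sum_{q=i-r+1}^i p_l(q)}e_i+\sum_{i=1}^{2r}\sum_{j=1}^n e_{\widehat i,j}$, so that $\nu\sigma^l(x)=a^{-1}xa$ for all $x$; then $\nu\sigma^l$ is inner, hence stably inner, and $t=l(2n+1)-1$ is admissible and minimal. The main obstacle is this sign bookkeeping: verifying that the parities $\sum_q p_l(q)$ accumulated along the two $2r$-fold cycles of arrows coincide with conjugation by $a$, which relies on the oddness of $l$ and a careful telescoping of the index sums. Indeed, the whole argument turns on upgrading the \emph{necessary} condition (trivial vertex permutation) to a \emph{sufficient} one by exhibiting this explicit conjugating unit.
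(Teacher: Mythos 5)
Your proposal is correct and follows essentially the same route as the paper's own argument: reduction via Corollary \ref{cor_scydim}, comparison of $(Q_0)_{\nu^{-1}\varphi}$ and $(Q_1)_{\nu^{-1}\varphi}$ with the periodic resolution of \cite{Gen_Kach} to force $t+1=l(2n+1)$, the necessary congruence $2r\mid l(r+n+1)-1$ from the vertex permutation, and sufficiency by exhibiting the very same sign-diagonal unit $a$ conjugating to $\nu\sigma^l$. The only cosmetic difference is that you invoke Corollary \ref{cor_st_inn_loops} where the paper only uses the action on simple modules, which changes nothing of substance.
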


\subsection{Algebras of type $(D_n, r, 2)$}

Throughout this subsection, we assume that $r$ is even. Any algebra of type $(D_n, r, 2)$ is derived equivalent to the algebra $A=k\mathcal Q/I$ where the bound quiver $(\mathcal Q,I)$ is the following. The set of vertices is $\mathcal Q_0=(\mathbb{Z}/r\mathbb{Z}\times\{1\dots n-2\})\sqcup\,\mathbb{Z}/2r\mathbb{Z}$. We use in this subsection the following notation: for $i\in\mathbb{Z}$ the corresponding element in  $\mathbb{Z}/r\mathbb{Z}$ (resp. in $\mathbb{Z}/2r\mathbb{Z}$) is denoted by $i$ (resp. $\,\,\widehat{i}$). The set of arrows $\mathcal Q_1$ of the quiver $\mathcal Q$ consists of the following elements:
$$
\begin{aligned}
&\gamma_{\,\,\widehat{i}}:(i, n-2)\rightarrow \,\,\widehat{i}, \beta_{\,\,\widehat{i}}:\,\,\widehat{i}\rightarrow (i+1, 1)\hspace{0.2cm}(1\leq i\leq 2r),\\
&\alpha_{i,j}:(i,j)\rightarrow (i,j+1)\hspace{0.2cm}(1\leq i\leq r, 1\leq j\leq n-3).
\end{aligned}
$$
The ideal $I$ is generated by the elements
$$
 \begin{aligned}
&\gamma_{\,\,\widehat{i+r}}\alpha_{i,n-3}\dots\alpha_{i,1}\beta_{\,\,\widehat{i-1}}, \beta_{\,\,\widehat{i}}\gamma_{\,\,\widehat{i}}-\beta_{\,\,\widehat{i+r}}\gamma_{\,\,\widehat{i+r}}\hspace{0.2cm}(1\leq i\leq 2r),\\
 &\alpha_{i+1,j}\dots\alpha_{i+1,1}\beta_{\,\,\widehat{i}}\gamma_{\,\,\widehat{i}}\alpha_{i,n-3}\dots\alpha_{i,j}\hspace{0.2cm}(1\leq i\leq r, 1\leq j\leq n-3).
 \end{aligned}
 $$

$$
  \xymatrix@=0.6cm
  {
  &&&&\txt{\scriptsize $\,\,\widehat{i}$}\ar@{->}[dl]_*\txt{\tiny $\beta_{\,\,\widehat{i}}$}\\
  &&\txt{\scriptsize i+1,2} \ar@{->}[dll]_*\txt{\tiny $\alpha_{i+1,2}$}&\txt{\scriptsize i+1,1}\ar@{->}[l]_*\txt{\tiny $\alpha_{i+1,1}$}&&
  \txt{\scriptsize i,n-2}\ar@{->}[ul]_*\txt{\tiny $\gamma_{\,\,\widehat{i}}$}\ar@{->}[dl]^*\txt{\tiny $\gamma_{\,\,\widehat{i+r}}$}&
  \txt{\scriptsize i,n-3}\ar@{->}[l]_*\txt{\tiny $\alpha_{i,n-3}$}&\\
  &&&&\txt{\scriptsize $\,\,\widehat{i+r}$}\ar@{->}[ul]^*\txt{\tiny $\beta_{\,\,\widehat{i+r}}$}&&&&\ar@{->}[ull]_*\txt{\tiny $\alpha_{i,n-4}$}\\
  \\
  \ar@{.}[uu]\ar@{->}[dr]_*\txt{\tiny $\alpha_{r,n-3}$}&&\txt{\scriptsize $\,\,\widehat{r}$}\ar@{->}[dr]^*\txt{\tiny $\beta_{\,\,\widehat{r}}$}&&&&
  \txt{\scriptsize $\,\,\widehat{1}$}\ar@{->}[dr]^*\txt{\tiny $\beta_{\,\,\widehat{1}}$}&&\ar@{.}[uu]\\
  &\txt{\scriptsize r,n-2}\ar@{->}[ur]^*\txt{\tiny $\gamma_{\,\,\widehat{r}}$}\ar@{->}[dr]_*\txt{\tiny $\gamma_{\,\,\widehat{2r}}$}&&
  \txt{\scriptsize 1,1}\ar@{->}[r]_*\txt{\tiny $\alpha_{1,1}$}&\cdots\ar@{->}[r]_*\txt{\tiny $\alpha_{1,n-3}$}&
  \txt{\scriptsize 1,n-2}\ar@{->}[ur]^*\txt{\tiny $\gamma_{\,\,\widehat{1}}$}\ar@{->}[dr]_*\txt{\tiny $\gamma_{\,\,\widehat{r+1}}$}&&
  \txt{\scriptsize 2,1}\ar@{->}[ur]_*\txt{\tiny $\alpha_{2,1}$}\\
  &&\txt{\scriptsize $\,\,\widehat{2r}$}\ar@{->}[ur]_*\txt{\tiny $\beta_{\,\,\widehat{2r}}$}&&&&\txt{\scriptsize $\,\,\widehat{r+1}$}\ar@{->}[ur]_*\txt{\tiny $\beta_{\,\,\widehat{r+1}}$}
  }
$$

Let us define an automorphism $\sigma$ of the algebra $A$ on generators:
$$
\begin{array}{c}
\sigma(e_{i,j})=e_{i+n-1,j},\ \sigma(\alpha_{i,j})=\alpha_{i+n-1,j}\hspace{0.5cm}(1\leq i\leq r,1\leq j\leq n-2),\\
\sigma(e_{\,\,\widehat{i}})=e_{\,\,\widehat{i+n-1+rn}},\sigma(\beta_{\,\,\widehat{i}})=\beta_{\,\,\widehat{i+n-1+rn}}\hspace{0.5cm}(1\leq i\leq 2r),\\ \sigma(\gamma_{\,\,\widehat{i}})=-\gamma_{\,\,\widehat{i+n-1+rn}}\hspace{0.5cm}(1\leq i\leq 2r, r\nmid i),
\sigma(\gamma_{\,\,\widehat{i}})=\gamma_{\,\,\widehat{i+n-1+rn}}\hspace{0.5cm}(i\in\{r,2r\}).
\end{array}
$$

Recall the description of the terms $Q_t$ ($t\geq 0$) of the bimodule resolution of the algebra $A$ (see \cite{Volk5}). The terms $Q_t$ ($0\leq t\leq 2n-4$) are described by the following formulas:
 $$
 \begin{aligned}
Q_{2m}&=  \bigoplus\limits_{i=1}^r
 \bigg(
    \Big(
        {\:\bigoplus\limits_{j=1}^{n-2-m}P_{[i+m,j+m][i,j]}}
    \Big)
  \oplus
   \Big(
  {\bigoplus\limits_{j=n-1-m}^{n-2}P_{[i+m,j+m-(n-2)][i,j]}
  }
  \Big)\bigg)\\
 &\oplus\bigoplus\limits_{i=1}^{2r}
  P_{[\,\,\widehat{i+m(r+1)}][\,\,\widehat{i}\,\,]}\hspace{0.5cm}
(m=0,\dots,n-2),\\
 Q_{2m+1}&=\bigoplus\limits_{i=1}^r
\bigg(\Big(\bigoplus\limits_{j=1}^{n-3-m}P_{[i+m,j+m+1][i,j]}
\Big)\oplus
\Big(\bigoplus\limits_{j=n-1-m}^{n-2}P_{[i+m+1,j+m-(n-2)][i,j]}
 \Big)\bigg)\\
 &\oplus\bigoplus\limits_{i=1}^{2r}\Big(P_{[\,\,\widehat{i+m}][i,n-2-m]}\oplus P_{[i+m+1,m+1][\,\,\widehat{i}\,\,]}\Big)\hspace{0.5cm}(m=0,\dots,n-3).
\end{aligned}
$$
Moreover, the terms with numbers exceeding $2n-4$ are obtained by the formula $Q_{l(2n-3)+t}=\sigma^l(Q_t)$ ($0\leq t\leq 2n-4$, $l\geq 1$).
Moreover, the fact that $(2n-3)$-th syzygy $\Omega^{2n-3}_{A^e}(A)$ of the $ A^e$-module $A$ is isomorphic to twisted module  ${}_{\sigma^{-1}}A\cong A_{\sigma}$ is noted in the same paper.

By corollary \ref{cor_scydim} we have to find the least $t\geq 0$ such that $\Omega^{t+1}_{ A^e}(A)\cong A_{\nu^{-1}\varphi}$ where $\nu$ is Nakayama automorphism of the algebra $A$ and $\varphi$ is a stably inner automorphism. In this case we have
$$Q_{t+1}=(Q_0)_{\nu^{-1}\varphi}\cong\bigoplus\limits_{i=1}^r
        {\:\bigoplus\limits_{j=1}^{n-2}P_{[i+1,j][i,j]}}
 \oplus\bigoplus\limits_{i=1}^{2r}P_{[\,\,\widehat{i+1)}][\,\,\widehat{i}\,\,]}$$
  and
$$Q_{t+2}=(Q_1)_{\nu^{-1}\varphi}\cong\bigoplus\limits_{i=1}^r\bigoplus\limits_{j=1}^{n-3}P_{[i+1,j+1][i,j]}
\oplus\bigoplus\limits_{i=1}^{2r}\Big(P_{[\,\,\widehat{i+1}][i,n-2]}\oplus P_{[i+2,1][\,\,\widehat{i}\,\,]}\Big).$$
 Then, using the description of $Q_t$ ($t\geq 0$), it is easy to prove  that $t+1=l(2n-3)$ for some $l>0$, i.e. $\Omega^{t+1}_{ A^e}(A)\cong A_{\sigma^l}$. It remains to find such numbers $l$ that the automorphism $\nu\sigma^l$ is stably inner. The equality $\nu\sigma^l(e_{\,\,\widehat{i}})=e_{\,\,\widehat{i+l(n-1+rn)-1}}$ can be easily verified. Consequently, if the automorphism $\nu\sigma^l$ is stably inner then $2r\mid l(n-1+rn)-1$. Existence of such $l$ is equivalent to the fact that ${\rm GCD}(n-1,r)=1$.

Suppose that ${\rm GCD}(n-1,r)=1$ (in particular $ n$ is even). Let $2r\mid l(n-1+rn)-1$. Then $l$ is odd. In this case the following equalities hold (we assume that Nakayama automorphism $\nu$ is constructed using Frobenius form which equals 1 on paths of length $n-1$ and 0 on all remaining paths of the quiver $\mathcal Q$)
$$
\begin{array}{c}
\nu\sigma^l(e_x)=e_x\hspace{0.5cm}(x\in\mathcal Q_0),\ \nu\sigma^l(\alpha_{i,j})=\alpha_{i,j}\hspace{0.5cm}(1\leq i\leq r,1\leq j\leq n-2),\\
\nu\sigma^l(\beta_{\,\,\widehat{i}})=\beta_{\,\,\widehat{i}}\hspace{0.5cm}(1\leq i\leq 2r),\ \nu\sigma^l(\gamma_{\,\,\widehat{i}})=(-1)^{l+p_l(i)}\gamma_{\,\,\widehat{i}}\hspace{0.5cm}(1\leq i\leq 2r),
\end{array}
$$
where $p_l:\mathbb{Z}/r\mathbb{Z}\rightarrow \mathbb{Z}$ is defined by the formula:
$$
  p_l(i):={\rm card}(\{s\mid  0\leq s\leq l-1, r\mid i+s(n-1)\}).
$$

If ${\rm char} (k)=2$ then $\nu\sigma^l$ is identity automorphism. Suppose that ${\rm char} (k)\ne 2$. It follows from the corollary  \ref{cor_st_inn_loops} that if $\nu\sigma^l$ is stably inner then there are $d_{i,j}\in k^*$ ($1\leq i\leq r$, $1\leq j\leq n-2$) and $d_{\,\,\widehat{i}}\in k^*$ ($1\leq i\leq 2r$) such that
 $$
 d_{i,j}=d_{i,j+1}\hspace{0.2cm}(1\leq i \leq r, 1\leq j\leq n-3),$$ $$\ d_{i+1,1}=d_{\,\,\widehat{i}},\ d_{i,n-2}=(-1)^{l+p_l(i)}d_{\,\,\widehat{i}}\hspace{0.2cm}(1\leq i \leq 2r).
 $$
By induction, it can be easily proved  that $d_{m,1}=(-1)^{(m-1)l+\sum\limits_{q=1}^{m-1}p_l(q)}d_{1,1}$. In particular, we have $d_{1,1}=d_{r+1,1}=(-1)^{rl+\sum\limits_{q=1}^{r}p_l(q)}d_{1,1}=(-1)^ld_{1,1}=-d_{1,1}$, i.e. $d_{1,1}=0\not\in k^*$. The contradiction proves that $\nu\sigma^l$ is not stably inner in the case of ${\rm char} (k) \ne 2$. Thus, we have

\begin{proposition}
Let $A$ be a self-injective algebra of finite representation type of type $(D_n, r, 2)$ where $r$ is even. Then stable Calabi-Yau dimension of the algebra $A$ is finite if and only if ${\rm GCD}(r,n-1)=1$ and ${\rm char}(k)=2$. If this conditions are satisfied then the stable Calabi-Yau dimension is equal to $l(2n-3)-1$ where $2r\mid l(n-1)-1$ and $0<l<2r$.
 \end{proposition}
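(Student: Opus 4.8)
The plan is to apply Corollary \ref{cor_scydim} directly, since $A$ is Frobenius: the stable Calabi--Yau dimension is finite precisely when there is a $t\geq 0$ with $\Omega^{t+1}_{A^e}(A)\cong A_{\nu^{-1}\varphi}$ for some stably inner $\varphi$, and then the dimension is the least such $t$. The first step is to determine which syzygies can be a twist $A_\psi$ of the regular bimodule. Using the explicit resolution terms $Q_t$ from \cite{Volk5} together with the periodicity $Q_{l(2n-3)+t}=\sigma^l(Q_t)$, I would compare the indecomposable projective summands of $Q_{t+1}$ and $Q_{t+2}$ against the shapes $(Q_0)_{\nu^{-1}\varphi}$ and $(Q_1)_{\nu^{-1}\varphi}$ that a twist $A_{\nu^{-1}\varphi}$ must produce; matching the multisets of defining idempotents forces $t+1=l(2n-3)$ for some $l>0$, whence $\Omega^{t+1}_{A^e}(A)\cong A_{\sigma^l}$. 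Since two twisted bimodules $A_{\sigma^l}$ and $A_{\nu^{-1}\varphi}$ are isomorphic exactly when the automorphisms differ by an inner one, and inner automorphisms are stably inner and lie in the normal subgroup $\underline{\rm Inn}(A)$, the whole question collapses to deciding for which $l$ the composite $\nu\sigma^l$ is stably inner.

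Next I would extract the numerical constraint. A stably inner automorphism fixes every module up to isomorphism (Section 2), so in particular it must preserve the isomorphism class of each indecomposable projective $e_{\,\widehat{i}}A$, which forces the induced vertex permutation to be trivial. Computing $\nu\sigma^l$ on idempotents gives $\nu\sigma^l(e_{\,\widehat{i}})=e_{\,\widehat{i+l(n-1+rn)-1}}$, so stable innerness requires $2r\mid l(n-1+rn)-1$; because $n$ is even this reduces modulo $2r$ to $2r\mid l(n-1)-1$. Such an $l$ exists iff $n-1$ is invertible modulo $2r$, i.e.\ iff ${\rm GCD}(n-1,r)=1$ (the factor $2$ is automatically coprime, since ${\rm GCD}(n-1,r)=1$ with $r$ even forces $n-1$ odd). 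This yields a unique $l$ with $0<l<2r$, and one checks that this $l$ is odd. So when ${\rm GCD}(n-1,r)\ne 1$ the dimension is infinite.

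The decisive step is to decide, assuming ${\rm GCD}(n-1,r)=1$, whether $\nu\sigma^l$ is genuinely stably inner, and this is where the characteristic enters. Choosing the Frobenius form equal to $1$ on the paths of length $n-1$, I would compute $\nu\sigma^l$ on all generators: it fixes every idempotent, every $\alpha_{i,j}$ and every $\beta_{\,\widehat{i}}$, while $\nu\sigma^l(\gamma_{\,\widehat{i}})=(-1)^{l+p_l(i)}\gamma_{\,\widehat{i}}$ for the counting function $p_l$. In characteristic $2$ the signs disappear, $\nu\sigma^l$ is the identity, hence stably inner, and the dimension equals $l(2n-3)-1$. In characteristic $\ne 2$ I would invoke Corollary \ref{cor_st_inn_loops}: the hypothesis ${\rm soc}^2({}_AA)\subseteq{\rm rad}^2(A)$ holds, so stable innerness of $\nu\sigma^l$ would force scalars $d_\bullet\in k^*$ with $d_{i,j}=d_{i,j+1}$, $d_{i+1,1}=d_{\,\widehat{i}}$ and $d_{i,n-2}=(-1)^{l+p_l(i)}d_{\,\widehat{i}}$.

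Chaining these relations around the cyclic index produces $d_{1,1}=(-1)^{rl+\sum_{q=1}^{r}p_l(q)}d_{1,1}$, and the combinatorial identity $\sum_{q=1}^{r}p_l(q)=l$ (each residue class mod $r$ is hit exactly once for each $s$, so the double sum collapses to $l$), combined with $r$ even and $l$ odd, reduces the sign to $-1$; thus $d_{1,1}=-d_{1,1}$, impossible in $k^*$ when $\operatorname{char}k\ne 2$. I expect this sign bookkeeping --- verifying $\sum_{q=1}^{r}p_l(q)=l$ and correctly tracking the parities of $rl$ and $l$ --- to be the main obstacle, since it is exactly what separates the two characteristics and pins down both the finiteness criterion (${\rm GCD}(r,n-1)=1$ and $\operatorname{char}k=2$) and the value $l(2n-3)-1$.
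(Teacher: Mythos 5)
Your proposal is correct and follows essentially the same route as the paper's proof: Corollary \ref{cor_scydim} plus the resolution from \cite{Volk5} to force $t+1=l(2n-3)$, the idempotent computation $\nu\sigma^l(e_{\,\,\widehat{i}})=e_{\,\,\widehat{i+l(n-1+rn)-1}}$ giving the divisibility and finiteness criterion, the identity automorphism in characteristic $2$, and Corollary \ref{cor_st_inn_loops} with the sign chain $d_{1,1}=(-1)^{rl+\sum_{q=1}^{r}p_l(q)}d_{1,1}=-d_{1,1}$ in characteristic $\ne 2$. Your only addition is to make explicit the combinatorial identity $\sum_{q=1}^{r}p_l(q)=l$ (each $s$ contributes exactly one residue $q$ modulo $r$), which the paper uses silently in the step $(-1)^{rl+\sum_{q=1}^{r}p_l(q)}=(-1)^{l}$, and your justification of it is correct.
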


\subsection{Nonstandard algebras}

It follows from \cite{Asashiba} that all nonstandard algebras are of type $(D_{3n}, \frac{1}{3}, 1)$ and this type determines them up to the derived equivalence. Moreover, nonstandard algebras exist only over fields of characteristic $2$.

Any nonstandard algebra of type $(D_{3n}, \frac{1}{3}, 1)$ is derived equivalent to the algebra $A=k\mathcal Q/I$ where the bound quiver $(\mathcal Q,I)$ is constructed as follows. The set of vertices is $\mathcal Q_0= \{0,\dots, n-1\}$. The set of arrows $\mathcal Q_1$ of the quiver $\mathcal Q$ consists of the following elements:
$$
\begin{aligned}
\beta:0\rightarrow 0,\alpha_i:i\rightarrow i+1(0\leq i\leq n-2), \alpha_{n-1}:n-1\rightarrow 0.
\end{aligned}
$$
Also we introduce the auxiliary notation: $\nu_i=\alpha_{n-1}\dots\alpha_i$, $\mu_i=\alpha_{i-1}\dots\alpha_0$ for $0\leq i\leq n-1$.

The ideal $I$ is generated by the elements $\alpha_0\alpha_{n-1}+\alpha_0\beta\alpha_{n-1}$, $\beta^2-\nu_0$ and $\mu_i\nu_i$,
 where $i=1,\dots, {\rm max}(n-2,1)$.
 $$
  \xy
  (0,20)*+\txt{\scriptsize $n-2$ };
  (5,33)*+\txt{\scriptsize $n-1$}
  **\crv{}
  ?>*\dir{>}
  ?(.3) *!LD!/^-20pt/\txt{\tiny $\alpha_{n-2}$};
  (5,33)*+\txt{\scriptsize $n-1$};
  (25,40)*+\txt{\scriptsize 0}
  **\crv{}
  ?>*\dir{>}
  ?(.5) *!LD!/^-8pt/\txt{\tiny $\alpha_{n-1}$};
  (25,40)*+\txt{\scriptsize $0$};
  (45,33)*+\txt{\scriptsize $1$}
  **\crv{}
  ?>*\dir{>}
  ?(.5) *!LD!/^-8pt/\txt{\tiny $\alpha_0$};
  (25,40)*+\txt{\scriptsize $0$};
  (25,40)*+\txt{\scriptsize $0$}
  **\crv{(20, 45)&(25,50)&(30, 45)}
  ?>*\dir{>}
  ?(.5) *!LD!/^-8pt/\txt{\tiny $\beta$};
  (45,33)*+\txt{\scriptsize $1$};
  (50,20)*+\txt{\scriptsize $2$}
  **\crv{}
  ?>*\dir{>}
  ?(.7) *!LD!/^-10pt/\txt{\tiny $\alpha_1$};
  (50,20)*+\txt{\scriptsize $2$}; (0,20)*+\txt{\scriptsize $n-2$}
  **\crv{~*=<4pt>{.} (25, 0)}
  \endxy
$$

Let us define automorphism $\sigma$ of the algebra $A$ on generators:
$$\sigma(e_i)=e_i,\ \sigma(\alpha_i)=\alpha_i\ (0\leq i\leq n-1),\ \sigma(\beta)=\beta+\beta^2+\beta^3.$$

By definition, we put:
 \begin{align*}
T_{2m}&=
 P_{[0][0]}\oplus
    \Big(
        {\:\bigoplus\limits_{i=1}^{n-1-m}P_{[i+m][i]}}
    \Big)
  \oplus
   \Big(
  {\bigoplus\limits_{i=n-m}^{n-1}P_{[i+m-(n-1)][i]}
  }
  \Big)
  \ \ \ (m=0,\dots,n-1),\\
 T'_{2m+1}&=\Big(\bigoplus\limits_{i=0}^{n-2-m}P_{[i+m+1][i]}
\Big)\oplus
\Big(\bigoplus\limits_{i=n-1-m}^{n-1}P_{[i+m-(n-1)][i]}
 \Big),\ T_{2m+1}=T_{2m+1}'\oplus P_{[0][0]}\\
&(m=0,\dots,n-2).
\end{align*}

The following description of the terms $Q_t$ ($t\geq 0$) of the bimodule resolution of the algebra $A$ is represented in  \cite{Volk4}.

1. If $n$ is even then $Q_{2m}=T_{2m}$ for $0\leq m\leq n-1$, $Q_{4m+1}=T_{4m+1}$ for $0\leq m\leq \frac{n-2}{2}$ and $Q_{4m+3}=T'_{4m+3}$ for $0\leq m\leq \frac{n-4}{2}$. Moreover, $Q_{l(2n-1)+t}=Q_t$ for $0\leq t\leq 2n-2$.

2. If $n$ is odd then $Q_{2m}=T_{2m}$ for $0\leq m\leq n-1$, $Q_{4m+1}=T_{4m+1}$, $Q_{4m+3}=T'_{4m+3}$ for $0\leq m\leq \frac{n-3}{2}$,
$Q_{2n-1+2m}=T_{2m}$ for $0\leq m\leq n-1$, $Q_{2n-1+4m+1}=T'_{4m+1}$, $Q_{2n-1+4m+3}=T_{4m+3}$ for $0\leq m\leq \frac{n-3}{2}$.
Moreover, $Q_{l(4n-2)+t}=Q_t$ for $0\leq t\leq 4n-3, l \in \mathbb{N}$.

Moreover, the fact that $\Omega^{4n-2}_{A^e}(A)\cong A$ for all $n$ and $\Omega^{2n-1}_{A^e}(A)\cong {}_{\sigma}A\cong A_{\sigma}$ for even $n$ as $ A^e$-modules is noted in the same paper.

It follows from the corollary \ref{cor_scydim} that in order to compute the stable Calabi-Yau dimension we have to find the least $t\geq 0$ such that $\Omega^{t+1}_{ A^e}(A)\cong A_{\varphi}$ where $\varphi$ is a stably inner automorphism (the Nakayama automorphism is equal to identity here). It is clear that $Q_{t+1}=(Q_0)_{\varphi}\cong T_0$  and $Q_{t+2}=(Q_1)_{\varphi}\cong T_1$ in this case.
It follows that that the Calabi-Yau dimension of the algebra $A$ is equal to $4n-3$ if $n$ is odd and equal to either $2n-2$ or $4n-3$ if $n$ is even. The following lemma allows to find exact answer for even $n$.

\begin{lemma}
The automorphism $\sigma$ is not stably inner.
\end{lemma}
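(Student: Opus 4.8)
The plan is to argue by contradiction: assuming $\sigma$ were stably inner, I would derive an inconsistency from the way $\sigma$ moves the loop $\beta$. First I would record the elementary features of $\sigma$. In characteristic $2$ one checks $\sigma^2={\rm id}$, that $\sigma$ fixes every idempotent and every arrow except $\beta$, and that $\sigma(\beta)-\beta=\beta^2+\beta^3\in{\rm rad}^2(A)$; a short computation using $\beta^2=\nu_0$ and the relations even shows that $\sigma$ acts as the identity on ${\rm rad}^2(A)$. Consequently the two cheap obstructions are unavailable: Corollary \ref{cor_st_inn_loops} yields nothing (its conclusion holds with all $d_i=1$), and the restriction of $\sigma$ to the commutative corner $e_0Ae_0\cong k[\beta]/(\beta^{4})$ (here $\beta^4=\nu_0^2=0$ by the relation $\mu_1\nu_1=0$) is \emph{stably inner} by Proposition \ref{prop_k[t]/t^n}, since the nilpotency degree is so small that the congruences defining stable innerness for $k[t]/t^{4}$ are vacuous. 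This locates the real content: the loop alone is harmless, so the obstruction must be produced by the \emph{nonstandard} relation $\alpha_0\alpha_{n-1}=\alpha_0\beta\alpha_{n-1}$, the only place the geometry of the quiver departs from the standard case.

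Because $\underline{\rm Inn}(A)$ can sit strictly inside the group of automorphisms acting trivially on the stable \emph{cyclic} category, I would not try to conclude from cyclic modules alone, but would instead invoke Proposition \ref{prop_spectroids}. As $\sigma$ fixes all idempotents, ${\rm G}(\sigma)$ fixes all indecomposable projectives; fixing an extension $\Phi$ of ${\rm G}(\sigma)$ to ${\rm ind}(A)$, the goal becomes to show that the induced endofunctor $\underline{\Phi}$ of $\underline{\rm ind}(A)$ is \emph{not} isomorphic to the identity. I would run the argument used in Case~2 of the proof of Proposition \ref{prop_k[t]/t^n}. A natural isomorphism $\eta\colon{\rm Id}\to\underline{\Phi}$ would assign to each fixed indecomposable $M$ a stable automorphism $\eta_M\in\underline{\rm Aut}(M)$, and naturality with respect to the irreducible maps of the stable Auslander--Reiten quiver of $A$, together with the fact that $\Omega$ is the Serre functor, would propagate these components around the quiver. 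The effect of $\underline{\Phi}$ on the irreducible maps near the vertex $0$ is to multiply them by units differing from $1$ only in positive degree (reflecting $\sigma(\beta)=\beta+\beta^2+\beta^3$); I would track this unit through one mesh and show that its $\beta^2$-contribution survives in a \emph{commutative} stable endomorphism ring, where it cannot be absorbed by conjugation by any $\eta_M$.

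The step that decides everything, and the main obstacle, is the verification that this $\beta^2$-contribution is stably nonzero, i.e. that left multiplication by $\beta^2$ on the relevant module does not factor through a projective. This is exactly where the nonstandard relation must be used: over the corner $k[\beta]/\beta^4$ the class of $\beta^2$ \emph{does} die stably (that was the content of the first paragraph), so the non-vanishing is not a local phenomenon at vertex $0$ but a consequence of the identification $\alpha_0\alpha_{n-1}=\alpha_0\beta\alpha_{n-1}$, which forces the $\beta$-detour path to coincide with the direct path $n-1\to 1$ and thereby prevents $\beta^2$ from splitting off a projective summand. A secondary difficulty is bookkeeping: since $A$ is nonstandard, $\underline{\rm ind}(A)$ is \emph{not} the mesh category of its Auslander--Reiten quiver, so the mesh relations used to propagate the $\eta_M$ must be replaced by their twisted analogues coming from \cite{Volk4}, and the recursion carried out explicitly around the part of the quiver lying over $0,1,n-1$. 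Once the surviving $\beta^2$-term is exhibited, commutativity of the ambient stable endomorphism ring shows that no conjugation can realize $\bar\beta\mapsto\bar\beta+\bar\beta^2+\bar\beta^3$, whence $\underline{\Phi}\not\cong{\rm Id}$ and, by Proposition \ref{prop_spectroids}, $\sigma\notin\underline{\rm Inn}(A)$. Feeding this into Corollary \ref{cor_scydim} (the Nakayama automorphism being trivial here) rules out $\Omega^{2n-1}_{A^e}(A)\cong A_\varphi$ with $\varphi$ stably inner, leaving $4n-3$ as the stable Calabi--Yau dimension for even $n$ as well.
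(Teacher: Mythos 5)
Your preliminary reductions are correct as far as they go: $\beta^4=\nu_0^2=0$, so $e_0Ae_0\cong k[\beta]/\beta^4$, and indeed neither Corollary \ref{cor_st_inn_loops} nor the corner heuristic via Proposition \ref{prop_k[t]/t^n} can detect $\sigma$, so the obstruction must involve the nonstandard relation $\alpha_0\alpha_{n-1}=\alpha_0\beta\alpha_{n-1}$. But the proposal has a genuine gap, and you name it yourself: the ``step that decides everything'' --- propagating the components of a hypothetical isomorphism $\eta\colon{\rm Id}\to\underline{\Phi}$ around the Auslander--Reiten quiver and verifying that a surviving $\beta^2$-term does not factor through a projective --- is never carried out; it is only asserted that it ``would'' work. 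Moreover, the machinery you propose for carrying it out is not actually available. The propagation argument in Case 2 of Proposition \ref{prop_k[t]/t^n} is powered by an explicit presentation of $\underline{\rm ind}(A)$ as a mesh category, and for the nonstandard algebra no such presentation exists --- that is precisely what ``nonstandard'' means. Your plan to substitute ``twisted analogues'' of the mesh relations from \cite{Volk4} does not close this hole: that reference supplies the bimodule resolution of $A$, not a generators-and-relations description of $\underline{\rm ind}(A)$, so the recursion you defer to it cannot simply be imported, and without an explicit handle on $\Phi$ and on the relevant stable Hom spaces, the claim that the unit ``cannot be absorbed by conjugation'' in a commutative stable endomorphism ring remains an unproved assertion.

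For comparison, the paper avoids the spectroid route entirely and replaces it by a finite, fully explicit computation: it fixes six small modules $M_1=P_1/\langle\nu_1\rangle$, $M_2=P_0/\langle\beta^2\rangle$, $M_3=P_0/\langle\beta^2,\mu_{n-1}\beta\rangle$, $M_4=(P_0\oplus P_{n-1})/\langle\beta+\alpha_{n-1},\beta^2\rangle$, $M_5=P_0/\langle\beta\rangle$, $M_6=P_0/\langle\beta^3\rangle$, together with a chain of maps $f_1,\dots,f_5$, lifts the components of a hypothetical isomorphism ${\rm Id}\cong\underline{\rm res}_\sigma$ to honest module isomorphisms $\eta_i$ (via the analogue of Lemma \ref{horoshiy_vibor_isomorphisma}), computes each ${\rm End}_A(M_i)$ and the Hom spaces into projectives by hand, and pins down the coefficients step by step until the stable naturality square for $f_5$ --- where $\sigma(\beta)=\beta+\beta^2+\beta^3$ and the module $M_4$, built on the nonstandard identification of $\beta$ with $\alpha_{n-1}$, both enter --- forces $c_1=1$, $c_1+c_2=d_1$, $c_2=d_1$, a contradiction in characteristic $2$. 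That chain of explicit Hom computations is exactly the content your sketch postpones, so the decisive part of the lemma is missing from your proof.
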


\begin{proof} For elements $m_1,\dots,m_t$  of $A$-module $M$, we denote by $\langle m_1,\dots,m_t\rangle$ the submodule of $M$ generated by them. Let us consider the following modules: $M_1=P_1/\langle \nu_1\rangle$, $M_2=P_0/\langle \beta^2\rangle$, $M_3=P_0/\langle \beta^2,\mu_{n-1}\beta\rangle$, $M_4=(P_0\oplus P_{n-1})/\langle \beta+\alpha_{n-1},\beta^2\rangle$, $M_5=P_0/\langle \beta\rangle$ and $M_6=P_0/\langle \beta^3\rangle$. We define homomorphisms $f_i:M_i\rightarrow M_{i+1}$ ($1\leq i\leq 4$) and $f_5:M_4\rightarrow M_6$ by the following formulas:
$
f_1(e_1)=\alpha_0,$ $f_2(e_0)=e_0,$ $f_3(e_0)=e_0,$ $f_4(e_0)=e_0,$ $f_4(e_{n-1})=0,$ $f_5(e_0)=\beta,$ $f_5(e_{n-1})=\mu_{n-1}.
$
Let us assume that $\sigma$ is a stably inner automorphism. Then there is a collection of isomorphisms of $A$-modules $\eta_i:M_i\rightarrow (M_i)_{\sigma}$ such that equalities $\underline{{\rm res}}_{\sigma}\big(\underline{f_i}\big)\underline{\eta_i}=\underline{\eta_{i+1}}\underline{f_i}$ for $1\leq i\leq 4$ and the equality $\underline{{\rm res}}_{\sigma}\big(\underline{f_5}\big)\underline{\eta_4}=\underline{\eta_6}\underline{f_5}$ hold in the category $\underline{{\rm mod}}$-$A$. It is clear that $(M_i)_{\sigma}=M_i$ for $1\leq i\leq 5$ and $\underline{{\rm res}}_{\sigma}\big(\underline{f_i}\big)=\underline{f_i}$ for $1\leq i\leq 4$. Moreover, it can be shown that
$$
\begin{aligned}
&{\rm End}_A(M_1)={}_k\langle {\rm Id}_{M_1}\rangle, {\rm End}_A(M_2)={}_k\langle {\rm Id}_{M_2}, \eta_{2,1}\rangle, {\rm End}_A(M_3)={}_k\langle {\rm Id}_{M_3}, \eta_{3,1}\rangle,\\
&{\rm End}_A(M_4)={}_k\langle {\rm Id}_{M_4}, \eta_{4,1}, \eta_{4,2}\rangle, {\rm End}_A(M_5)={}_k\langle {\rm Id}_{M_5}\rangle,
\end{aligned}
$$
where $\eta_{2,1}(e_0)=\beta$, $\eta_{3,1}(e_0)=\beta$, $\eta_{4,1}(e_0)=\beta$, $\eta_{4,1}(e_{n-1})=0$, $\eta_{4,2}(e_0)=0$, $\eta_{4,2}(e_{n-1})=\mu_{n-1}$.

Then we can assume that $\eta_1={\rm Id}_{M_1}$. Suppose that $\eta_2=c_1{\rm Id}_{M_2}+c_2\eta_{2,1}$, where $c_1, c_2\in k$.
Then the map  $f_1+\eta_2f_1$ which maps $e_1$ to $(c_1+1)\alpha_0+c_2\alpha_0\beta$ must go  through the canonical projection $P_0\rightarrow M_2$. But it is easy to show that ${\rm Hom}_A(M_1,P_0)=0$, i.e. $c_1=1$, $c_2=0$.

Let $\eta_3=c_1{\rm Id}_{M_3}+c_2\eta_{3,1}$ for some $c_1, c_2\in k$. Then the map $f_2+\eta_3f_2$ which maps $e_0$ to $(c_1+1)e_0+c_2\beta$ must go through the canonical projection $\rho_3:P_0\rightarrow M_3$. It is easy to verify that ${\rm Hom}_A(M_2,P_0)={}_k\langle\theta_{2,1},\theta_{2,2}\rangle$ where $\theta_{2,1}$ and $\theta_{2,2}$ are the maps which map $e_0$ to $\beta^2$ and $\beta^3$ respectively, i.e. every map from $M_2$ to $M_3$ which goes through $\rho_3$ equals 0. Consequently, $\eta_3={\rm Id}_{M_3}$.

Let $\eta_4=c_1{\rm Id}_{M_4}+c_2\eta_{4,1}+c_3\eta_{4,2}$, $\eta_5=c{\rm Id}_{M_5}$ where $c_1, c_2, c_3, c\in k$. Then the map $f_3+\eta_4f_3$ which maps $e_0$ to $(c_1+1)e_0+c_2\beta$ must go through the  canonical projection $\rho_4:P_0\oplus P_{n-1}\rightarrow M_4$. It is easy to show that ${\rm Hom}_A(M_3,P_0\oplus P_{n-1})={}_k\langle\theta_{3,1},\theta_{3,2},\theta_{3,3}\rangle$ where $\theta_{3,1}$, $\theta_{3,2}$ and $\theta_{3,3}$ are the maps which map $e_0$ to $\beta^2$, $\beta^3$ and $\beta\alpha_{n-1}$ respectively, i.e. every map from $M_3$ to $M_4$ which goes through $\rho_4$ is equal to $0$. Hence, $\eta_4={\rm Id}_{M_4}+c_3\eta_{4,2}$, i.e. the map $f_4\eta_4+\eta_5f_4$ maps $e_0$ and $e_{n-1}$ to $(1+c)e_0$ and $c_3\mu_{n-1}$ respectively. This map must go through the canonical projection $\rho_5:P_0\rightarrow M_5$. It is easy to verify that ${\rm Hom}_A(M_4,P_0)={}_k\langle\theta_{4,1},\theta_{4,2}\rangle$ where $\theta_{4,1}(e_0)=\beta^2$, $\theta_{4,1}(e_{n-1})=\mu_{n-1}\beta$, $\theta_{4,2}(e_0)=\beta^3$, $\theta_{4,2}(e_{n-1})=0$, i.e. every map from $M_4$ to $M_5$ which can go through $\rho_5$ is equal to $0$. It follows that $\eta_4={\rm Id}_{M_4}$.

Hence, we have $\underline{f_5}=\underline{\eta_6^{-1}}\underline{{\rm res}}_{\sigma}\big(\underline{f_5}\big)$. The map $\eta_6^{-1}$ maps $e_0$ to $c_1e_0+c_2\beta+c_3\beta^2$ for some $c_1,c_2,c_3\in k$. Then $f_5+\eta_6^{-1}{\rm res}_{\sigma}(f_5)$ maps $e_0$  and $e_{n-1}$ to
$$
\begin{aligned}
\beta+\eta_6^{-1}(\beta)&=\beta+\eta_6^{-1}\big(\sigma(\beta+\beta^2)e_0\big)=\beta+(\beta+\beta^2)(c_1e_0+c_2\beta+c_3\beta^2)\\
&=(c_1+1)\beta+(c_1+c_2)\beta^2,
\end{aligned}
$$
and $(c_1+1)\mu_{n-1}+c_2\mu_{n-1}\beta$ respectively. This map must be represented as $\rho_6\theta$ where $\theta\in{\rm Hom}_A(M_4,P_0)$ and $\rho_6:P_0\rightarrow M_6$ is the canonical projection. It is mentioned above that ${\rm Hom}_A(M_4,P_0)={}_k\langle\theta_{4,1},\theta_{4,2}\rangle$, i.e. $\rho_6\theta$ maps $e_0$ and $e_{n-1}$ to $d_1\beta^2+d_2\beta^3$ and $d_1\mu_{n-1}\beta$ respectively for some $d_1, d_2\in k$. Thus, $c_1=1$, $c_1+c_2=d_1$, $c_2=d_1$. The contradiction proves the lemma.
\end{proof}

The following proposition follows from the above argument:

\begin{proposition}
Let $A$ be a nonstandard self-injective algebra of finite representation type of type $(D_{3n}, \frac{1}{3}, 1)$. Then the stable Calabi-Yau dimension of the algebra $A$ is equal to $(4n-3)$.
 \end{proposition}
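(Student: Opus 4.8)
The plan is to deduce the proposition from Corollary~\ref{cor_scydim} and the periodicity data of the bimodule resolution, using the preceding lemma only to dispose of the even case. Since $A$ is a Frobenius algebra without semisimple blocks whose Nakayama automorphism is the identity, Corollary~\ref{cor_scydim} tells us that the stable Calabi--Yau dimension is the least $t\ge 0$ for which $\Omega^{t+1}_{A^e}(A)\cong A_\varphi$ for some $\varphi\in\underline{\rm Inn}(A)$. The reduction carried out just before the lemma already narrows the possibilities: matching the first two projective terms $Q_{t+1}\cong T_0$ and $Q_{t+2}\cong T_1$ against the explicit list of the $Q_s$ shows that the dimension equals $4n-3$ when $n$ is odd, and equals one of $2n-2$ or $4n-3$ when $n$ is even. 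So nothing remains to be done in the odd case, and the whole task is to decide, for even $n$, which of the two candidates occurs.

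For even $n$ the two candidates correspond to the two distinguished isomorphisms recorded above, namely $\Omega^{2n-1}_{A^e}(A)\cong A_\sigma$ (giving $t=2n-2$) and $\Omega^{4n-2}_{A^e}(A)\cong A$ (giving $t=4n-3$). The larger value is always admissible: $A\cong A_{\mathrm{id}}$ and $\mathrm{id}\in\underline{\rm Inn}(A)$, so $\Omega^{4n-2}_{A^e}(A)\cong A_{\mathrm{id}}$ realises the Calabi--Yau property. Hence the proposition comes down to the single question of whether the smaller value $t=2n-2$ is admissible, i.e.\ whether $\Omega^{2n-1}_{A^e}(A)\cong A_\varphi$ for some stably inner $\varphi$. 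Here I would use the elementary description of isomorphisms between twisted regular bimodules: a bimodule isomorphism $A_\sigma\cong A_\varphi$ is given by left multiplication by a unit $u$ and exists exactly when $\varphi\sigma^{-1}$ is the inner automorphism $\mathrm{conj}_u$. Since inner automorphisms are stably inner and $\underline{\rm Inn}(A)$ is a group, it follows that $A_\sigma\cong A_\varphi$ for some $\varphi\in\underline{\rm Inn}(A)$ if and only if $\sigma$ itself lies in $\underline{\rm Inn}(A)$.

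Thus the genuine content is precisely the preceding lemma, that $\sigma$ is \emph{not} stably inner; granting it, the candidate $t=2n-2$ is excluded and the even case also yields $4n-3$, uniformly with the odd case, which proves the proposition. I expect the real obstacle to sit inside that lemma rather than in this assembly: checking stable innerness of $\sigma$ directly on the bimodule $A_\sigma$ is unwieldy, so the workable strategy is the one used above --- to exhibit a short chain of nonprojective modules $M_1,\dots,M_6$ together with explicit maps $f_i$, to note that any natural isomorphism $\underline{\rm res}_\sigma\cong\mathrm{Id}$ would force compatible stable isomorphisms $\underline{\eta_i}$ on them, to compute the relevant endomorphism and homomorphism spaces, and finally to extract a numerical contradiction over the characteristic-$2$ ground field on which nonstandard algebras live. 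Once the lemma is available, the argument above closes the proof with no further computation.
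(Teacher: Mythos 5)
Your proposal is correct and follows essentially the same route as the paper: the paper's proof is exactly the assembly you describe --- Corollary~\ref{cor_scydim} plus the resolution data $Q_{t+1}\cong T_0$, $Q_{t+2}\cong T_1$ narrowing the candidates to $4n-3$ ($n$ odd) or $2n-2$ versus $4n-3$ ($n$ even), with the lemma that $\sigma$ is not stably inner ruling out $2n-2$. Your only addition is to spell out the step the paper leaves implicit, namely that $A_\sigma\cong A_\varphi$ for some $\varphi\in\underline{\rm Inn}(A)$ if and only if $\sigma\in\underline{\rm Inn}(A)$ (via $\varphi\sigma^{-1}$ inner and ${\rm Inn}(A)\subseteq\underline{\rm Inn}(A)$), which is a correct and welcome clarification.
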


\end{document}